\theoremstyle{plain}
\newtheorem{theorem}{Theorem}[section]
\newtheorem{lemma}[theorem]{Lemma}
\newtheorem{cor}[theorem]{Corollary}
\newtheorem{prop}[theorem]{Proposition}
\newtheorem{conj}[theorem]{Conjecture}
\theoremstyle{remark}
\theoremstyle{definition}
\newtheorem{defn}[theorem]{Definition}
\newtheorem{Remark}[theorem]{Remark}
\numberwithin{equation}{section}
\newcommand\hfld[2]{\smash{\mathop{\hbox to 10mm{\rightarrowfill}}
     \limits^{\scriptstyle#1}_{\scriptstyle#2}}}
\newcommand\hflg[2]{\smash{\mathop{\hbox to 10mm{\leftarrowfill}}
     \limits^{\scriptstyle#1}_{\scriptstyle#2}}}
\author{Thomas J. Haines}
\begin{document}

\thanks{This research has been partially supported by NSF grant DMS-0901723.}

%\subjclass{Primary 22E50; Secondary 20G25}
\date{}

%opening
\title{On Satake parameters for representations with parahoric fixed vectors}

\maketitle

\begin{abstract}
This article, a continuation of \cite{HRo}, constructs the Satake parameter for any irreducible smooth $J$-spherical representation of a $p$-adic group, where $J$ is any parahoric subgroup. This parametrizes such representations when $J$ is a special maximal parahoric subgroup. The main novelty is for groups which are not quasi-split, and the construction should play a role in formulating a geometric Satake isomorphism for such groups over local function fields.
\end{abstract}

\section{Introduction}

Let $F$ be a nonarchimedean local field and let $W_F$ denote its Weil group, with $I_F$ its inertia subgroup and $\Phi \in W_F$ a choice of a geometric Frobenius element. Let $G$ be a connected reductive group over $F$, with complex dual group $\widehat{G}$. Let $J \subset G(F)$ be a parahoric subgroup, and let $\Pi(G/F, J)$ denote the set of isomorphism classes of smooth irreducible representations $\pi$ of $G(F)$ such that $\pi^J \neq 0$.  To $\pi \in \Pi(G/F, J)$ we will associate a {\em Satake parameter} $s(\pi)$ belonging to the variety $[\widehat{G}^{I_F} \rtimes \Phi]_{\rm ss}/ \widehat{G}^{I_F}$, where the quotient is formed using the conjugation action of $\widehat{G}^{I_F}$ on the set of semisimple elements in the coset $\widehat{G}^{I_F} \rtimes \Phi$. More precisely, we will prove the following theorem.

\begin{theorem} \label{main_thm}
There is an explicit closed subvariety $S(G) \subseteq [\widehat{G}^{I_F} \rtimes \Phi]_{\rm ss}/\widehat{G}^{I_F}$ and a canonical map $s: \Pi(G/F, J) \rightarrow S(G)$ with the following properties:
\begin{enumerate}
\item[(A)] If $J = K$ is a special maximal parahoric subgroup, the map $\pi \mapsto s(\pi)$ gives a {\em parametrization}
$$
\Pi(G/F, K) ~ \widetilde{\rightarrow} ~ S(G).
$$
\item[(B)] $S(G) =  [\widehat{G}^{I_F} \rtimes \Phi]_{\rm ss}/\widehat{G}^{I_F}$ if and only if $G/F$ is quasi-split.
\item[(C)] The parameter $s(\pi)$ predicts part of the local Langlands parameter $\varphi_\pi$ that is conjecturally attached to $\pi$: $\varphi_\pi(\Phi) = s(\pi)$ in $[\widehat{G} \rtimes \Phi]_{\rm ss}/\widehat{G}$ \textup{(}Conjecture \ref{LLC_relation}\textup{)}.
\end{enumerate}
\end{theorem}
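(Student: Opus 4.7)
The plan is to construct $s(\pi)$ by reducing to Iwahori-spherical representations, then routing through the minimal Levi. Since every parahoric $J$ contains (up to conjugacy) an Iwahori $I$, and $\pi^J \subseteq \pi^I$, it suffices to construct $s(\pi)$ on $\Pi(G/F, I)$ and verify it is compatible with the inclusions $\Pi(G/F, J) \hookrightarrow \Pi(G/F, I)$ for each parahoric $J$. A generalized Borel--Casselman--Matsumoto theorem (presumably extending [HRo] to the non-quasi-split setting) identifies $\Pi(G/F, I)$ with the set of irreducible subquotients of $i^G_{P_0}(\chi)$, where $P_0$ is a minimal parabolic with Levi $M_0$ and $\chi$ ranges over unramified characters of $M_0(F)$; the pair $(M_0, \chi)$ is determined by $\pi$ up to the relative Weyl group action and up to twists by unramified characters of $M_0^{\rm der}$, so the orbit of $\chi$ is a well-defined invariant.

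To define $s(\pi)$, I would use Kottwitz's identification of the group of unramified characters of $M_0(F)$ with the group of $\Phi$-semisimple conjugacy classes in $\widehat{M_0}^{I_F}$. Composing with the natural closed embedding $[\widehat{M_0}^{I_F} \rtimes \Phi]_{\rm ss}/\widehat{M_0}^{I_F} \hookrightarrow [\widehat{G}^{I_F} \rtimes \Phi]_{\rm ss}/\widehat{G}^{I_F}$ (well defined since $\widehat{M_0}$ is a Levi of $\widehat{G}$ stable under $I_F$), I obtain the map $\pi \mapsto s(\pi)$. I then define
\[
S(G) \;:=\; \text{image of } [\widehat{M_0}^{I_F} \rtimes \Phi]_{\rm ss}/\widehat{M_0}^{I_F} \to [\widehat{G}^{I_F} \rtimes \Phi]_{\rm ss}/\widehat{G}^{I_F}.
\]
For (B), the quasi-split case is exactly the case in which $M_0$ is a minimal $F$-Levi that is a torus, so Steinberg's theorem on semisimple classes in $\widehat{G}^{I_F} \rtimes \Phi$ implies surjectivity; conversely, if $G$ is not quasi-split, $M_0$ has nontrivial derived group and $\widehat{M_0}^{I_F}$ is a proper Levi subgroup whose image of conjugacy classes is a proper closed subvariety.

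For (A), when $J = K$ is special maximal parahoric, the Satake isomorphism (extended to non-quasi-split $G$, which is where the work really happens) identifies the commutative Hecke algebra $\mathcal{H}(G(F), K)$ with the coordinate ring of $S(G)$; since $\pi^K$ is then a one-dimensional module over this algebra it is given by a character, and this character is exactly the evaluation at $s(\pi)$. Injectivity comes from Bernstein decomposition together with the classification by $(M_0, \chi)$-data, and surjectivity from the fact that every point of $S(G)$ arises from an induced representation $i^G_{P_0}(\chi)$ whose unique $K$-spherical irreducible constituent realizes it. Property (C) follows by checking compatibility with parabolic induction in the expected local Langlands correspondence: the parameter of $\pi \subset i^G_{P_0}(\chi)$ is induced from the parameter of $\chi$, which under Kottwitz's dictionary matches $s(\pi)$.

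The main obstacle is the analysis in Step~(A) together with the careful identification in Step~2: the minimal Levi $M_0$ of a non-quasi-split group is anisotropic modulo center but typically not a torus, so one must show that unramified characters of $M_0(F)$ really do correspond to $\Phi$-conjugacy classes in $\widehat{M_0}^{I_F}$ (a non-trivial dualization requiring the Kottwitz isomorphism on $M_0$), and that this correspondence is intertwined with parabolic induction in a way that recovers Satake. Verifying that $S(G)$ is indeed a closed subvariety and that the map $\pi \mapsto s(\pi)$ descends correctly through the Weyl group action are secondary but still technical points best handled by appealing to the geometry of the Vinberg monoid or of Kottwitz's classifying variety.
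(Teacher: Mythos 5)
Your high-level outline (pass through the supercuspidal support on a minimal Levi, apply Kottwitz duality, push into semisimple conjugacy classes in the coset $\widehat{G}^{I_F}\rtimes\Phi$) is in the right spirit, but two of the key steps are incorrect, and as written your construction would actually contradict part~(B).

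First, weakly unramified characters of $M_0(F)$ are identified via the Kottwitz isomorphism with $(Z(\widehat{M_0})^{I_F})_\Phi$, i.e.\ with the \emph{center} of $\widehat{M_0}^{I_F}$, not with all $\Phi$-semisimple conjugacy classes in $\widehat{M_0}^{I_F}\rtimes\Phi$. When $G$ is not quasi-split, $M_0$ is anisotropic mod center but far from a torus, and the gap between $Z(\widehat{M_0})^{I_F}$ and $\widehat{T}^{I_F}$ is precisely what makes $S(G)$ a proper subvariety. If one really identified unramified characters with all semisimple classes in $\widehat{M_0}^{I_F}\rtimes\Phi$, then the induced map into $\widehat{G}^{I_F}$-classes would be \emph{surjective} (both groups share the same maximal torus $\widehat{T}^{I_F}$, and by Lemma~\ref{tau_conj_lem} their semisimple classes in the $\Phi$-coset are $\widehat{T}^{I_F}/W$ for the respective Weyl groups), contradicting~(B). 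For the same reason, the map from Levi semisimple classes to $\widehat{G}^{I_F}$-classes is a finite surjection, not a ``natural closed embedding'': the closed-immersion statement you need is for $(Z(\widehat{M})^{I_F})_\Phi/W(G,A)\hookrightarrow (\widehat{T^*}^{I^*_F})_{\Phi^*}/W(G^*,A^*)$, and this is nontrivial (Lemma~\ref{immersion_lem}).

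Second, the passage from $(Z(\widehat{M_0})^{I_F})_\Phi$ into $\widehat{G}^{I_F}$-classes cannot be the naive inclusion: it must be normalized by the ratio of modulus characters $\delta_{B^*}^{-1/2}\hat\psi_0(\delta_P^{1/2})=\delta_{B^*_{M^*}}^{-1/2}$. Without this Steinberg shift the map is not independent of the auxiliary choices (it is the normalized transfer $\tilde t_{A^*,A}$ of Lemma~\ref{norm_trans_lemma} that is canonical), it does not restrict to the classical Satake parametrization on the quasi-split locus, and the explicit description of Theorem~\ref{explicit_S(G)}(iii) --- Satake parameters of weakly unramified twists of the Steinberg of $M^*$ --- would be false. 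Routing through a quasi-split inner form $G^*$, using Proposition~\ref{2nd_eq_bijective} to identify $(\widehat{T^*}^{I^*_F})_{\Phi^*}/W(G^*,A^*)$ with $[\widehat{G^*}^{I^*_F}\rtimes\Phi^*]_{\rm ss}/\widehat{G^*}^{I^*_F}$, and then identifying $^LG^*=\,^LG$ as in Remark~\ref{dual_innertwist_rem} is what makes the target a genuine variety of $\widehat{G}^{I_F}$-conjugacy classes; your construction has no analogue of this step. Finally, your argument for~(B) (``$\widehat{M_0}^{I_F}$ is a proper Levi, hence its image of conjugacy classes is a proper subvariety'') fails for the reason above; the correct argument is a dimension count comparing $(Z(\widehat{M})^{I_F})_\Phi$ with $(\widehat{T^*}^{I^*_F})_{\Phi^*}$, as in Lemma~\ref{dim_lem}, which shows the two dimensions agree exactly when $M$ is a torus, i.e.\ when $G$ is quasi-split.
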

%The set $S(G)$ is defined using the {\em normalized transfer homomorphisms} $\tilde{t}_{A^*,A}$ introduced in \cite[$\S11$]{stable}.

The evidence for (C) is contained in the following result, which we prove in $\S\ref{rel_LLC_sec}$, under the assumption that inner forms of ${\rm GL}_n$ satisfy the enhancement LLC+ of the local Langlands correspondence (see \cite[$\S5.2$]{stable}).

\begin{theorem} \label{2nd_main_thm}
Conjecture \ref{LLC_relation} holds if $G$ is any inner form of ${\rm GL}_n$.
\end{theorem}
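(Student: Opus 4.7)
The plan is to transfer the problem from the inner form $G$ to its quasi-split form $G^* = \mathrm{GL}_n$ via the Jacquet--Langlands correspondence, reducing to the known compatibility between the Satake parameter and LLC for $\mathrm{GL}_n$. Write $G = \mathrm{GL}_m(D)$ where $D/F$ is a central division algebra of degree $d$ and $md = n$. For any $\pi \in \Pi(G/F, J)$, I would invoke the Jacquet--Langlands transfer (Deligne--Kazhdan--Vign\'eras for essentially square-integrable representations, extended by Badulescu and by M\'inguez--S\'echerre to arbitrary irreducibles) to produce a representation $\mathrm{JL}(\pi)$ of $\mathrm{GL}_n(F)$. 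Under the LLC+ hypothesis, $\pi$ and $\mathrm{JL}(\pi)$ share the same Langlands parameter, so $\varphi_\pi(\Phi) = \varphi_{\mathrm{JL}(\pi)}(\Phi)$ under the canonical identification $\widehat{G} = \widehat{\mathrm{GL}_n}$.

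For the quasi-split group $\mathrm{GL}_n$, part (B) of Theorem \ref{main_thm} gives $S(\mathrm{GL}_n) = [\widehat{\mathrm{GL}_n} \rtimes \Phi]_{\rm ss}/\widehat{\mathrm{GL}_n}$, and the identity $s(\mathrm{JL}(\pi)) = \varphi_{\mathrm{JL}(\pi)}(\Phi)$ is the standard compatibility of the Satake parameter with LLC for $\mathrm{GL}_n$. In the hyperspecial case this is classical; for arbitrary parahoric $J$ it follows by reducing to the inertial cuspidal support, on which $s$ depends by construction. The crux of the theorem is thus to prove that $s(\pi) = s(\mathrm{JL}(\pi))$ in $[\widehat{G} \rtimes \Phi]_{\rm ss}/\widehat{G}$.

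To establish this last equality I would unwind the recipe for $s(\pi)$ from the preceding sections: every Levi of $G$ has the form $\prod_i \mathrm{GL}_{m_i}(D)$, and JL sends supercuspidal representations of such Levis to essentially square-integrable representations on the corresponding Levi $\prod_i \mathrm{GL}_{m_i d}(F) \subset \mathrm{GL}_n(F)$ with matching Langlands parameters. The comparison of $s(\pi)$ with $s(\mathrm{JL}(\pi))$ then reduces, inductively, to a supercuspidal-support match combined with the normalization data (Kottwitz maps, parahoric structures) that enters the construction of $s$.

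The main obstacle is precisely this normalization step: $G$ and $\mathrm{GL}_n$ share the same dual group but have distinct Kottwitz homomorphisms and inequivalent parahoric structures, and the definition of $s(\pi)$ uses both. I expect this to be tractable because $\kappa_G$ for $G = \mathrm{GL}_m(D)$ can be described explicitly as a rescaled reduced norm, compatible with $\det$ on $\mathrm{GL}_n$ under the inner twist, and because parahoric subgroups of $G$ correspond, via the Morita equivalence over an unramified splitting field, to parahoric subgroups of $\mathrm{GL}_n$ in a JL-compatible way. Once these bookkeeping compatibilities are tracked through the construction of $s$, the theorem follows.
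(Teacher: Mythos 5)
Your proposal identifies the right ingredients (LLC+, Jacquet--Langlands, the normalized transfer data), but the way you string them together contains a genuine gap. The problem is your use of a full Jacquet--Langlands transfer $\mathrm{JL}(\pi)$ for an arbitrary parahoric-spherical $\pi$, together with the assertion that ``under the LLC+ hypothesis, $\pi$ and $\mathrm{JL}(\pi)$ share the same Langlands parameter.'' Neither the existence nor the parameter-preservation is what LLC+ gives you. LLC+ is the statement that infinitesimal characters are compatible with normalized parabolic induction across the Levi lattice of a single group $G$; it says nothing about a transfer to $\mathrm{GL}_n$. The classical JL bijection (Deligne--Kazhdan--Vign\'eras, Badulescu) preserves Langlands parameters only on the discrete series; the extensions you cite (Badulescu's $|\mathrm{LJ}|$, M\'inguez--S\'echerre) are not parameter-preserving bijections on all irreducibles, can vanish or land in virtual representations, and in any case are not deducible from LLC+. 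So the first reduction in your argument is unfounded as stated, and the ``crux'' you correctly isolate, $s(\pi) = s(\mathrm{JL}(\pi))$, is left as a bookkeeping sketch --- but that bookkeeping is in fact where the entire content of the theorem lies, and is not addressed.

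The paper avoids all of this by never applying JL to $\pi$ itself. Instead it works with the supercuspidal support of $\pi$, which for $G = \mathrm{GL}_m(D)$ is a weakly unramified character $\chi = \chi_1 \boxtimes \cdots \boxtimes \chi_r$ on a minimal Levi $M = \prod_i \mathrm{GL}_{m_i}(D)$. LLC+ for $G$ reduces $\varphi_\pi(\Phi)$ to $\prod_i \varphi_{\chi_i}(\Phi)$. Then compatibility of LLC with unramified twists peels off the $\chi_i$ and reduces to computing $\varphi_{{\bf 1}_i}(\Phi)$ for the trivial character ${\bf 1}_i$ of each $\mathrm{GL}_{m_i}(D)$. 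At that point LLC+ is invoked a second time, now to replace ${\bf 1}_i$ by the Steinberg representation ${\bf St}_i$ (they share supercuspidal support, hence infinitesimal character), and only here does JL enter --- in the one situation where it is unambiguous and provably parameter-preserving: the discrete series $\mathbf{St}_i \mapsto \mathbf{St}^*_i$. The explicit value $\varphi_{{\bf St}^*_i}(\Phi) = \delta_{B^*_i}^{-1/2} \rtimes \Phi$ then closes the computation, and the identity $\prod_i \delta^{-1/2}_{B^*_i} = \delta^{-1/2}_{B^*} \hat{\psi}_0(\delta_P^{1/2})$ matches this with the definition of $s(\pi)$ via $\tilde{t}_{A^*,A}$. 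If you want to rescue your approach, the fix is essentially to specialize your use of JL from $\pi$ down to the Steinberg representations of the Levi factors of the supercuspidal support --- which is exactly what the paper does.
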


The map $\pi \mapsto s(\pi)$ is constructed as follows: to $\pi$ we associate its supercuspidal support, which by \cite[$\S11.5$]{stable} is a cuspidal pair $(M, \chi)_G$ with $M = {\rm Cent}_G(A)$ a minimal $F$-Levi subgroup of $G$ and $\chi \in X^{\rm w}(M) = {\rm Hom}_{\rm grp}(M(F)/M(F)_1, \mathbb C^\times)$ a weakly unramified character on $M(F)$ (in the terminology of \cite[3.3.1]{stable}). Here $M(F)_1$ is the kernel of the Kottwitz homomorphism \cite[$\S7$]{Ko97}, the theory of which gives an isomorphism
\begin{equation} \label{Kottwitz_isom}
\kappa_M: M(F)/M(F)_1 ~ \widetilde{\rightarrow} ~ X^*(Z(\widehat{M})^{I_F}_{\Phi}).
\end{equation}
Recalling that $\chi$ is determined by $\pi$ up to conjugation by the relative Weyl group $W(G, A)$, we can view the supercuspidal support of $\pi$ as an element in the complex affine variety $(Z(\widehat{M})^{I_F})_\Phi/W(G,A)$. Thus $\pi \mapsto \chi$ gives a map
\begin{equation} \label{map_1}
\Pi(G/F, J) \rightarrow (Z(\widehat{M})^{I_F})_{\Phi}/W(G, A).
\end{equation}
On the other hand, if $(G, \Psi)$ is an $F$-inner form of a quasi-split group $G^*$, and if $A^* \subset T^* \subset G^*$ are data parallel to $A \subset M \subset G$, then the theory of the normalized transfer homomorphisms $\tilde{t}_{A^*, A}$ from $\S\ref{trans_sec}$ together with the material in $\S\ref{param_space_sec5}, \ref{param_space_sec6}$ gives rise to a canonical closed immersion
\begin{equation} \label{map_2}
\xymatrixcolsep{4pc}\xymatrix{
(Z(\widehat{M})^{I_F})_{\Phi}/W(G,A) \ar@{^{(}->}[r]^{(\ref{key_defn})}  & [\widehat{G}^{I_F} \rtimes \Phi]_{\rm ss}/\widehat{G}^{I_F}.}
\end{equation}
As explained in $\S\ref{param_const_gen}$, the composition $s$ of (\ref{map_1}) with (\ref{map_2}) is completely canonical (independent of the choice of $A$), and $S(G)$ is defined to be its image. 

The following result gives two more conceptual descriptions of $S(G)$. Let $\mathcal N(\widehat{G}^{I_F})$ denote the set of nilpotent elements in ${\rm Lie}(\widehat{G}^{I_F})$. We call $(\hat{g} \rtimes \Phi, x) \in [\widehat{G}^{I_F} \rtimes \Phi]_{\rm ss} \times \mathcal N(\widehat{G}^{I_F})$ a {\em regular} $\Phi$-{\em admissible pair} if 
\begin{itemize}
\item ${\rm Ad}(\hat{g} \rtimes \Phi)(x) = q_F \, x$, where $q_F$ is the cardinality of the residue field of $F$;
\item $\Phi(x) = x$;
\item $x$ is a {\em principal} nilpotent element in ${\rm Lie}(\widehat{G}^{I_F})$.
\end{itemize}
Denote the set of such pairs by ${\mathcal P}_{\rm reg}^\Phi(\widehat{G}^{I_F})$. 

\begin{theorem} \label{explicit_S(G)} Let $M = {\rm Cent}_G(A)$ be any minimal $F$-Levi subgroup of $G$, and let $\widehat{M} \subset \widehat{G}$ be a corresponding $W_F$-stable Levi subgroup of the complex dual group $\widehat{G}$.  The following are equivalent for an element $\hat{g} \rtimes \Phi \in [\widehat{G}^{I_F} \rtimes \Phi]_{\rm ss}/\widehat{G}^{I_F}$:
\begin{enumerate} 
\item[(i)] $\hat{g} \rtimes \Phi \in S(G)$;
\item[(ii)] $\hat{g} \rtimes \Phi$ is $\widehat{G}^{I_F}$-conjugate to the first coordinate of a pair in ${\mathcal P}_{\rm reg}^\Phi(\widehat{M}^{I_F})$;
\item[(iii)] $\hat{g} \rtimes \Phi$ is $\widehat{G}^{I_F}$-conjugate to the image under the natural map
$$
[\widehat{M^*}^{I_F} \rtimes \Phi]_{\rm ss}/\widehat{M^*}^{I_F} = [\widehat{M}^{I_F} \rtimes \Phi]_{\rm ss}/\widehat{M}^{I_F} \rightarrow [\widehat{G}^{I_F} \rtimes \Phi]_{\rm ss}/\widehat{G}^{I_F}$$ of the Satake parameter $s(\sigma^*)$ of some weakly unramified twist $\sigma^*$ of the Steinberg representation for $M^*$. Here $M^*$ is a quasi-split $F$-inner form of $M$ \textup{(}see Remark \ref{dual_innertwist_rem}\textup{)}.
\end{enumerate}
\end{theorem}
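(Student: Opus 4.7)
My strategy is to show that each of (i), (ii), (iii) characterizes the same subset $\Sigma \subset [\widehat G^{I_F}\rtimes\Phi]_{\rm ss}/\widehat G^{I_F}$: the $\widehat G^{I_F}$-conjugacy classes of elements $(q^{\rho^\vee} z)\rtimes\Phi$ with $z \in (Z(\widehat M)^{I_F})_\Phi$, where $\rho^\vee$ is the half-sum of positive coroots of $\widehat M^{I_F,\circ}$ relative to a $\Phi$-stable Borel. (I identify $\widehat M = \widehat{M^*}$ as reductive groups with $W_F$-action, valid since $M$ and $M^*$ are $F$-inner forms; the shift $\rho^\vee$ is trivial exactly when $M^*$ is a torus, i.e.\ when $G$ is quasi-split.)

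\textbf{(iii) $\Leftrightarrow \Sigma$.} For $\sigma^*={\rm St}(M^*)\otimes\psi$, the supercuspidal support is the pair $(T^*,\delta_{B^*}^{1/2}\cdot\psi|_{T^*})_{M^*}$, where $T^*$ is a maximal $F$-torus of $M^*$ (the minimal Levi, since $M^*$ is quasi-split) and $B^*\supset T^*$ is a Borel of $M^*$. Under (\ref{Kottwitz_isom}) for $T^*$, this weakly unramified character corresponds to $q^{\rho^\vee}\cdot z_\psi \in (\widehat{T^*}^{I_F})_\Phi$, where $z_\psi\in(Z(\widehat M)^{I_F})_\Phi$ is the Kottwitz image of $\psi$. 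Applying (\ref{map_2}) for $M^*$ yields $s(\sigma^*)=(q^{\rho^\vee}z_\psi)\rtimes\Phi$; passing to $\widehat G^{I_F}$-conjugacy places this in $\Sigma$, and every element of $\Sigma$ arises this way as $\psi$ varies.

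\textbf{(ii) $\Leftrightarrow \Sigma$.} The quasi-split structure of $M^*$ furnishes $\widehat M^{I_F,\circ}$ with a $\Phi$-stable pinning, and hence a $\Phi$-fixed principal nilpotent $x\in{\rm Lie}(\widehat M^{I_F,\circ})$, unique up to $\widehat M^{I_F,\circ}$-conjugation. A Jacobson--Morozov--Steinberg argument in the $\Phi$-twisted setting, applied to the identity component, shows that the $\widehat M^{I_F}$-conjugacy classes of semisimple $\hat g\rtimes\Phi$ (with $\hat g \in \widehat M^{I_F}$) satisfying ${\rm Ad}(\hat g\rtimes\Phi)(x)=q\,x$ are parametrized by $(Z(\widehat M)^{I_F})_\Phi$ via $z\mapsto(q^{\rho^\vee}z)\rtimes\Phi$. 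Passing to $\widehat G^{I_F}$-conjugacy matches $\Sigma$.

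\textbf{(i) $\Leftrightarrow \Sigma$; main obstacle.} By the construction preceding the theorem, $S(G)$ is the image of the closed immersion (\ref{map_2}). The key step is to show that, after identifying $\widehat M=\widehat{M^*}$ through the normalized transfer $\tilde{t}_{A^*,A}$ of \S\ref{trans_sec}, the map (\ref{map_2}) factors as the $\rho^\vee$-shift $z\mapsto(q^{\rho^\vee}z)\rtimes\Phi$ into $[\widehat M^{I_F}\rtimes\Phi]_{\rm ss}/\widehat M^{I_F}$, followed by the natural map to the target $[\widehat G^{I_F}\rtimes\Phi]_{\rm ss}/\widehat G^{I_F}$. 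This $\rho^\vee$-shift description of (\ref{map_2}), which I expect to extract from \S\S\ref{param_space_sec5}--\ref{param_space_sec6}, is the main technical obstacle, since it unifies (i), (ii), (iii) under a single formula. A secondary subtlety, arising whenever $\widehat M^{I_F}$ is disconnected (so $Z(\widehat M)^{I_F}\subsetneq Z(\widehat M^{I_F})$), is to verify that the class $[z]$ produced by Jacobson--Morozov--Steinberg in (ii) lies in the smaller group $(Z(\widehat M)^{I_F})_\Phi$; this is handled by working on the identity component with the $\Phi$-stable pinning and exploiting the $W_F$-equivariance built into the definition of $\widehat M$.
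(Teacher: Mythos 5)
Your overall organization — showing all three characterizations cut out a common set $\Sigma$ of conjugacy classes of the form $(q^{\rho^\vee}z)\rtimes\Phi$ with $z\in(Z(\widehat M)^{I_F})_\Phi$ — is essentially the same as the paper's route, and your reading of (iii) via the supercuspidal support of twists of Steinberg matches the paper's use of (\ref{delta_identity}) and Lemma \ref{Ad_lem}. You have also correctly identified where the translation between the Satake-parameter formalism and the $\rho^\vee$-shift lives (it is essentially (\ref{delta_identity}) together with the Kottwitz isomorphism).

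However, there is a genuine gap in your treatment of (ii). You write that ``a Jacobson--Morozov--Steinberg argument in the $\Phi$-twisted setting, applied to the identity component'' parametrizes the semisimple $\hat g\rtimes\Phi$ with ${\rm Ad}(\hat g\rtimes\Phi)(x)=q\,x$ by $(Z(\widehat M)^{I_F})_\Phi$ via $z\mapsto (q^{\rho^\vee}z)\rtimes\Phi$. This is precisely what must be proved, and it is the heart of the whole theorem. In the paper this is Lemma \ref{nilpotent_lem}: one starts with $(g\rtimes\tau, e)\in \mathcal P^\tau_{\rm reg}(H)$, shows $g\in B_e$ via $C_{H^\circ}(e)\subseteq B_e$, writes $g = u^{-1}t\,\tau(u)$ by a (twisted) Bruhat-type decomposition in the Borel $B_e$, and then — crucially — shows $u$ centralizes $e$. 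That last step is not soft: one expands ${\rm Ad}(u)(e)$ and ${\rm Ad}(\tau(u))(e)$ in the height-graded pieces of ${\rm Lie}(H^\circ)$, uses that $\tau$ preserves heights, and derives an identity $[\tau][\lambda^h a_\beta] = [\lambda a_\beta]$ in each height $h\geq 2$, from which $a_\beta=0$ follows only because $\lambda = q_F$ is not a root of unity. Your proposal nowhere invokes this hypothesis, so the argument as sketched would not distinguish the desired torus element from one with a unipotent contamination; without it the claimed parametrization fails. You should state and prove (or cite) a lemma of this form explicitly.

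A second, related point: you call the disconnectedness of $\widehat M^{I_F}$ a ``secondary subtlety,'' but the passage from $z\in Z(\widehat M^{I_F})$ (what Lemma \ref{nilpotent_lem} produces) to $z \in Z(\widehat M)^{I_F}$ (what the parameter space $(Z(\widehat M)^{I_F})_\Phi$ requires) is a genuine step. The paper does it via Lemma \ref{Z->>pi_0}(ii), which rests on the structural results of Proposition \ref{Steinberg_style_prop} about fixed-point subgroups (specifically that $\widehat G^{I_F}=Z(\widehat G)^{I_F}\cdot\widehat G^{I_F,\circ}$ after a connectedness check on $(\widehat T_{\rm ad})^I$, using that $T_{\rm sc}$ is $I$-induced). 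Appealing to ``$W_F$-equivariance built into the definition of $\widehat M$'' does not supply this; you need the explicit surjection $Z(\widehat M)^{I_F}\twoheadrightarrow \pi_0(\widehat M^{I_F})$, which in turn depends on connectedness of the fixed points of $I$ in the adjoint torus. Please make both of these ingredients explicit; as written the proof is incomplete at exactly the two places where the paper does the real work.
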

In formulating (ii) we used implicitly that $\widehat{G}^{I_F}$ and $\widehat{M}^{I_F}$ are reductive groups and that $\widehat{G}^{I_F} = Z(\widehat{G})^{I_F} \, \widehat{G}^{I_F, \circ}$; see $\S\S \ref{fixed-pt_sec}, \ref{param_space_sec5}$.

Let us consider some special cases and history. The most important case is where $J = K$ is a special maximal parahoric subgroup. If $G/F$ is unramified, then such a $K$ is automatically a special {\em maximal compact} subgroup (cf.~\cite{HRo}), and  $S(G) = [\widehat{G} \rtimes \Phi]_{\rm ss}/\widehat{G}$, and the parametrization in (A) is classical (cf.~\cite{Bor}). If $G/F$ is only quasi-split and tamely ramified (i.e.~split over a tamely ramified extension of $F$), then the parametrization in (A) was proved by M.~Mishra \cite{Mis} and some similar results were also obtained by X.~Zhu \cite{Zhu}.

The same ideas show how to construct the $s$-parameter in the hypothetical Deligne-Langlands triple $(s,u, \rho)$ one could hope to associate to parahoric-spherical representations of general connected reductive groups; see $\S\ref{rel_LLC_sec}$, where item (C) is also explained. It should be stressed that throughout this article,  ``parahoric'' should be understood in the sense of Bruhat-Tits \cite{BT2}, as the $\mathcal O_F$-points of a {\em connected} group scheme over $\mathcal O_F$. So for example an Iwahori subgroup here is somewhat smaller than the ``naive'' notion that sometimes appears in the literature under the same name, and therefore the Iwahori-Hecke algebra and its center are slightly larger (cf. \cite{Corr} and \cite[Appendix]{stable}). 

It was clear that some kind of parametrization like that in (A) should hold, after the author and S.~Rostami proved in \cite{HRo} the general form of the Satake isomorphism
\begin{equation} \label{Sat_isom_intro}
\mathcal H(G(F), K)  \cong \mathbb C[ (Z(\widehat{M})^{I_F})_\Phi/W(G,A)].
\end{equation}
It was also clear at that time that this isomorphism is the right one to ``categorify'', in other words it should be the function-theoretic shadow of a geometric Satake isomorphism \`{a} la \cite{MV} for $G/F$ and $K$, once such an isomorphism is properly formulated (of course here we assume $F = \mathbb F_q(\!(t)\!)$). In the meantime, progress in exactly this direction has been made: X.~Zhu \cite{Zhu} proved a geometric Satake isomorphism extending (\ref{Sat_isom_intro}) for quasi-split and tamely ramified $G$ (and {\em very special $K$}, in Zhu's terminology). This was recently generalized by T.~Richarz \cite{Ri}, who effectively removed the ``tamely ramified'' hypothesis from Zhu's result, while still assuming $G$ is quasi-split and $K$ is very special.

One obstacle to formulating a geometric Satake isomorphism when $G/F$ is not quasi-split is the lack of a suitable link between the right hand side of (\ref{Sat_isom_intro}) and the $L$-group $^LG := \widehat{G} \rtimes W_F$. We are proposing that (\ref{map_2}) provides the sought-after link. In order to fully justify this idea, it would be important to establish a suitable ``categorification''  of the normalized transfer homomorphisms, of the subvariety $S(G)$, and of the closed immersion (\ref{map_2}). The author hopes to return to these matters in future work.

\medskip

Here is an outline of the contents of this article. In $\S\ref{notation_sec}$ we recall some notation that is used throughout the paper. In $\S3$ we recall the parametrization of $\Pi(G/F, K)$ that is a consequence of (\ref{Sat_isom_intro}) and other results from \cite{HRo}. The purpose of $\S\ref{fixed-pt_sec}$ is to lay some groundwork needed in order to prove properties of $\widehat{G}^{I_F}$ (e.g.~it is reductive; analysis of its group of connected components) which are needed in $\S\S \ref{param_space_sec5}, \ref{param_space_sec6},  \ref{2nd_param_sec}$ on the parameter space $[\widehat{G}^{I_F} \rtimes \Phi]_{\rm ss}/\widehat{G}^{I_F}$. Those sections handle the construction of $\pi \mapsto s(\pi)$ when $G/F$ is quasi-split. Section $\S\ref{trans_sec}$ provides the key ingredients (transfer homomorphisms, etc.) needed to extend the construction to the general case, which is done in $\S\S \ref{param_const_gen}, \ref{2nd_param_gen_sec}$. Theorem \ref{main_thm} parts (A) and (B) are proved in $\S\ref{2nd_param_gen_sec}$. We prove Theorem \ref{explicit_S(G)} in $\S \ref{pf_exp_S(G)_sec}$, relying on the key Lemma \ref{nilpotent_lem} proved at the end of $\S$\ref{fixed-pt_sec}. Finally, in $\S\S \ref{transfer_map_sec}, \ref{rel_LLC_sec}$ we explain the connection of the Satake parameters to the (conjectural) local Langlands and Jacquet-Langlands correspondences, and also justify (C) by proving Theorem \ref{2nd_main_thm}.

\section{Notation and conventions} \label{notation_sec}

We denote the absolute Galois group of $F$ by $\Gamma := {\rm Gal}(F^s/F)$, where $F^s$ is some separable closure of $F$, fixed once and for all.

If $G$ is any connected reductive group over a nonarchimedean field $F$, and if $J \subset G(F)$ is any compact open subgroup, then $\mathcal H(G(F), J) := C_c(J\backslash G(F)/J)$, a $\mathbb C$-algebra when endowed with the convolution $*$ defined by using the Haar measure on $G(F)$ which gives $J$ volume 1.  We write $\mathcal Z(G(F), J)$ for the center of $\mathcal H(G(F), J)$.

For any $F$-Levi subgroup $M$ and $F$-parabolic subgroup $P$ with unipotent radical $N$ and Levi decomposition $P = MN$, we define for $m \in M(F)$ the usual modulus function 
$$\delta_P(m) := |{\rm det}({\rm Ad}(m); {\rm Lie}\, N(F))|_F,$$ where $| \cdot |_F$ is the normalized absolute value on $F$. Then for any admissible representation $\sigma$ of $M(F)$, we set $i^G_P(\sigma) := {\rm Ind}_{P(F)}^{G(F)}(\sigma \otimes \delta_P^{1/2})$ where ${\rm Ind}^?_?(?)$ denotes usual (unnormalized) induction.

We use $^xY$ to denote $x Y x^{-1}$ for $x$ an element and $Y$ a subset of some group. If $f$ is a function on that group, $^xf$ will be the function $y \mapsto f(x^{-1}yx)$. 
 
We will use Kottwitz' conventions on dual groups $\widehat{G}$ and their $\Gamma$-actions, see  \cite[$\S1$]{cusptemp}. 

\section{First parametrization of $K$-spherical representations} \label{1st_param_sec}

Fix a special maximal parahoric subgroup $K \subset G(F)$. In $G$, choose any maximal $F$-split torus $A$ whose associated apartment in the Bruhat-Tits building $\mathcal B(G_{\rm ad}, F)$ contains the special vertex associated to $K$.  Let $M := {\rm Cent}_G(A)$  be the centralizer of $A$, a minimal $F$-Levi subgroup. Following \cite{stable}, we call the group of homomorphisms $M(F)/M(F)_1 \rightarrow \mathbb C^\times$ the group $X^{\rm w}(M)$ of {\em weakly unramified} characters on $M(F)$. The Kottwitz homomorphism \cite[$\S7$]{Ko97} induces an isomorphism $M(F)/M(F)_1 \cong X^*(Z(\widehat{M})^{I_F}_\Phi)$, so that $X^{\rm w}(M) \cong (Z(\widehat{M})^{I_F})_\Phi$, a diagonalizable group over $\mathbb C$.

Given $\chi \in X^{\rm w}(M)$, the Iwasawa decomposition of \cite[Cor.~9.1.2]{HRo} allows us to define an element $\Phi_{K, \chi} \in i^G_P(\chi)^K$ by
$$
\Phi_{K, \chi}(mnk) = \delta_P^{1/2}(m) \, \chi(m)
$$
for $m \in M(F)$, $n \in N(F)$, and $k \in K$ (here $N$ is the unipotent radical of an $F$-parabolic subgroup $P$ having $M$ as Levi factor). Then define the spherical function 
$$
\Gamma_\chi(g) = \int_{K} \Phi_{K, \chi}(kg) \, dk
$$
where ${\rm vol}_{dk}(K) = 1$. Let $\pi_{\chi}$ denote the smallest  $G$-stable subspace of the right  regular representation of $G(F)$ on $C^{\infty}(G(F))$ containing $\Gamma_\chi$. Then, as in \cite[$\S4.4$]{Car}, we see that $\pi_\chi$ is irreducible, that $\pi_{\chi} \cong \pi_{\chi'}$ iff $\chi = \, ^w\chi'$ for some $w \in W(G,A)$, and that every element of $\Pi(G/F,K)$ is isomorphic to some $\pi_\chi$.  Thus we have the following first parametrization of $\Pi(G/F,K)$.

\begin{prop}
The map $\chi \mapsto \pi_\chi$ sets up a 1-1 correspondence
\begin{equation} \label{1st_param}
(Z(\widehat{M})^{I_F})_\Phi/ W(G,A) ~ \widetilde{\longrightarrow} ~ \Pi(G/F,K).
\end{equation}
Moreover, if $f \in \mathcal H(G(F), K)$, then $\pi_\chi(f)$ acts on $\pi_\chi^K$ by the scalar $S(f)(\chi)$, where $S$ is the Satake isomorphism 
\begin{equation} \label{Sat_isom}
S  \, : \, \mathcal H(G(F), K) ~ \widetilde{\longrightarrow} ~ \mathbb C[(Z(\widehat{M})^{I_F})_\Phi/W(G,A)]
\end{equation}
of \cite[Thm. 1.0.1]{HRo}. 
Here the right hand side denotes the ring of regular functions on the affine variety $(Z(\widehat{M})^{I_F})_\Phi/W(G,A)$.
\end{prop}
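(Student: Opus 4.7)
The plan is to follow the classical Casselman--Cartier template for spherical representations of $p$-adic groups \cite[$\S4.4$]{Car}, with the Satake isomorphism (\ref{Sat_isom}) from \cite{HRo} replacing its classical (split, unramified) counterpart. First I would identify $\pi_\chi$ more concretely inside the principal series: the map $T: i^G_P(\chi) \to C^\infty(G(F))$ defined by $T(\Phi)(g) = \int_K \Phi(kg)\,dk$ is $G(F)$-equivariant for the right regular action, and sends $\Phi_{K,\chi}$ to $\Gamma_\chi$, so $\pi_\chi$ may be realized as the subrepresentation of $i^G_P(\chi)$ generated by $\Phi_{K,\chi}$. The Iwasawa decomposition of \cite[Cor.~9.1.2]{HRo} shows $\Gamma_\chi(1) = 1$, so $\Gamma_\chi \neq 0$ and thus $\pi_\chi \neq 0$.

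The central computation is to show that $\mathcal H(G(F),K)$ acts on the line $\mathbb C \cdot \Gamma_\chi$ by the character $f \mapsto S(f)(\chi)$. Concretely, for $f \in \mathcal H(G(F),K)$,
\begin{equation*}
(f * \Gamma_\chi)(g) = \int_{G(F)} f(h)\,\Gamma_\chi(h^{-1}g)\,dh,
\end{equation*}
and after unwinding $\Gamma_\chi$, applying $K$-biinvariance of $f$, and changing variables using the Iwasawa decomposition $G(F) = M(F)N(F)K$, this reduces to
\begin{equation*}
(f*\Gamma_\chi)(g) = \Gamma_\chi(g)\cdot \int_{M(F)}\Bigl(\delta_P^{-1/2}(m)\int_{N(F)}f(mn)\,dn\Bigr)\chi(m)\,dm,
\end{equation*}
and the inner integral is, up to normalization, the definition of the constant-term/Satake transform, so the scalar is exactly $S(f)(\chi)$. (The one verification worth carrying out carefully is that the definition of $S$ in \cite[Thm.~1.0.1]{HRo} matches this spherical-vector pairing.)

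With this in hand, the remaining assertions fall into place. Any nonzero $G$-subrepresentation of $\pi_\chi$ is generated by its $K$-fixed vectors (since $\pi_\chi$ is so generated, this is the standard Jacquet-module/Bernstein argument), and those $K$-fixed vectors are acted on by $\mathcal H(G(F),K)$ through a single character, hence form a line; hence $\pi_\chi$ is irreducible and $\pi_\chi^K = \mathbb C\cdot \Gamma_\chi$. For surjectivity of $\chi\mapsto\pi_\chi$, given $\pi\in\Pi(G/F,K)$ the commutative Hecke algebra $\mathcal H(G(F),K)$ acts on the finite-dimensional $\pi^K$ through a character, which by (\ref{Sat_isom}) corresponds to a unique $W(G,A)$-orbit of $\chi$'s, and matching characters forces $\pi\cong\pi_\chi$ by the equivalence of categories between representations generated by their $K$-fixed vectors and modules over $\mathcal H(G(F),K)$. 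Injectivity (that $\pi_\chi\cong\pi_{\chi'}$ forces $\chi$ and $\chi'$ to be $W(G,A)$-conjugate) likewise follows from the isomorphism in (\ref{Sat_isom}), since different orbits give different Hecke characters.

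The main technical obstacle I expect is the Hecke-eigenvalue computation in the second paragraph: the classical argument relies on $M$ being a torus and on the integration over a split unramified $A/A(\mathcal O_F)$, whereas here $M$ is a minimal $F$-Levi which need not be a torus when $G$ is not quasi-split, and $M(F)/M(F)_1$ is controlled not by a lattice of cocharacters but by the Kottwitz homomorphism (\ref{Kottwitz_isom}). One must therefore be careful that the Iwasawa decomposition, the Haar measure normalizations, and the definition of $S$ used in \cite{HRo} all combine to give the claimed eigenvalue $S(f)(\chi)$ without a stray constant; this is where the bulk of the real work lies, and everything else is formal once it is in place.
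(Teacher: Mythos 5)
Your proposal follows the same Cartier-template approach that the paper itself invokes (the paper's ``proof'' is the paragraph preceding the proposition, which simply cites \cite[$\S4.4$]{Car} and declares that the classical argument carries over once the Iwasawa decomposition and Satake isomorphism of \cite{HRo} are in hand); you have filled in the details the paper leaves implicit, and correctly identified the one real verification (that the Hecke eigenvalue computation, normalized as in \cite{HRo}, yields exactly $S(f)(\chi)$) as the crux.

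Two small caveats. In the displayed integral for $(f*\Gamma_\chi)(g)$ the exponent should be $\delta_P^{+1/2}$, not $\delta_P^{-1/2}$: with $\Phi_{K,\chi}(mnk)=\delta_P^{1/2}(m)\chi(m)$ and the Iwasawa change of variables, one gets $\int_M\bigl(\delta_P^{1/2}(m)\int_N f(mn)\,dn\bigr)\chi(m)\,dm$, which is precisely $S(f)(\chi)$; you flagged the normalization as the delicate point, and that is exactly where the sign lives. Also, ``equivalence of categories between representations generated by $K$-fixed vectors and $\mathcal H(G(F),K)$-modules'' is stronger than what holds (or is needed); for a general special maximal parahoric $K$ one should invoke only Bernstein's bijection between irreducible smooth $\pi$ with $\pi^K\neq 0$ and simple $\mathcal H(G(F),K)$-modules, which, together with commutativity of $\mathcal H(G(F),K)$ supplied by (\ref{Sat_isom}), is all the surjectivity and injectivity arguments require.
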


\section{Fixed-point subgroups under finite groups of automorphisms} \label{fixed-pt_sec}

Steinberg \cite{St} proved fundamental results on cyclic groups of automorphisms of a simply connected semisimple algebraic group. In the same context, when the generator of the cyclic group comes from a diagram automorphism, Springer \cite{Sp2} supplemented Steinberg's results by, among other things, giving information about the root data of the fixed-point group. %\footnote{I thank X.~He for bringing \cite{Sp2} to my attention.}  
The aim here is to extend some of the results of Steinberg and Springer to finite groups of automorphisms of a reductive group. The following might be known, but we include a complete proof here due to the lack of a suitable reference.%\footnote{After this article was finished, X.~He informed me of the reference \cite{Sp1}, which has some overlap with this section. However Springer's discussion, like Steinberg's \cite{St}, is limited to (connected) simply connected semi-simple groups and cyclic $I$. Much of this section could be seen as a generalization of the corresponding parts of \cite{St} and \cite{Sp2}.} 

Notation: If a group $J$ acts by automorphisms on an algebraic group $P$, we write $P^\circ$ for the neutral component of $P$ and often write $P^{J, \circ}$ instead of $(P^J)^\circ$.

\begin{prop} \label{Steinberg_style_prop} Let $H$ be a possibly disconnected reductive group over an algebraically closed field  $k$. Assume that a finite group $I$ acts by automorphisms on $H$ and preserves a splitting $(T, B, X)$, consisting of a Borel subgroup $B$, a maximal torus $T$ in $B$, and a principal nilpotent element $X = \sum_{\alpha \in \Delta(T,B)} X_\alpha$ for some non-zero elements $X_\alpha \in ({\rm Lie} \, H^\circ)_\alpha$ indexed by the $B$-positive simple roots $\Delta(T,B)$ in $X^*(T)$. Let $U$ be the unipotent radical of $B$, and let $N = N(H^\circ, T)$ be the normalizer of $T$ in $H^\circ$. Then:
\begin{enumerate}
\item[(a)] The algebraic group $H^I$ is reductive with identity component $(H^I)^\circ = [(H^\circ)^I]^\circ$ and with splitting $(T^{I,\circ}, B^{I,\circ}, X^I)$, where $B^{I,\circ} = T^{I,\circ} U^I$ and where $X^I$ is a principal nilpotent in ${\rm Lie}(H^{I,\circ})$ constructed from $X$. (If ${\rm char}(k) \neq 2$, then $X^I = X$.)
\item[(b)] We have $T^I \cap H^{I,\circ} = T^{I,\circ}$ and $N^I \cap H^{I,\circ} = N( H^{I,\circ}, T^{I,\circ})$.
\item[(c)] If $W := W(H^\circ, T) := N/T$,  then every element of $W^I$ has a representative in $N^I \cap H^{I,\circ}$, and thus $W( H^{I,\circ}, T^{I,\circ}) = W^I$.
\item[(d)] The inclusion $T \hookrightarrow H^\circ$ induces a bijection $\pi_0\,T^I ~ \widetilde{\rightarrow} ~ \pi_0 \, (H^\circ)^I$.
\end{enumerate} 
 \end{prop}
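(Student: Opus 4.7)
The plan is to reduce everything to the theorems of Steinberg \cite{St} and Springer \cite{Sp2} via two successive simplifications. First I would pass from the possibly disconnected $H$ to its identity component $H^\circ$: since $H^\circ$ is a characteristic subgroup it is $I$-stable, and a direct comparison of neutral components gives $H^{I,\circ} = (H^I)^\circ = [(H^\circ)^I]^\circ$, so parts (b), (c), (d) (which involve $H^\circ$ only) are unaffected and only the reductivity and splitting claim in (a) requires additional work. The second simplification replaces $H^\circ$ by the simply connected cover of its derived group: the central isogeny $Z(H^\circ)^\circ \times H_{\rm sc} \twoheadrightarrow H^\circ$ is $I$-equivariant, $I$ lifts to splitting-preserving automorphisms of $H_{\rm sc}$, and the identity component of $I$-fixed points in the central torus $Z(H^\circ)^\circ$ is a subtorus that is straightforward to analyze. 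The heart of the matter is then the simply connected case, where Steinberg's theorem asserts that $H_{\rm sc}^I$ is connected reductive with connected maximal torus $T_{\rm sc}^I$, and Springer's refinement computes its root datum explicitly in terms of the $I$-orbits on the roots and coroots of $H_{\rm sc}$, together with the construction of a principal nilpotent.

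Granting those inputs, part (a) for $H^\circ$ is assembled as follows. The maximal torus of $[(H^\circ)^I]^\circ$ is $T^{I,\circ}$; the unipotent radical of its Borel $B^{I,\circ}$ is $U^I$, which is connected and decomposes as the product of fixed-point root subgroups indexed by $I$-orbits on the $B$-positive roots (a direct consequence of the splitting being $I$-stable); and the principal nilpotent is $X^I := \sum_{\mathcal O} X_{\mathcal O}$, with $X_{\mathcal O} := \sum_{\alpha \in \mathcal O} X_\alpha$ summed over each $I$-orbit $\mathcal O$ of simple roots. When $\mathrm{char}(k) \neq 2$, this sum is simply $X$ itself, which is $I$-fixed by hypothesis; in characteristic $2$ a small Springer-type modification is needed to ensure that $X^I$ is principal in $\mathrm{Lie}([(H^\circ)^I]^\circ)$.

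Parts (b), (c), (d) then follow by short arguments. The first equation of (b) is a consequence of (a): if $t \in T^I \cap H^{I,\circ}$, then $t$ centralizes $T^{I,\circ}$ (both lie in the abelian group $T$), and because $T^{I,\circ}$ is a maximal torus of the connected reductive group $H^{I,\circ}$, $t$ must itself lie in $T^{I,\circ}$. For (c), the Springer root-datum description from (a) presents $W(H^{I,\circ}, T^{I,\circ})$ as generated by simple reflections $s_{\mathcal O}$ indexed by $I$-orbits $\mathcal O$ on the simple roots of $H^\circ$; each $s_{\mathcal O}$ admits an explicit representative in $N^I \cap H^{I,\circ}$ via the Tits/Springer construction in the corresponding rank-one subgroup, and inspection of the root data also yields the identification $W(H^{I,\circ}, T^{I,\circ}) = W^I$. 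The second equation of (b) then follows: the inclusion $N^I \cap H^{I,\circ} \subseteq N(H^{I,\circ}, T^{I,\circ})$ (any $n \in N^I \cap H^{I,\circ}$ normalizes $T^I$ and hence $T^{I,\circ}$) combines with the existence of the reflection representatives to force both sides to have image $W^I$ modulo $T^{I,\circ}$, hence to coincide. For (d), injectivity of $\pi_0 T^I \to \pi_0 (H^\circ)^I$ is immediate from the first equation of (b), and surjectivity follows from the Steinberg-style fact that every $I$-stable maximal torus of $H^\circ$ is $(H^\circ)^{I,\circ}$-conjugate to $T$.

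The main obstacle I anticipate is passing from a cyclic group of splitting-preserving automorphisms (which is essentially what Steinberg and Springer actually treat) to a general finite group $I$. I would handle this by induction on $|I|$: pick a normal subgroup $I' \triangleleft I$ of prime index, apply the inductive hypothesis to $(H, I')$ to obtain the connected reductive group $H^{I',\circ}$ with inherited splitting $(T^{I',\circ}, B^{I',\circ}, X^{I'})$, and then apply the cyclic case to the induced $I/I'$-action on $H^{I',\circ}$, which preserves this splitting. A secondary technical point is the behavior of $U^I$ in positive characteristic when $\mathrm{char}(k)$ divides $|I|$; however, because $I$ permutes the root subgroups appearing in the splitting, the explicit product decomposition of $U^I$ along $I$-orbits of positive roots persists through each stage of the induction, ensuring that $U^I$ remains connected and that $B^{I,\circ} = T^{I,\circ} U^I$ throughout.
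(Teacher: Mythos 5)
Your overall blueprint — reduce to $H^\circ$, pass to an isogenous cover, invoke Steinberg/Springer for the core case, then assemble (b)--(d) by short arguments — is close in spirit to the paper's. A few of your short arguments are actually \emph{cleaner} than the paper's: for instance, deducing $T^I\cap H^{I,\circ}=T^{I,\circ}$ from the fact that the centralizer of the maximal torus $T^{I,\circ}$ in the connected reductive group $H^{I,\circ}$ is $T^{I,\circ}$ itself is more direct than the paper's route through the generators $\langle (T^I)^\circ, U^I,\overline U^I\rangle$ and a reduction to simply connected groups. Likewise your simply-connected reduction can be made to work; the paper instead passes to the adjoint quotient via the exact sequence $1\to Z^I\cap H^{I,\circ}\to H^{I,\circ}\to (H^\circ_{\rm ad})^{I,\circ}\to 1$, but both directions are fine.

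The genuine gap is in your treatment of the ``main obstacle,'' the passage from cyclic $I$ to a general finite group $I$. Your proposed induction — pick a normal subgroup $I'\triangleleft I$ of prime index, apply the inductive hypothesis to $(H,I')$, then the cyclic case to the induced $I/I'$-action on $H^{I',\circ}$ — simply does not get off the ground when $I$ has no proper nontrivial normal subgroup, i.e.\ when (the image in $\Aut(H)$ of) $I$ is a non-abelian simple group. This genuinely occurs in the setting of the Proposition: for example $H^\circ = (\SL_2)^5$ with $I\cong A_5$ permuting the factors preserves the product splitting and acts faithfully, yet $A_5$ has no normal subgroup of prime index and so your induction never starts. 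More generally, for any decomposition of a semisimple group into simple factors, $I$ can permute the factors through an arbitrary finite permutation group. So the inductive scheme as stated is incomplete.

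The missing reduction — and what the paper actually does — is a preliminary decomposition \emph{before} trying to shrink $I$: first pass to the adjoint (or simply connected) group, which is a product of simple factors; $I$ permutes these factors, so decompose into $I$-orbits and observe that the fixed points along an orbit are identified with the fixed points of the \emph{stabilizer} of one chosen factor acting on that single simple group. This reduces to $H$ simple. Then, because the action preserves a splitting, the stabilizer acts through the diagram automorphism group of the simple Dynkin diagram, which by the classification is trivial, $\mathbb Z/2$, $\mathbb Z/3$, or $S_3$. Only at this point is one in a position to invoke Steinberg for the cyclic cases and handle $S_3$ by two successive cyclic applications (which is exactly your induction, but now on a group for which it terminates). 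Without this initial orbit/classification step, the assertion that ``Steinberg's theorem asserts that $H_{\rm sc}^I$ is connected reductive'' for general finite $I$ is not something Steinberg or Springer prove, and your induction cannot bridge the gap.

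A smaller point: your surjectivity argument for (d) invokes ``the Steinberg-style fact that every $I$-stable maximal torus of $H^\circ$ is $(H^\circ)^{I,\circ}$-conjugate to $T$.'' For a cyclic group generated by a semisimple automorphism this is \cite[Thm.~7.5]{St}, but for general finite $I$ (especially with ${\rm char}(k)$ dividing $|I|$, which the Proposition deliberately allows) it is not an off-the-shelf fact and essentially requires the structure theory you are in the middle of proving. It can be recovered \emph{a posteriori} from parts (a)--(c) together with the observation that $C_{H^\circ}(T^{I,\circ})=T$, but as written this step is circular. The paper avoids it entirely by proving $(H^\circ)^I = \langle T^I, U^I, \overline U^I\rangle$ via Bruhat decomposition and the lifting of $W^I$ into $N^I\cap H^{I,\circ}$, whence $(H^\circ)^I = T^I\cdot (H^\circ)^{I,\circ}$ directly and (d) follows without any conjugacy of tori.
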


Before beginning the proof, note that giving the data of $X$ is equivalent to giving the data $\{ x_\alpha \}_{\alpha \in \Delta(T,B)}$ of root group homomorphisms $x_\alpha: {\mathbb G}_a ~ \widetilde{\rightarrow} ~ U_\alpha$, where $U_\alpha$ is the maximal connected unipotent subgroup of $H^\circ$ normalized by $T$ and with ${\rm Lie}(U_\alpha) = ({\rm Lie}\, H^\circ)_\alpha$. This is because the Lie functor gives an isomorphism ${\rm Isom}_{k-{\rm Grp}}({\mathbb G}_a, U_\alpha) = {\rm Isom}_k(k, 
{\rm Lie}(U_\alpha))$.

\begin{proof}
Quite generally $(H^I)^\circ$ contains $[(H^\circ)^I]^\circ$ with finite index, and as both are connected algebraic groups, they coincide.  

\begin{lemma} \label{root_lem}
Let $\Psi$ be a reduced root system in a real vector space $V$, with set of simple roots $\Delta$.  Suppose $I$ is a finite group of automorphisms of $V$ which preserves $\Psi$ and $\Delta$.  Let $\bar{\alpha} \in V$ denote the average of the $I$-orbit of $\alpha \in \Psi$, and let $\Psi^I = \{ \bar{\alpha} ~ | ~ \alpha \in \Psi\}$ and $\Delta^I = \{ \bar{\alpha} ~ | ~ \alpha \in \Delta \}$. Then 
\begin{enumerate}
\item[(1)] $\Psi^I$ is a possibly non-reduced root system in $V^I$ with set of simple roots $\Delta^I$;
\item[(2)] $W(\Psi^I) = W(\Psi)^I$, where $W(\Sigma)$ denotes the Weyl group of a root system $\Sigma$.
\end{enumerate}
\end{lemma}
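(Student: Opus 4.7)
The plan is to classify the $I$-orbits on $\Delta$, describe $\Psi^I$ and $\Delta^I$ explicitly from this orbit data, and then verify (1) axiom-by-axiom and (2) via explicit lifts of generating reflections.

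The structural core is an orbit-classification lemma: for any $I$-orbit $\mathcal{O} \subset \Delta$, any two elements of $\mathcal{O}$ lying in the same connected component of $\Delta$ are either orthogonal or form the central adjacent pair of an $A_{2n}$-component swapped by a diagram flip. The proof is a short case analysis: if $\sigma \in I$ sends a simple root $\alpha$ to an adjacent simple root $\beta$, then since the Dynkin diagram is a tree and consecutive elements of the $\langle \sigma \rangle$-orbit of $\alpha$ must be adjacent, the $\langle \sigma \rangle$-orbit is exactly $\{\alpha, \beta\}$; inspection of the list of connected Dynkin diagrams admitting a nontrivial automorphism ($A_n$ for $n \geq 2$, $D_n$, $D_4$ with $S_3$, and $E_6$) then shows the only diagram with an involution swapping two adjacent vertices is $A_{2n}$ with its central pair. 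Using this, for each $\alpha \in \Delta$ I would describe $\bar\alpha$ and $s_{\bar\alpha}|_{V^I}$ explicitly: for an orbit of pairwise orthogonal roots, $s_{\bar\alpha}|_{V^I}$ equals the restriction of the commuting product $\prod_{\beta \in \mathcal{O}} s_\beta \in W(\Psi)^I$; for an orbit containing adjacent pairs in $A_{2n}$-components, the average $\overline{\beta_1 + \beta_2}$ of the sum equals $2\bar\alpha$ and lies in $\Psi^I$ (accounting for possible non-reducedness), and $s_{\bar\alpha}|_{V^I}$ is produced by the longest elements of the relevant $W(A_2)$-factors.

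Given these explicit lifts, (1) reduces to routine checks: $\Delta^I$ spans $V^I$ because averaging is the orthogonal projection $V \twoheadrightarrow V^I$; the integrality $\langle \bar\alpha, \bar\beta^\vee \rangle \in \mathbb{Z}$ and the list of scalar multiples of $\bar\alpha$ in $\Psi^I$ (namely $\pm \bar\alpha$, plus $\pm 2\bar\alpha$ precisely in the adjacent-pair case) are computations in the rank-$\leq 2$ subsystems that arise; closure of $\Psi^I$ under the reflections $s_{\bar\alpha}$ follows from the explicit lifts; and $\Delta^I$ is a set of simple roots because averaging an expansion $\beta = \sum_{\alpha \in \Delta} n_\alpha \alpha$ over the $I$-orbit of $\beta$ yields a presentation of $\bar\beta$ in $\Delta^I$ with coefficients of constant sign. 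For (2), the inclusion $W(\Psi^I) \subseteq W(\Psi)^I|_{V^I}$ is immediate from the explicit lifts. For the reverse, any $w \in W(\Psi)^I$ restricts to an element of $GL(V^I)$ preserving $\Psi^I$ and so lies in ${\rm Aut}(\Psi^I)$; by inspection of the list of possible $\Psi^I$ that arise here (types $B_n$, $C_n$, $BC_n$, $F_4$, $G_2$, and products thereof), one has ${\rm Aut}(\Psi^I) = W(\Psi^I)$, giving $W(\Psi)^I|_{V^I} \subseteq W(\Psi^I)$.

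The main obstacle is the orbit-classification lemma: while each case check is elementary, organizing the argument so that the adjacent-pair possibility (and hence the non-reduced $A_2$-contribution) is correctly isolated --- particularly when $I$ permutes several connected components of $\Delta$ on each of which it acts by the $A_{2n}$-flip --- requires some care. A secondary subtlety, in the reverse inclusion of (2), is the appeal to the classification of $\Psi^I$ to conclude ${\rm Aut}(\Psi^I) = W(\Psi^I)$; a more uniform argument would be desirable.
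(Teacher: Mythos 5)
Your approach is genuinely different from the paper's: the paper first handles cyclic $I = \langle\tau\rangle$ by invoking Steinberg's results \cite[1.32, 1.33]{St} that $\Psi_\tau$ (obtained from $\Psi^\tau$ by discarding shorter multiples) is a root system with Weyl group $W(\Psi)^\tau$, then argues that adjoining the extra half-roots in type $A_{2n}$ (using the root system of a quasi-split $U_{2n+1}$ as a model) still gives a root system, and finally reduces the general $I$ to the cyclic case via the classification of connected Dynkin diagram automorphism groups ($\mathbb{Z}/2$, $\mathbb{Z}/3$, $S_3$, with the $S_3$-orbits on $D_4$ coinciding with the $\mathbb{Z}/3$-orbits). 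You instead try to prove the result from scratch via an orbit-classification lemma and explicit lifts, which is reasonable and close in spirit to the content of Steinberg's proof, but there is a genuine gap in your argument for the reverse inclusion $W(\Psi)^I|_{V^I} \subseteq W(\Psi^I)$.

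The problem is your claimed equality $\mathrm{Aut}(\Psi^I) = W(\Psi^I)$. Your list of possible types for $\Psi^I$ (namely $B_n$, $C_n$, $BC_n$, $F_4$, $G_2$ and products) is incomplete, because $I$ need not act faithfully on each connected component of $\Delta$, nor even nontrivially. If $I$ acts trivially on a component of type $A_n$ ($n \geq 2$), $D_n$, or $E_6$, that component contributes itself to $\Psi^I$ and carries a nontrivial diagram automorphism, so $\mathrm{Aut}(\Psi^I) \supsetneq W(\Psi^I)$. The same happens when $I$ merely permutes several isomorphic components $\Psi_0, \ldots, \Psi_0$ transitively without folding them internally: then $\Psi^I$ is a scaled copy of $\Psi_0$, which can again be of type $A_n$, $D_n$, or $E_6$. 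Even sticking to your claimed list, a product like $B_2 \times B_2$ has an outer $\mathbb{Z}/2$ permuting the factors, so $\mathrm{Aut}(B_2\times B_2) \neq W(B_2 \times B_2)$. In all these cases the inclusion $W(\Psi)^I|_{V^I} \subseteq W(\Psi^I)$ is still \emph{true}, but it cannot be deduced from the bound $W(\Psi)^I|_{V^I} \subseteq \mathrm{Aut}(\Psi^I)$ alone.

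The standard fix, and what Steinberg actually proves, is the stronger statement that $W(\Psi)^I$ is \emph{generated} by your explicit lifts $w_\alpha$ (the commuting product of reflections over the orbit, resp.\ the product of longest elements of the $A_2$-factors in the adjacent-pair case), for $\alpha$ ranging over $\Delta$. Once one knows $W(\Psi)^I = \langle w_\alpha : \alpha \in \Delta \rangle$, the reverse inclusion is immediate since each $w_\alpha|_{V^I} = s_{\bar\alpha}$. Establishing this generation is the substantive content of \cite[1.32]{St}, and it is exactly the step your proposal skips. You would either need to supply that argument (e.g.\ by a length-function or gallery-connectedness argument for the action of $W(\Psi)^I$ on $V^I$) or simply cite Steinberg as the paper does. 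Your observation that a more uniform argument ``would be desirable'' was correctly flagging a soft spot, but the issue is not merely aesthetic: as stated, the step is false.
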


\begin{proof}
First assume $I = \langle \tau \rangle$. Let $\Psi_\tau \subseteq \Psi^\tau$ be defined by discarding those elements of $\Psi^\tau$ which are smaller multiples of others. Then \cite[1.32, 1.33]{St} shows that $\Psi_\tau$ is a root system with Weyl group $W(\Psi)^\tau$.  The only difference between $\Psi_\tau$ and $\Psi^\tau$ is that the latter could contain $\frac{1}{2}\alpha'$ for $\alpha' \in \Psi_\tau$, and then only for a component of $\Psi$ of type $A_{2n}$. Consideration of the root system for a quasi-split unitary group in $2n+1$ variables attached to a separable quadratic extension of a $p$-adic field (cf. \cite[$\S1.15$]{Tits}) shows that adding such half-roots to a root system of form $\Psi_\tau$, still gives a root system; so $\Psi^\tau$ is indeed a root system, with simple roots $\Delta^\tau$ and with the same Weyl group $W(\Psi)^\tau$.  Note that if $|\tau|$ is odd, then $\Psi^\tau$ is again reduced (comp.~\cite[Lemma 9.2]{HN}).

Now decompose $(V, \Psi)$ into a sum of simple systems $(V_j, \Psi_j)$. The action of $I$ permutes these simple systems while the stabilizer of each component continues to act through its automorphism group. Therefore we may assume $(V,\Psi)$ is simple. Using the classification, we may assume $I$ acts through a faithful action of $\mathbb Z/2\mathbb Z, \, \mathbb Z/3\mathbb Z,$ or $S_3$ on $\Delta$. In the last case, $\Psi = D_4$ and the $I$-orbits on $\Psi^+$ and on $\Delta$ coincide with those of the subgroup $\mathbb Z/3\mathbb Z \subset S_3$. Thus we may assume $I$ is cyclic, and we may apply the preceding paragraph.
\end{proof}

We will apply Lemma \ref{root_lem} when $\Psi=\Psi(H^\circ, T)$ (resp. $\Delta = \Delta(T, B)$), the set of roots (resp.~$B$-simple roots) for $T$ in ${\rm Lie}(H^\circ)$ in the vector space $V = X^*(T) \otimes \mathbb R$. The set of positive roots $\Psi^+$ is a disjoint union of subsets $S_\alpha$, where $S_\alpha$ consists of all those positive roots whose projection to $V^I$ is proportional to that of $\alpha$ (comp.~\cite[Thm.~8.2(2')]{St}).  We will always index $S_\alpha$ by a {\em minimal} element $\alpha$ in this set (use the usual partial order on positive roots).  For example (cf.~\cite[Thm.~8.2(2')]{St}), if $I = \langle \tau \rangle$, then $S_\alpha$ comes in two types:

 \begin{enumerate}
\item[(Type 1)] $S_\alpha$ is a $\tau$-orbit $\{ \alpha, \tau \alpha, \dots \},$ no two of which add up to a root;
\item[(Type 2)] $S_\alpha = \{\alpha, \tau \alpha, \beta \}$, where $\beta := \alpha + \tau \alpha$ is a root; this occurs only in type $A_{2n}$.
\end{enumerate}

For a root system of the form $\Psi^\tau$, we write $(\Psi^\tau)^{\rm red}$ (resp.~$(\Psi^\tau)_{\rm red})$) for the root system we get by discarding vectors from $\Psi^\tau$ which are shorter (resp.~longer) multiples of others.  For example, $\Psi_\tau = (\Psi^\tau)^{\rm red}$.

We now make the following temporary assumptions:
\begin{itemize}
\item[(i)] $H^{I,\circ}$ is reductive with splitting $( T^{I,\circ}, B^{I,\circ}, X^I)$, where $X^I$ denotes a principal nilpotent element of ${\rm Lie}(H^{I,\circ})$ constructed from $X$. (If ${\rm char}(k) \neq 2$, then $X^I = X$.)
\item[(ii)] $\Psi(H^{I,\circ}, T^{I,\circ})$ can be identified with $(\Psi^I)_{\rm red}$ if ${\rm char}(k) \neq 2$ and with $(\Psi^I)^{\rm red}$ if ${\rm char}(k) = 2$.  
\item[(iii)] If ${\rm char}(k) \neq 2$, the group $U^I$ is the product of certain subgroups $U_{\bar{\alpha}}$ indexed by the various subsets $S_\alpha$, where ${\rm Ad}(T^{I,\circ})$ acts via $\bar{\alpha}$ on ${\rm Lie}(U_{\bar{\alpha}}) \subset {\rm Lie}(H^{I,\circ})$. If $\alpha \in \Delta(T, B)$, then $U_{\bar{\alpha}} \cong {\mathbb G}_a$, and in general $U_{\bar{\alpha}}$ is either trivial or is isomorphic to ${\mathbb G}_a$. In particular $U^I$ is connected. Furthermore, $U_{\bar{\alpha}}$ is contained in the product of the groups $U_{\alpha'}$ for $\alpha' \in S_\alpha$. If ${\rm char}(k) = 2$, the same statements hold with $U_{\bar{\alpha}}$ replaced by $U_{2 \bar{\alpha}}$ when $S_\alpha$ is of Type 2.
\end{itemize}

\begin{Remark}
Property (iii) automatically implies another property:
\begin{itemize}
\item[(iv)] The map $N^I \rightarrow W(\Psi)^I, \, n \mapsto w_n,$ has the property that for every subset $S_\alpha$, we have $n U_{i\bar{\alpha}} n^{-1} = U_{w_n(i\bar{\alpha})}$, for $i \in \{1,2\}$.
\end{itemize}
\end{Remark}

\begin{Remark} \label{nontriv_rem}
Later we will see that $U_{\bar{\alpha}}$ (resp. $U_{2\bar{\alpha}}$) is isomorphic to ${\mathbb G}_a$ for {\em all} $\alpha \in \Psi^+$ representing a set $S_\alpha$, not just for $\alpha \in \Delta(T,B)$.
\end{Remark}

\noindent {\bf Lifting step.} Let $s_{\bar{\alpha}} \in W(\Psi)^I$ be the reflection corresponding to a {\em simple} root $\bar{\alpha}$ for some $\alpha \in \Delta(T, B)$ (cf.~Lemma \ref{root_lem}). We wish to show it can be lifted to an element in $N^I \cap H^{I,\circ}$. Using (iii), we may choose $u \in U_{\bar{\alpha}} \backslash \{1\}$ if ${\rm char}(k) \neq 2$ or $S_\alpha$ is Type 1 (resp.~$u \in U_{2\bar{\alpha}} \backslash \{1\}$ if ${\rm char}(k) = 2$ and $S_\alpha$ is Type 2). Using the Bruhat decomposition for $\overline{U}$ in place of $U$, we can write uniquely $u = u_1 n u_2$, where $u_2 \in \overline{U}$, $n \in N$, and $u_1 \in \overline{U} \cap n U n^{-1}$; since $u$ is $I$-fixed, $u_1, n, u_2$ are too.  The element $n$ belongs to $N^I \cap \overline{U}^I U_{i\bar{\alpha}} \overline{U}^I$ ($i \in \{1,2\}$) and thus to $N^I \cap H^{I,\circ}$ by (iii). The element $w_n \in W \cong W(\Psi)$ to which $n$ projects is different from $1$, is fixed by $I$, and is in the group generated by the reflections $s_{\tau(\alpha)}, \, \tau \in I$. For the last statement, use (iii) and \cite[9.2.1]{Sp1} to show that $n \in \langle U_{\pm \alpha'} \rangle_{\alpha'} \cdot \overline{U}$ where $\alpha'$ ranges over elements in $S_\alpha$ (a Levi subset of $\Psi^+$ when $\alpha \in \Delta$), and then use the Bruhat decomposition again. Let $\Psi^{I,+}$ denote the positive roots of $\Psi^I$. If $w_n \neq s_{\bar{\alpha}}$ then $w_n$ sends some root in $\Psi^{I,+} \backslash \{ \bar{\alpha}, 2\bar{\alpha} \}$ to $-\Psi^{I,+}$. Then as an element of $W(\Psi)$, $w_n$ makes negative some positive root outside $S_\alpha$, in violation of $w_n \in \langle s_{\tau(\alpha)}, \tau \in I \rangle$. Thus $w_n = s_{\bar{\alpha}}$, and $n$ is the desired lift of $s_{\bar{\alpha}}$.

By Lemma \ref{root_lem}, $W(\Psi)^I = W(\Psi^I)$ and so any element $w \in W(\Psi)^I$ is a product of elements $s_{\bar{\alpha}}$ as above; hence $w$ can be lifted to $N^I \cap H^{I,\circ}$.  This proves part of (c). Since $N^I$ clearly maps to $W^I \cong W(\Psi)^I$, the proof also shows that $N^I \subset \langle T^I, U^I, \overline{U}^I \rangle$. As $(H^\circ)^I = \langle N^I, U^I \rangle$ by the Bruhat decomposition of $H^\circ$, we obtain 
\begin{equation} \label{H^I-generators}
(H^\circ)^I = \langle T^I, U^I, \overline{U}^I \rangle.
\end{equation}
From this we see  $[(H^\circ)^I]^\circ$ contains the connected subgroup $\langle (T^I)^\circ, U^I, \overline{U}^I \rangle$ with finite index, and so 
\begin{equation} \label{H^Icirc-generators}
[(H^\circ)^I]^\circ = \langle (T^I)^\circ, U^I, \overline{U}^I \rangle.
\end{equation} 
Hence 
\begin{equation} \label{pi_0(H^I)}
(H^\circ)^I = T^I \cdot [(H^\circ)^I]^\circ.
\end{equation}
We claim that $(T^I)^\circ = T^I \cap [(H^\circ)^I]^\circ$. The inclusion "$\subseteq$" is clear. As for the other, using (\ref{H^Icirc-generators}) it is enough to show that $T^I \cap \langle U^I , \overline{U}^I \rangle \subset (T^I)^\circ$. But $\langle U^I, \overline{U}^I \rangle$ lies in the image of  $[(H^\circ)_{\rm sc}]^I \rightarrow [(H^\circ)_{\rm der}]^I$, and so we are reduced to the case where $H^\circ$ is simply connected.\footnote{Since it fixes a splitting, the action of $I$ on $(H^\circ)_{\rm der}$ can be lifted to give a compatible action on its simply-connected cover $(H^\circ)_{\rm sc}$, for example by using the Isomorphism Theorem \cite[9.6.2]{Sp1}.} 
In that case $X^*(T)$ has a $\mathbb Z$-basis permuted by $I$, and so $T^I$ is already connected, and the result is obvious. 

It is clear that $N^I \cap H^{I,\circ} \subseteq N(H^{I,\circ}, T^{I,\circ})$.  We claim that equality holds.
The Bruhat decomposition for the reductive group $H^\circ$ implies that an element of $[(H^\circ)^I]^\circ$ decomposes uniquely in the form $u n v$ where $n \in N^I \cap [(H^\circ)^I]^\circ$, $v \in U^I$ and $u \in U^I \cap \, ^n\overline{U}^I$. (Note $n$ automatically belongs to $[(H^\circ)^I]^\circ$ since $U^I$ and $U^I \cap \, ^n\overline{U}^I$ are connected, as follows from (iii, iv).) Since by (i,iii) $[(H^\circ)^I]^\circ$ is reductive with Borel subgroup $B^{I,\circ} = T^{I,\circ} U^I$, the element also decomposes as $u_1 n_1 v_1$ where $n_1 \in N(H^{I,\circ}, T^{I,\circ})$, $v_1 \in U^I$, and $u_1 \in U^I \cap \, ^{n_1}\overline{U}^I$. Comparing these decompositions, we see $n = n_1$, i.e.,  $N(H^{I,\circ}, T^{I,\circ}) = N^I \cap H^{I,\circ}$.  

As $N^I \cap H^{I,\circ}$ surjects onto $W^I$, we deduce $N^I \cap H^{I,\circ}/T^I \cap H^{I,\circ} ~ \widetilde{\rightarrow} ~ W^I$. The above paragraphs show the left hand side is $W(H^{I,\circ}, T^{I,\circ})$.

At this point we have proved (a-d) assuming (i-iii).\footnote{In fact only (i,iii) are needed in the argument.} Now we need to prove (i-iii).  We first consider the case where $I$ is generated by a single element $\tau$. We use results of Steinberg \cite{St}, especially 8.2, 8.3. (Much of what we need also appears in \cite[$\S1.1$]{KS}.) We will adapt the proof of \cite[Theorem 8.2]{St}: it assumes $H^\circ$ is semisimple and simply connected and only assumes $\tau$ fixes $T$ and $B$, but the argument carries over when $\tau$ fixes a splitting because in \cite[Theorem 8.2]{St}~step (5) we may take $t =1$. Indeed, for each $B$-positive root $\alpha \in X^*(T)$ let $x_\alpha: {\mathbb G}_a \rightarrow H^\circ$ be the corresponding root homomorphism, and write 
\begin{equation} \label{c_alpha_defn}
\tau x_{\tau\alpha}(y) = x_\alpha(c_{\tau \alpha} y)
\end{equation}
for all $y \in k$ and some constants $c_\alpha \in k^\times$. (Here $\tau \alpha$ is defined to be $\alpha \circ \tau: T \rightarrow \mathbb G_m$.) Then the hypothesis that $\tau$ fixes $X$ is equivalent to $c_\alpha = 1, \,\, \forall \alpha \in \Delta(T,B)$.  Since $\alpha(t) = c_{\tau \alpha}$ for $\alpha \in \Delta(T,B)$ by definition of $t$, we may choose $t = 1$.

Our first task is to prove (iii).
Recall that \cite{St} shows that $U^\tau$ is connected by analyzing the conditions under which an element of the form $x_\alpha(y_\alpha) x_{\tau \alpha}(y_{\tau \alpha}) \cdots$ (indices ranging over $S_\alpha$) belongs to $U^\tau$. To ease notation, write $H_1$ (resp.~$T_1$, $B_1$) for $H^{\tau,\circ}$ (resp.~$T^{\tau,\circ}$, $B^{\tau,\circ}$).  First suppose $\alpha \in \Psi^+$ represents a set $S_\alpha$ of Type 1. Consider the average $\bar{\alpha}$ of the orbit $\{ \alpha, \tau\alpha, \dots\}$. By \cite[Thm.~8.2 (2)]{St}, there are nontrivial elements in $U^\tau$ of the form $x_\alpha(y_\alpha) x_{\tau \alpha}(y_{\tau \alpha}) \cdots$ only if $c_\alpha c_{\tau \alpha} \cdots = 1$, in which case they form a 1-parameter subgroup $U_{\bar{\alpha}}$ consisting of elements of the form
\begin{equation} \label{proto_type1_rtgrp}
x^{H_1}_{\bar{\alpha}}(y) = x_\alpha(y) x_{\tau \alpha} (c_{\tau \alpha}^{-1}y) \cdots
\end{equation}
for $y \in k$. If $\alpha \in \Delta(T,B)$, then $c_\alpha = c_{\tau \alpha} = \cdots = 1$, so we have $U_{\bar{\alpha}} \cong {\mathbb G}_a$ and $x^{H_1}_{\bar{\alpha}}$ is given by the formula
\begin{equation} \label{type1_rootgroup}
x^{H_{1}}_{\bar{\alpha}}(y) = x_\alpha(y)\, x_{\tau \alpha}(y) \cdots .
\end{equation}

Next suppose $\alpha \in \Psi^+$ represents a set $S_\alpha$ of Type 2: $S_\alpha = \{ \alpha, \tau(\alpha), \beta \}$, where $\beta := \alpha + \tau \alpha$ is a root of $H^\circ$.  Then $\beta/2 = \bar{\alpha}$. Following \cite[Theorem 8.2]{St}, we may normalize the homomorphism $x_\beta$ such that $[x_\alpha(y), x_{\tau \alpha}(y')] = x_\beta(yy')$, where $[a,b] := a^{-1}b^{-1}ab$. We stress that $x_\beta$ depends on the choice of ordering $(\alpha, \tau \alpha)$ of the set $\{\alpha, \tau \alpha \}$. It is proved in \cite[Thm.~8.2(2)]{St} that there are nontrivial elements in $U^\tau$ of the form $x_{\alpha}(y_\alpha) x_{\tau \alpha}(y_{\tau \alpha}) x_{\beta}(y_\beta)$ only if $c_\alpha c_{\tau \alpha} = \pm 1$.  In fact we will always have $c_\alpha c_{\tau \alpha} = 1$: since we are dealing with root subgroups we may assume $G$ is adjoint and simple of type $A_{2n}$, and that $\tau$ is the unique (order two) element in ${\rm Aut}({\rm PGL}_{2n+1})$ which fixes the standard splitting and induces the order two diagram automorphism; then for all $y \in k$, $x_\alpha(y) = \tau^2x_{\tau^2\alpha}(y) = \tau x_{\tau \alpha}(c_\alpha y ) = x_{\alpha}(c_{\tau \alpha} c_\alpha y)$.  

Assume ${\rm char}(k) \neq 2$. As $c_{\alpha} c_{\tau \alpha} =1$ is automatic, according to the proof of \cite[Thm.8.2(2)]{St} we may define $x^{H_{1}}_{\bar{\alpha}} : \mathbb G_a \rightarrow H_1$ by
\begin{equation} \label{proto_type2_rtgrp_char_not_2}
x^{H_1}_{\bar{\alpha}}(y) = x_\alpha(y)\, x_{\tau \alpha}(c_\alpha y) \, x_{\beta}(-c_\alpha y^2/2).
\end{equation} 
Let $U_{\bar{\alpha}} \cong {\mathbb G}_a$ be the image of $x^{H_1}_{\bar{\alpha}}$.
If $\alpha \in \Delta(T,B)$ then $c_\alpha = c_{\tau \alpha}  =1$, and $x^{H_1}_{\bar{\alpha}}$ is given by
\begin{equation} \label{type2_rootgroup_char_not_2}
x^{H_{1}}_{\bar{\alpha}}(y) = x_\alpha(y) \, x_{\tau \alpha}(y) \, x_{\beta}(-y^2/2).
\end{equation}

Assume ${\rm char}(k) = 2$. Then following \cite[Thm.~8.2(2)]{St}, for $\alpha \in \Psi^+$ representing $S_\alpha$, $y_\alpha$ and $y_{\tau \alpha}$ are forced to be trivial, and $y_\beta$ ranges freely, so that we may define $x^{H_1}_{2\bar{\alpha}} : \mathbb G_a \rightarrow H_1$ by
\begin{equation} \label{type2_rootgroup_char=2}
x^{H_1}_{2\bar{\alpha}}(y) = x_\beta(y).
\end{equation}
For $i \in \{ 1, 2\}$, in all cases define $U_{i\bar{\alpha}}$ to be the image of $x^{H_1}_{i\bar{\alpha}}$ when $x^{H_1}_{i\bar{\alpha}}$ can be defined; otherwise set $U_{i\bar{\alpha}} = \{1\}$. Then \cite[Theorem 8.2]{St}~shows that $U^\tau$ is the product of the subgroups $U_{\bar{\alpha}}$ (or sometimes~$U_{2\bar{\alpha}}$) corresponding to the various $S_\alpha$'s.  In particular $U^\tau$ is connected. Further, $U_{\bar{\alpha}}$ (resp. $U_{2\bar{\alpha}}$) is isomorphic to ${\mathbb G}_a$ whenever $\alpha \in \Delta(T,B)$. This holds for more general $\alpha \in \Psi^+$ too, except possibly when $S_\alpha$ has type 1: {\em a priori} $U_{\bar{\alpha}}$ could be trivial if $\alpha \notin \Delta(T,B)$ (but see Remark \ref{nontriv_rem}). Thus property (iii) holds for $I = \langle \tau \rangle$. 

Now we consider (i). The argument of the {\em Lifting step} above used only Lemma \ref{root_lem} and property (iii), and so can be used here to show that $N^\tau \cap H^{\tau,\circ} \twoheadrightarrow W^\tau$. Let $R \subset H^{\tau,\circ}$ be the unipotent radical. By \cite[Cor.~7.4]{St}, $R$ is contained in a $\tau$-stable Borel subgroup of $H^\circ$, which we may assume to be $B$; hence $R \subset U^\tau$. But by the surjectivity of $N^\tau \cap H^{\tau,\circ} \rightarrow W^\tau$ and by (iii, iv), only the trivial subgroup of $U^\tau$ can be normalized by $N^\tau \cap H^{\tau,\circ}$. Hence $R =1$ and $H^{\tau,\circ}$ is reductive.  

Since $U^\tau$ is connected it follows that $B^{\tau,\circ} = T^{\tau,\circ} \cdot U^\tau$.  Also, $H^{\tau,\circ}/ B^{\tau,\circ}$ is proper, so $B^{\tau,\circ}$ is a parabolic subgroup of $H^{\tau,\circ}$. Thus $B^{\tau,\circ}$ is a Borel subgroup of $H^{\tau,\circ}$, being a connected solvable parabolic subgroup of a reductive group. It follows that $T^{\tau,\circ}$ is a maximal torus of $H^{\tau,\circ}$.

Finally, we need to construct the splitting $X^\tau$. If ${\rm char}(k) \neq 2$, then the definition of $x^{H_1}_{\bar{\alpha}}$ above shows that the simple roots for $\Psi(H^{\tau,\circ}, T^{\tau,\circ})$ are the averages $\bar{\alpha}$ of the $\tau$-orbits of the $\alpha \in \Delta(T, B)$. For a simple root $\alpha' \in \Delta(T^{\tau,\circ}, B^{\tau,\circ})$, let $$X_{\alpha'} := \sum_{\underset{\bar{\alpha} = \alpha'}{\alpha \in \Delta(T,B)}} X_\alpha.$$ 
One can check by taking differentials of (\ref{type1_rootgroup}) and (\ref{type2_rootgroup_char_not_2}) that $X_{\alpha'} \in {\rm Lie}(H^{\tau,\circ})_{\alpha'}$, and so $X = \sum_{\alpha'} X_{\alpha'}$ gives the desired splitting. If ${\rm char}(k) = 2$, we have to be more careful: ${\rm Lie}(H^{\tau, \circ})$ can be smaller than ${\rm Lie}(H^\circ)^\tau$, and in fact when $S_\alpha$ is Type 2, $X_\alpha + X_{\tau \alpha}$ will not belong to ${\rm Lie}(H^{\tau,\circ})$. Nevertheless, we can define $X^\tau$ to be the splitting corresponding to the collection of root-group homomorphisms
\begin{equation} \label{new_splitting}
\{ x^{H_{1}}_{\bar{\alpha}}(y)\} \bigcup \{ x^{H_{1}}_{2\bar{\alpha}}(y) \}
\end{equation}
where the first (resp.~second) collection in the union is indexed by the Type 1 (resp.~Type 2) subsets $S_\alpha$ (for $\alpha \in \Delta(T, B)$). 

It remains to prove (ii) when $I = \langle \tau \rangle$. First assume ${\rm char}(k) \neq 2$. Then $(\Psi^\tau)_{\rm red}$ is a reduced root system with simple roots $\Delta^\tau = \{ \bar{\alpha}  ~| ~ \alpha \in \Delta \}$.  But $\Psi(H^{\tau, \circ}, T^{\tau, \circ})$ is also a reduced root system and we saw in the proof of (i,iii) above that it also has $\Delta^\tau$ as its set of simple roots.  Hence $(\Psi^{\tau})_{\rm red} = \Psi(H^{\tau, \circ}, T^{\tau, \circ})$.  Next assume ${\rm char}(k) = 2$.  Now $(\Psi^\tau)^{\rm red}$ is a reduced root system with simple roots $\{ \bar{\alpha} \} \cup \{ 2\bar{\alpha}\}$ where the first (resp.~second) collection in the union is indexed by Type 1 (resp.~Type 2) subsets $S_\alpha$ (for $\alpha \in \Delta$). We saw above that this is precisely the set of simple roots for $\Psi(H^{\tau, \circ}, T^{\tau, \circ})$; hence $(\Psi^\tau)^{\rm red} = \Psi(H^{\tau, \circ}, T^{\tau, \circ})$.  In particular, as $\bar{\alpha} \in (\Psi^{\tau})^{\rm red}$ whenever $\alpha \in \Psi^+$ represents a Type 1 $S_\alpha$, we now see that $U_{\bar\alpha} \cong {\mathbb G}_a$ for such $\alpha$'s (cf.~Remark \ref{nontriv_rem}).

Thus we have proved (i-iii) hold when $I = \langle \tau \rangle$.  Note again that when $|\tau|$ is odd $\Psi^\tau$ is reduced.

Now suppose $I = S_3$, which will arise in the same way as in the proof of Lemma \ref{root_lem}.  Write $I = \langle \tau_1, \tau_2 \rangle$, where $\tau_1$ generates the normal subgroup of order $3$, and $\tau_2$ is of order $2$. Then by applying the above argument first with $\tau = \tau_1$ and then with $\tau = \tau_2$ (note that $\tau_2$ fixes the splitting $X^{\tau_1} = X$), we see that (i-iii) also hold in this case. 

Now consider the most general case, where $I$ is arbitrary. Let $Z$ denote the center of $H^\circ$. We have a short exact sequence 
\begin{equation} \label{adj_exact_seq1}
1 \rightarrow Z^I \rightarrow (H^\circ)^I \rightarrow (H^\circ_{\rm ad})^I \rightarrow H^1(I, Z).
\end{equation}
As $Z$ has finite $|I|$-torsion, we see $H^1(I,Z)$ is finite and thus $[(H^\circ)^I]^\circ$ surjects onto $[(H^\circ_{\rm ad})^I]^\circ$.  So we have an exact sequence for $H$
\begin{equation} \label{adj_exact_seq2}
1 \rightarrow Z^I \cap H^{I,\circ} \rightarrow H^{I,\circ} \rightarrow (H^\circ_{\rm ad})^{I, \circ} \rightarrow 1.
\end{equation}
This exact sequence shows that $H^{I, \circ}$ is reductive if $(H^\circ_{\rm ad})^{I,\circ}$ is reductive. Similarly, (i-iii) for $H^\circ$ follow formally from (i-iii) for $H_{\rm ad}^\circ$.

Thus, we are reduced to assuming $H = H^\circ_{\rm ad}$.  Then $H$ is a product of simple groups, which are permuted by $I$ and which each carry an action by the stabilizer subgroup of $I$.  We may therefore assume $H$ is simple, and the classification shows we may assume $I = \mathbb Z/2\mathbb Z$, $\mathbb Z/3\mathbb Z$, or $S_3$.  Each of these cases was handled above, and we conclude that (i-iii) indeed hold for adjoint groups.  
\end{proof}

\begin{Remark}
Some of Proposition \ref{Steinberg_style_prop} appears in \cite[Lemma A.1]{Ri}, but with the unnecessary assumption that the order $|I|$ is prime to ${\rm char}(k)$.  The latter assumption is indeed necessary to prove that $H^I$ is reductive, when $I$ is finite but is not assumed to fix any Borel pair $(T,B)$ (see \cite[Thm.~2.1, Rem.~3.5]{PY}, on which \cite{Ri} relies).  
\end{Remark}

\begin{lemma} \label{Z->>pi_0}

Assume $H, B, T, X,$ and $I$ are as in Proposition \ref{Steinberg_style_prop}. Let $Z$ denote the center of $H^\circ$. 
\begin{enumerate}
\item[(i)] Suppose $(H^\circ_{\rm ad})^I$ is connected. Then the natural map $Z^I \rightarrow \pi_0 \, (H^\circ)^I$ is surjective.
\item[(ii)] Let $T_{\rm ad}$ denote the image of $T$ in $(H^\circ)_{\rm ad}$. If $(T_{\rm ad})^I$ is connected, then $Z^I (H^\circ)^{I,\circ} = (H^\circ)^I$.
\end{enumerate}
\end{lemma}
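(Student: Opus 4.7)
The plan is to deduce both parts directly from the machinery already assembled in Proposition \ref{Steinberg_style_prop}, together with the exact sequence (\ref{adj_exact_seq1}) used in its proof.

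For (i), recall from the end of the proof of Proposition \ref{Steinberg_style_prop} that we have the exact sequence
\[
1 \to Z^I \to (H^\circ)^I \to (H^\circ_{\rm ad})^I \to H^1(I,Z)
\]
and that $(H^\circ)^{I,\circ}$ surjects onto $(H^\circ_{\rm ad})^{I,\circ}$. Under the hypothesis $(H^\circ_{\rm ad})^I = (H^\circ_{\rm ad})^{I,\circ}$, any $h \in (H^\circ)^I$ has the same image in $(H^\circ_{\rm ad})^I$ as some $h_0 \in (H^\circ)^{I,\circ}$; then $z := h h_0^{-1} \in Z^I$ lies in the same connected component of $(H^\circ)^I$ as $h$, so $Z^I$ hits every class in $\pi_0(H^\circ)^I$. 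Note that this argument in fact yields the slightly stronger fact $(H^\circ)^I = Z^I \cdot (H^\circ)^{I,\circ}$ under the hypothesis of (i).

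For (ii), the idea is to reduce to (i) by checking that $(H^\circ_{\rm ad})^I$ is connected. The $I$-action on $H^\circ$ descends to $H^\circ_{\rm ad}$ and preserves the image splitting $(T_{\rm ad}, B_{\rm ad}, X_{\rm ad})$ (principal nilpotency is preserved under the differential of the adjoint quotient). Proposition \ref{Steinberg_style_prop}(d) applied to $H^\circ_{\rm ad}$ thus furnishes a bijection $\pi_0 (T_{\rm ad})^I \,\widetilde{\rightarrow}\, \pi_0 (H^\circ_{\rm ad})^I$, and the connectedness assumption on $(T_{\rm ad})^I$ then forces $(H^\circ_{\rm ad})^I$ to be connected. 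Part (i), in the stronger form noted above, now gives $(H^\circ)^I = Z^I \cdot (H^\circ)^{I,\circ}$, as required.

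No real obstacle is anticipated here: Proposition \ref{Steinberg_style_prop} has done all the heavy lifting, and what remains for the present lemma is only the verification that the splitting descends to $H^\circ_{\rm ad}$ and the short exact-sequence chase appearing in (i).
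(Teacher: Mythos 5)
Your proof is correct and takes essentially the same approach as the paper: part (i) is the exact-sequence chase showing $(H^\circ)^I = Z^I \cdot (H^\circ)^{I,\circ}$ using that $(H^\circ)^{I,\circ}$ surjects onto $(H^\circ_{\rm ad})^{I,\circ} = (H^\circ_{\rm ad})^I$, and part (ii) reduces to (i) via Proposition \ref{Steinberg_style_prop}(d) applied to $H^\circ_{\rm ad}$. The paper's own proof is just a terse version of what you wrote; you have filled in exactly the right details.
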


\begin{proof}
For part (i), the key point is that $H^{I, \circ}$ surjects onto $(H_{\rm ad}^\circ)^I$ by (\ref{adj_exact_seq2}). Part (ii) follows immediately from (i) and Proposition \ref{Steinberg_style_prop}(d).
\end{proof}

\begin{lemma} \label{tau_conj_lem}
Suppose $H, B, T$ are as in Proposition \ref{Steinberg_style_prop}, and assume $I = \langle \tau \rangle$ fixes $B,T$ but not necessarily $X$. Also assume ${\rm char}(k)= 0$ and that $H$ is connected. In the group $H \rtimes \langle \tau \rangle$, the set  $(H \rtimes \tau)_{\rm ss}$ of semisimple elements in the coset $H \rtimes \tau$ is the set of $H$-conjugates of $T \rtimes \tau$.
\end{lemma}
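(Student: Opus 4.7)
The plan is to prove the two inclusions separately. For the containment of the right-hand side in the left-hand side, since semisimplicity is invariant under $H$-conjugation, it suffices to check that every element of $T \rtimes \tau$ is semisimple. Letting $n$ be the order of $\tau$, a direct computation in $H \rtimes \langle \tau \rangle$ yields
\[(t \rtimes \tau)^n = t \cdot \tau(t) \cdot \tau^2(t) \cdots \tau^{n-1}(t) \in T,\]
which is semisimple because it lies in the torus $T$. The hypothesis $\mathrm{char}(k) = 0$ ensures that Jordan decomposition commutes with taking powers and that unipotent elements are torsion-free, so an element whose $n$-th power is semisimple is itself semisimple; hence $t \rtimes \tau$ is semisimple.

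For the reverse containment, let $\sigma = h \rtimes \tau$ be semisimple in $H \rtimes \tau$. Then $\mathrm{Ad}(\sigma)$ acts semisimply on $\mathrm{Lie}(H)$, so conjugation by $\sigma$ is a finite-order semisimple automorphism of the connected reductive group $H$. By Steinberg's fixed-point theorem for semisimple automorphisms of connected reductive groups \cite{St}, there exists a Borel pair $(T', B')$ of $H$ stabilized by $\sigma$. Because $H$ acts transitively on its Borel pairs, I may choose $h_0 \in H$ with $h_0(T', B')h_0^{-1} = (T, B)$; then $h_0 \sigma h_0^{-1} = g \rtimes \tau$ stabilizes the standard pair $(T, B)$.

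To finish, I will show $g \in T$. Stabilization of $T$ under conjugation by $g \rtimes \tau$ amounts to $g\tau(T)g^{-1} = T$, and since $\tau(T) = T$ by hypothesis this forces $g \in N_H(T)$. Stabilization of $B$ likewise gives $g \in N_H(B) = B$. A standard argument shows $B \cap N_H(T) = T$: any element $u$ of the unipotent radical $U$ of $B$ normalizing $T$ would define a morphism $U \to \mathrm{Aut}(T)$ from a connected group to a discrete one, forcing $u$ to centralize $T$ and thus to lie in $T \cap U = \{1\}$. Therefore $g \in T$ and $h_0 \sigma h_0^{-1} \in T \rtimes \tau$, as required.

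The main obstacle is the invocation of Steinberg's fixed-point theorem for semisimple automorphisms of connected reductive groups; the remainder of the argument consists of standard manipulations with Borel subgroups, tori, and Jordan decomposition in characteristic zero.
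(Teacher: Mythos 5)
Your proof is correct. Both directions check out, and the argument rests on the same underlying tool as the paper's proof — Steinberg's results on semisimple endomorphisms — but you deploy it a bit differently. The paper first cites \cite[7.3]{St} to $\tau$-conjugate $h$ into $B$, and only then applies \cite[Theorem~7.5]{St} to the connected solvable group $B$ to produce an ${\rm Int}(h\tau)$-stable maximal torus $T'\subset B$, from which it extracts $b^{-1}h\tau(b)\in N\cap B=T$. You instead apply the Steinberg fixed-point theorem directly to $H$ itself, obtaining a $\sigma$-stable Borel pair $(T',B')$, then conjugate that pair to the standard one and verify $g\in N_H(T)\cap B = T$. The two routes are essentially equivalent in strength (and both quote \cite{St}); yours is perhaps slightly more streamlined since it avoids the preliminary step of conjugating $h$ into $B$, at the cost of needing the "fixed Borel" part of the theorem in addition to the "fixed torus" part. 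One small imprecision: you assert that ${\rm Int}(\sigma)$ has "finite order," but this is not true in general (only the image of $\sigma$ in the outer automorphism group has finite order; $\sigma^n$ lies in $H$ but need not be central). Fortunately the argument never uses finite order, only the semisimplicity of ${\rm Ad}(\sigma)$, so this does not affect the validity of the invocation of Steinberg's theorem.
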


\begin{proof}
Conjugates of elements in $T \rtimes \tau$, are semi-simple since $\tau$ has finite order and ${\rm char}(k) = 0$.  Conversely, suppose $h\tau$ is semi-simple. Since $H$ is connected, \cite[7.3]{St} ensures that some $\tau$-conjugate of $h$ lies in $B$; hence we may assume $h \in B$. Now by \cite[Theorem 7.5]{St} applied to the (disconnected) group $B \rtimes \langle \tau \rangle$, ${\rm Int}(h\tau)$ fixes a maximal torus $T' \subset B$.  Write $T' = bT b^{-1}$ for some $b \in B$.  We obtain $b^{-1}h \tau(b) \in N \cap B =T$, thus $h\tau$ is $H$-conjugate to $T \rtimes \tau$.
\end{proof}

\begin{Remark} For applications in the rest of this article, we only need the case $k = \mathbb C$ of these results, and so strictly for the present purposes the above exposition could have been shortened somewhat. However, eventually one hopes to develop a geometric Satake isomorphism for general groups {\em and} for coefficients in arbitrary fields $k$ (or even in arbitrary commutative rings; see \cite{MV}) and in those more general situations the above results will be needed.
\end{Remark}

We close this section with a lemma that will be needed for the proof of Theorem \ref{explicit_S(G)} to be given in $\S$\ref{pf_exp_S(G)_sec}. We keep the notation and hypotheses of Proposition \ref{Steinberg_style_prop}, except we assume ${\rm char}(k) = 0$ and $I = \langle \tau \rangle$. Fix $\lambda \in k^\times$ which is {\em not a root of unity}. As in the introduction, let $\mathcal P^\tau_{\rm reg}(H)$ denote the set of pairs $(h \rtimes \tau, e) \in [H \rtimes \tau]_{\rm ss} \times \mathcal N(H)$ with (i) ${\rm Ad}(h \rtimes \tau)(e) = \lambda e$, (ii) $\tau(e) = e$, and (iii) $e$ is a {\em principal} nilpotent element in ${\rm Lie}(H^\circ)$.  Note that $H$ acts on $\mathcal P^\tau_{\rm reg}(H)$ by the formula $g_1\cdot (g\rtimes \tau, e) = (g_1g\tau(g_1)^{-1} \rtimes \tau, {\rm Ad}(g_1)(e))$.  

Suppose $(g \rtimes \tau,e) \in \mathcal P^\tau_{\rm reg}(H)$. Associated to $e \in {\rm Lie}(H)$ is a uniquely determined Borel subgroup $B_e \subset H$, defined as follows (see \cite[$\S5.7$]{Ca}).  By the Jacobson-Morozov theorem, $e$ belongs to an ${\mathfrak sl}_2$-triple $\{ e, f, h \}$. This gives a copy of ${\mathfrak sl}_2$ inside ${\rm Lie}(H)$, well-defined up to $C_{H^\circ}(e)$-conjugacy. We can lift the Lie-algebra embedding ${\mathfrak sl}_2 \hookrightarrow {\rm Lie}(H)$ to a group embedding ${\rm SL}_2 \hookrightarrow H^\circ$, also well-defined up to $C_{H^\circ}(e)$-conjugacy. Consider the cocharacter $\gamma$ given by $\lambda \mapsto \begin{bmatrix} \lambda & 0 \\ 0 & \lambda^{-1} \end{bmatrix}$ (viewed inside $H^\circ$). Let $T$ be a maximal torus of $H^\circ$ containing $\gamma(k^\times)$. Define 
$$
P_e = \langle T, U_\alpha; \langle \alpha, \gamma \rangle \geq 0 \rangle.
$$
Then \cite[Prop.~5.7.1]{Ca} shows that $P_e$ is a well-defined parabolic subgroup of $H^\circ$ determined by $e$, and that $C_{H^\circ}(e) \subseteq P_e$.\footnote{Strictly speaking, \cite[Prop.~5.7.1]{Ca} pertains only to the adjoint group $(H^\circ)_{\rm ad}$, but the same proof applies.} In our case $e$ and hence the semisimple element $h$ is regular, meaning $\gamma$ is also regular. This implies that $P_e$ is a Borel subgroup, which we henceforth denote by $B_e$. Note that $B_e$ is preserved by $\tau$. Let $U_e$ denote the unipotent radical of $B_e$.

\begin{lemma} \label{nilpotent_lem} Assume ${\rm char}(k) = 0$, $I = \langle \tau \rangle$, and $\lambda \in k^\times$ is not a root of unity. Assume further that $H = Z(H) H^\circ$. Let $(g \rtimes \tau,e) \in \mathcal P^\tau_{\rm reg}(H)$, and suppose $T \subset B_e$ is a maximal torus containing $\gamma(k^\times)$ as above, so that we can write $e = \sum_{\alpha \in \Delta_e} X_\alpha$, where $\Delta_e := \Delta(T, B_e)$. Let $\delta \in T$ be an element such that ${\rm Ad}(\delta)(e) = \lambda e$. Then the $U_e$-orbit of $(g \rtimes \tau,e)$ contains an element of the form $(\delta z \rtimes \tau, e)$, where $z \in Z(H)$.
\end{lemma}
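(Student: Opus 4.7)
The plan is to reduce the statement to solving a linear equation on the Lie algebra of $C_{U_e}(e)$, and then verify its solvability via an eigenvalue computation. First, since ${\rm Ad}(g\rtimes\tau)(e) = \lambda e = {\rm Ad}(\delta)(e)$ and $\tau(e) = e$, we have $\delta^{-1}g \in C_H(e) = Z(H)\cdot C_{H^\circ}(e)$. By Kostant's theorem, $C_{H^\circ}(e) = Z(H^\circ)\cdot C_{U_e}(e)$ with $C_{U_e}(e)$ abelian; combined with $Z(H^\circ)\subseteq Z(H)$, this produces a unique decomposition $g = \delta z v$ with $z \in Z(H)$ and $v \in C_{U_e}(e)$, uniqueness coming from $Z(H) \cap C_{U_e}(e) = \{1\}$ (semisimple versus unipotent, in characteristic zero).

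For $u \in U_e$ the action gives $u \cdot (g\rtimes\tau, e) = (ug\tau(u)^{-1}\rtimes\tau,\, {\rm Ad}(u)(e))$, so preservation of the second coordinate forces $u \in C_{U_e}(e)$; this subgroup is preserved by $\tau$ (which fixes $e$ and $B_e$) and by $\sigma := {\rm Ad}(\delta)$ (which normalizes $U_e$ and scales $e$). Substituting $g = \delta z v$ and using centrality of $z$, the condition $ug\tau(u)^{-1} = \delta z'$ reduces to $\sigma^{-1}(u)\cdot v \cdot \tau(u)^{-1} = z^{-1}z'$; the left side lies in the abelian unipotent $C_{U_e}(e)$ while the right is in $Z(H)$, so both must equal $1$, forcing $z' = z$ and leaving
\[
\tau(u)\cdot \sigma^{-1}(u)^{-1} = v \qquad \text{in } C_{U_e}(e).
\]
The exponential identifies $C_{U_e}(e)$ with the vector space $\mathfrak{a} := {\rm Lie}(C_{U_e}(e))$ (abelian unipotent in characteristic zero), so solvability of the equation for every $v$ is equivalent to surjectivity, hence (by finite-dimensionality) injectivity, of the linear map $\tau - \sigma^{-1}$ on $\mathfrak{a}$, i.e., to showing that ${\rm Ad}(\delta)\tau$ has no nonzero fixed vector in $\mathfrak{a}$.

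This eigenvalue computation is the main (and essentially only non-formal) step. First, ${\rm Ad}(\delta)$ and $\tau$ commute on $\mathfrak{a}$: applying $\tau$ to ${\rm Ad}(\delta)(e) = \lambda e$ yields ${\rm Ad}(\tau(\delta))(e) = \lambda e$, so $\delta^{-1}\tau(\delta) \in C_{H^\circ}(e) = Z(H^\circ)\cdot C_{U_e}(e)$, and its $C_{U_e}(e)$-component acts trivially on $\mathfrak{a}$ by abelianness; hence ${\rm Ad}(\tau(\delta)) = {\rm Ad}(\delta)$ on $\mathfrak{a}$. Second, by Jacobson--Morozov $\mathfrak{a}$ is the direct sum of the highest-$h$-weight lines of the non-trivial ${\mathfrak{sl}}_2$-irreducibles in ${\rm Lie}(H^\circ)$, one per exponent $d_i \geq 1$ of $[H^\circ, H^\circ]$. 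Writing $\delta = \gamma(\mu)\cdot \delta_1$ with $\mu^2 = \lambda$ and $\delta_1 \in Z(H^\circ)^\circ$ (possible since $\alpha(\delta) = \lambda = \mu^{\langle\alpha,\gamma\rangle}$ for $\alpha \in \Delta_e$), $\delta_1$ acts trivially on the non-central part of ${\rm Lie}(H^\circ)$ while $\gamma(\mu)$ scales the highest-weight-$2d_i$ line by $\mu^{2d_i} = \lambda^{d_i}$. Thus ${\rm Ad}(\delta)$ has eigenvalues $\{\lambda^{d_i}\}$ on $\mathfrak{a}$, and since $\tau$ is of finite order its eigenvalues on the simultaneous diagonalization are roots of unity $\zeta$, so the eigenvalues of ${\rm Ad}(\delta)\tau$ on $\mathfrak{a}$ have the form $\lambda^{d_i}\zeta$. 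The hypothesis that $\lambda$ is not a root of unity forces $\lambda^{d_i}\zeta \neq 1$ for all $d_i \geq 1$ and roots of unity $\zeta$, which yields the required injectivity and completes the proof.
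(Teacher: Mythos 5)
Your proof is correct, and it takes a genuinely different route from the paper's. The paper begins by applying the twisted Bruhat decomposition to write $g = u^{-1}t\tau(u)$ with $u\in U_e$, $t\in T$ (which tacitly uses $\tau$-stability of $T$), expands $\mathrm{Ad}(u)(e)$ and $\mathrm{Ad}(\tau(u))(e)$ in root coordinates, deduces first that $t = \delta z$, and then shows by an explicit height-graded matrix calculation that all the coefficients $a_\beta$ vanish because $\lambda$ is not a root of unity. You instead invoke Kostant's structure theorem for the centralizer of a regular nilpotent---$C_{H^\circ}(e) = Z(H^\circ)\cdot C_{U_e}(e)$ with $C_{U_e}(e)$ connected abelian unipotent---to produce the decomposition $g = \delta z v$ directly, and then convert the whole search for $u$ into solvability of the linear equation $(\tau - \mathrm{Ad}(\delta)^{-1})(\log u) = \log v$ on $\mathfrak a = \mathrm{Lie}(C_{U_e}(e))$, establishing invertibility by the observation that $\mathrm{Ad}(\delta)\tau$ has eigenvalues $\lambda^{d_i}\zeta$ (exponents $d_i\geq 1$, roots of unity $\zeta$), none of which is $1$. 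Both arguments deploy the hypothesis on $\lambda$ at the same point, but your version is more structural, avoids coordinates, and is somewhat more robust---for instance it never needs $T$ to be $\tau$-stable. Two small remarks: first, your element $\delta_1 = \delta\gamma(\mu)^{-1}$ lies in $Z(H^\circ)\cap T$ but need not lie in $Z(H^\circ)^\circ$; this is harmless since $\mathrm{Ad}(\delta_1)=\mathrm{id}$ holds for any $\delta_1\in Z(H^\circ)$. Second, your assertion $Z(H^\circ)\subseteq Z(H)$ is not true for general disconnected reductive $H$, but it does follow from the standing hypothesis $H = Z(H)H^\circ$ (since $Z(H^\circ)$ then commutes with all of $H = Z(H)H^\circ$), so the step is valid here; it would be worth making that dependence explicit.
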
 

\begin{proof}
First note that ${\rm Ad}(g \rtimes \tau)(e) = \lambda e$ implies ${\rm Ad}(g)(e) = \lambda e$, since $\tau$ fixes $e$.  Since $H = Z(H) H^\circ$, we may assume $g \in H^\circ$.  We have $\delta^{-1}g \in C_{H^\circ}(e) \subseteq B_e$.  Hence $g \in B_e$.  As in the proof of Lemma \ref{tau_conj_lem}, we may find $u \in U_e$ and $t \in T$ such that $g = u^{-1}t \tau(u)$. 

Recall $e = \sum_{\alpha \in \Delta_e} X_\alpha$ where $X_\alpha \neq 0,\, \forall \alpha$. Let $\beta$ range over $B_e$-positive roots with ${\rm ht}(\beta) \geq 2$, and for each choose $X_\beta \in {\rm Lie}(H^\circ)_\beta \backslash \{0 \}$.  
 We can write
\begin{align} 
{\rm Ad}(u)(e) &= e + \sum_{{\rm ht}(\beta) \geq 2} a_\beta X_\beta  \label{ad(u)_eq}\\
{\rm Ad}(\tau (u))(e) &= e + \sum_{{\rm ht}(\beta) \geq 2} a_\beta \, \tau(X_\beta)  \label{ad(tau(u))_eq} 
\end{align}
for certain scalars $a_\beta \in k$.  

Applying ${\rm Ad}(t)$ to (\ref{ad(tau(u))_eq}), we get
$$
{\rm Ad}(t \tau(u))(e) = {\rm Ad}(t)(e) + \sum_{{\rm ht}(\beta) \geq 2} a_{\beta}\, (\tau^{-1}\beta)(t)\, \tau(X_\beta).
$$
Applying ${\rm Ad}(u^{-1})$ to this and using the hypothesis on $g$ yields
$$
\lambda e = {\rm Ad}(u^{-1}t \tau(u))(e) = {\rm Ad}(t)(e) + \sum_{{\rm ht}(\beta) \geq 2} Y_\beta
$$
for certain $Y_\beta \in {\rm Lie}(H^\circ)_\beta$. So $Y_\beta = 0, \forall \beta$ and 
\begin{equation} \label{ad(t)_eq}
{\rm Ad}(t)(e) = \lambda e.
\end{equation}
As $\delta^{-1}t \in T$ and fixes $e$ under the adjoint action, we obtain $t = \delta \cdot z$ for some $z \in Z(H^\circ)$.

Our hypothesis on $g$ means that applying ${\rm Ad}(t)$ to (\ref{ad(tau(u))_eq}) yields $\lambda$ times (\ref{ad(u)_eq}).  But using (\ref{ad(t)_eq}), we see that ${\rm Ad}(t)(X_\alpha) = \lambda X_\alpha, \, \forall \alpha \in \Delta_e$, and thus we get
$$
\lambda e + \sum_{{\rm ht}(\beta) \geq 2} \lambda^{{\rm ht}(\beta)} a_{\beta} \tau(X_{\beta}) =
\lambda e + \sum_{{\rm ht}(\beta) \geq 2} \lambda a_{\beta}  X_\beta.
$$
Here we used that $\tau$ preserves heights. In fact, if we fix a height $h \geq 2$, and let $[\tau]$ denote the matrix given by $\tau$ and the basis $\{ X_\beta\}_{{\rm ht}(\beta) = h}$, we get an equality of column vectors of the form
$$
[\tau] [\lambda^h a_\beta]_\beta = [\lambda a_\beta]_\beta.
$$
Choose $N \geq 1$ with $[\tau]^N = {\rm id}$. This entails $a_\beta \lambda^{N(h-1)} = a_\beta$.  Since $\lambda$ is not a root of unity, we deduce that $a_\beta = 0$ for all $\beta$.  

Thus ${\rm Ad}(u)(e) = e$, and $u \cdot (g \rtimes \tau,e) = (\delta z \rtimes \tau, e)$, as desired.
\end{proof}

\section{The parameter space} \label{param_space_sec5}

Assume $G$ is a {\em quasi-split} connected reductive group over $F$, and fix an $F$-rational maximal torus/Borel subgroup $T \subset B$ thereof. Let $\widehat{G}$ be the complex dual group of $G$. By definition, it carries an action by the absolute Galois group $\Gamma$ over $F$, which factors through a finite quotient and fixes a splitting of the form $(\widehat{B},\widehat{T},\widehat{X})$ (cf. \cite[1.5]{cusptemp}), where we may assume $\widehat{T}$ is the complex dual torus for $T$. Note that since $G$ is quasi-split with $\Gamma$-fixed pair $(B,T)$, the $\Gamma$-action on $\widehat{T}$ inherited from $\widehat{G}$ agrees with that derived from the $\Gamma$-action on $X_*(T) = X^*(\widehat{T})$.\footnote{See the even more pedantic discussion of dual groups in the second paragraph of section \ref{trans_sec}.} We shall use this remark below, applied to the torus $T_{\rm sc}$.

\smallskip

The group $\widehat{G}^{I_F}$ is reductive by Proposition \ref{Steinberg_style_prop} and carries an action by $\tau = \Phi$ which fixes the splitting $(\widehat{T}^{I_F, \circ}, \widehat{B}^{I_F, \circ}, \widehat{X})$ (see Proposition \ref{Steinberg_style_prop}(a)).  Write $I$ for $I_F$ in what follows.

By Lemma \ref{tau_conj_lem} applied with $H = \widehat{G}^{I,\circ}$, we have a surjection 
$$
\widehat{T}^{I,\circ}  \rightarrow [\widehat{G}^{I,\circ} \rtimes \Phi]_{\rm ss}/ \widehat{G}^{I,\circ}.
$$
Let $\widehat{Z} = Z(\widehat{G})$. Since the torus $T_{\rm sc}$ in $G_{\rm sc}$ is $I$-induced, its dual torus $\widehat{T}_{\rm ad}$ is also $I$-induced, and so $(\widehat{T}_{\rm ad})^I$ is connected. Then from Lemma \ref{Z->>pi_0} (ii) with $H = \widehat{G}$, we see $\widehat{Z}^I \cdot \widehat{G}^{I,\circ} = \widehat{G}^I$.

Now multiplying on the left by $\widehat{Z}^I$, the above surjection gives rise to a surjection
\begin{equation} \label{1st_eq}
(\widehat{T}^{I})_{\Phi} \rightarrow [\widehat{G}^I \rtimes \Phi]_{\rm ss}/ \widehat{G}^I.
\end{equation}
Let $\widehat{N} = N(\widehat{G}, \widehat{T})$ and $\widehat{W} = W(\widehat{G}, \widehat{T})$. By Proposition \ref{Steinberg_style_prop}(c), $\widehat{W}^{I} = W(\widehat{G}^{I,\circ},\widehat{T}^{I,\circ})$, and $(\widehat{W}^{I})^{\Phi} = W(\widehat{G}^{\Gamma ,\circ}, \widehat{T}^{\Gamma, \circ})$.  Further, we see that $(\widehat{N}^I)^\Phi = \widehat{N}^\Gamma$ surjects onto $(\widehat{W}^I)^\Phi = \widehat{W}^\Gamma$.  We thus have a well-defined surjective map
\begin{equation} \label{2nd_eq}
(\widehat{T}^I)_\Phi/ \widehat{W}^\Gamma \rightarrow [\widehat{G}^I \rtimes \Phi]_{\rm ss}/ \widehat{G}^I.
\end{equation}

\begin{prop} \label{2nd_eq_bijective} The map \textup{(}\ref{2nd_eq}\textup{)} is bijective.
\end{prop}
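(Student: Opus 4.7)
The plan is to prove injectivity of (\ref{2nd_eq}), since the surjectivity is already in hand. Suppose $t_1, t_2 \in \widehat{T}^I$ satisfy $t_2 = g t_1 \Phi(g)^{-1}$ for some $g \in \widehat{G}^I$; I want to conclude $[t_1] = [t_2]$ in $(\widehat{T}^I)_\Phi/\widehat{W}^\Gamma$. My first step is to reduce to the case where both $g \in \widehat{G}^{I,\circ}$ and $t_1, t_2 \in \widehat{T}^{I,\circ}$. Using $\widehat{G}^I = \widehat{Z}^I \cdot \widehat{G}^{I,\circ}$, I write $g = zg'$; centrality of $z$ in $\widehat{G}$ yields $t_2 = z\Phi(z)^{-1}\cdot g' t_1 \Phi(g')^{-1}$, and the factor $z\Phi(z)^{-1} \in (1-\Phi)\widehat{T}^I$ vanishes in the $\Phi$-coinvariants. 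Similarly, the bijection $\pi_0 \widehat{T}^I \widetilde\rightarrow \pi_0 \widehat{G}^I$ of Proposition \ref{Steinberg_style_prop}(d), combined with $\widehat{G}^I = \widehat{Z}^I\cdot \widehat{G}^{I,\circ}$, yields $\widehat{T}^I = \widehat{Z}^I \cdot \widehat{T}^{I,\circ}$; writing $t_1 = z_1 t_1^\circ$, centrality of $z_1$ together with Proposition \ref{Steinberg_style_prop}(b) forces $z_1^{-1}t_2 = g' t_1^\circ \Phi(g')^{-1} \in \widehat{T}^I \cap \widehat{G}^{I,\circ} = \widehat{T}^{I,\circ}$, so the assertion for $(t_1, t_2)$ reduces to the corresponding one for $(t_1^\circ, z_1^{-1}t_2)$ in $(\widehat{T}^{I,\circ})_\Phi/\widehat{W}^\Gamma$.

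The heart of the argument is then the classical fact that, in a connected reductive group $H$ with a finite-order automorphism $\Phi$ and a $\Phi$-stable maximal torus $T$, any two elements of $T$ which are $\Phi$-twisted $H$-conjugate are already $\Phi$-twisted $N_H(T)$-conjugate. My plan to establish this is to form the reductive group (with finite component group) $\widetilde H := H \rtimes \langle \Phi \rangle$, in which $s_i := t_i \rtimes \Phi$ are semisimple (both lie in the diagonalizable subgroup $T \rtimes \langle \Phi \rangle$). Steinberg's theorem then provides connected reductive centralizers $Z^\circ_{\widetilde H}(s_i) \subseteq H$, and after identifying $T^{\Phi,\circ}$ as a maximal torus of each $Z^\circ_{\widetilde H}(s_i)$, I apply conjugacy of maximal tori in $Z^\circ_{\widetilde H}(s_2)$ and then descend inside the $\Phi$-stable Levi $C_H(T^{\Phi,\circ})$ to pass from normalizing $T^{\Phi,\circ}$ to normalizing $T$. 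I expect this to be the main obstacle: while classical, the maximality claim for $T^{\Phi,\circ}$ inside the twisted centralizer and the final descent require some care when $t_1$ is not regular.

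Once $n \in N_H(T)$ with $t_2 = n t_1 \Phi(n)^{-1}$ is in hand, its image $w \in W(H, T) = \widehat{W}^I$ automatically lies in $(\widehat{W}^I)^\Phi = \widehat{W}^\Gamma$, because $n\Phi(n)^{-1} \in T$ forces $w = \Phi(w)$. Applying Proposition \ref{Steinberg_style_prop}(c) to $H = \widehat{G}^{I,\circ}$ with $\langle \Phi \rangle$ in place of $I$, I lift $w$ to $n_1 \in N_H(T)^\Phi = \widehat{N}^\Gamma$. Writing $n = n_1 s$ with $s \in T$ and using $\Phi(n_1) = n_1$, the identity $t_2 = n_1 s t_1 \Phi(s)^{-1} n_1^{-1} = w(t_1)\cdot w(s\Phi(s)^{-1})$ exhibits $t_2$ and $w(t_1)$ as differing by an element of $(1-\Phi)\widehat{T}^{I,\circ}$. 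This gives $[t_1] = [t_2]$ in $(\widehat{T}^{I,\circ})_\Phi/\widehat{W}^\Gamma$, and hence in $(\widehat{T}^I)_\Phi/\widehat{W}^\Gamma$, completing the proof.
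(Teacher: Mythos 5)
Your overall strategy is sound and the bookkeeping (reduction to $\widehat{T}^{I,\circ}$, $\widehat{G}^{I,\circ}$; the final lifting via Proposition \ref{Steinberg_style_prop}(c) and the identity $t_2 = w(t_1)\cdot w(s\Phi(s)^{-1})$) is correct. However, the route you take through the middle is genuinely different from the paper's: the paper does not pass through Steinberg's theory of semisimple centralizers at all, but instead applies the refined Bruhat decomposition of $\widehat{G}^{I,\circ}$ directly to the conjugating element $g_0$, reads off the ``$TN$''-component from uniqueness, and thereby produces at once an element of $\widehat{N}^I$ with $\Phi$-fixed Weyl image. This is both shorter and avoids the step you flag as the ``main obstacle.''

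Regarding that obstacle: as written your argument does have a real gap, which you honestly acknowledge. Neither the claim that $T^{\Phi,\circ}$ is a maximal torus of $Z^\circ_{\widetilde H}(s_i)$ nor the ``descent'' from normalizing $T^{\Phi,\circ}$ to normalizing $T$ is automatic in general (for instance, the set of $s_2$-stable maximal tori of $H$ is \emph{not} a single $Z^\circ_{\widetilde H}(s_2)$-orbit, as one sees already for $H=\mathrm{SL}_2$ and $s_2$ a regular torus element). What rescues you here is a fact you did not invoke: because $\Phi$ preserves a splitting $(T,B,X)$ of $H=\widehat{G}^{I,\circ}$ (Proposition \ref{Steinberg_style_prop}(a)), every $\Phi$-orbit of roots of $T$ has nonzero average $\bar\alpha$, and $\bar\alpha|_{T^{\Phi,\circ}}=\alpha|_{T^{\Phi,\circ}}$; hence no root of $T$ vanishes on $T^{\Phi,\circ}$, and $C_H(T^{\Phi,\circ})=T$. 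This single observation dispatches both problematic steps: it gives $C_{Z^\circ_{\widetilde H}(s_i)}(T^{\Phi,\circ})\subseteq C_H(T^{\Phi,\circ})\cap H^{{\rm Int}(s_i)}=T^\Phi$, so $T^{\Phi,\circ}$ is indeed a maximal torus of $Z^\circ_{\widetilde H}(s_i)$, and it turns your ``descent inside the Levi $C_H(T^{\Phi,\circ})$'' into the trivial remark that any $n$ normalizing $T^{\Phi,\circ}$ normalizes its centralizer $T$. Once you add this observation explicitly, your proof is complete. It is a valid alternative to the paper's Bruhat argument, and arguably more conceptual, though longer; the Bruhat route has the advantage of not needing any facts about the twisted centralizers beyond the structure theory already established in Proposition \ref{Steinberg_style_prop}.
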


\begin{proof}
It remains to prove the injectivity, which is similar to \cite[Prop.~11]{Mis}.  Suppose there exist $s, t \in \widehat{T}^I$ and $z g_0 \in Z^I \cdot \widehat{G}^{I,\circ}$ with $(zg_0)^{-1}s \Phi(zg_0) = t$.  Write $U$ for the unipotent radical of $\widehat{B}$.  Via the Bruhat decompsition write $g_0 = u_0  n_0 v_0$ where $n_0 \in \widehat{N}^I \cap \widehat{G}^{I,\circ}$, $v_0 \in U^I$, and $u_0 \in U^I \cap \, ^{n_0}\overline{U}^I$. We have
$$
s\Phi(u_0) \Phi(zn_0) \Phi(v_0) = u_0 (zn_0) v_0 t,
$$
and thus
$$
(s \Phi(u_0) s^{-1}) \cdot s\Phi(zn_0) \cdot \Phi(v_0) = u_0 \cdot (zn_0t) \cdot (t^{-1}v_0t).
$$
Uniqueness of the decomposition yields
$$
s\Phi(zn_0) = zn_0t.
$$
The image of $zn_0 \in \widehat{N}^I$ in $\widehat{W}^I$ is therefore $\Phi$-fixed, so lifts (cf. Prop. \ref{Steinberg_style_prop}) to some element $n_1 \in \widehat{N}^\Gamma$; write $zn_0 = t_1 n_1$ for some $t_1 \in \widehat{T}^I$. The resulting equation
$$
n_1^{-1}(t_1^{-1} s \Phi(t_1)) n_1 = t
$$
shows that $s$ and $t$ have the same image in $(\widehat{T}^I)_\Phi/ \widehat{W}^\Gamma$.
\end{proof}

\begin{cor} \label{alg_var_str_cor} If $G/F$ is quasi-split, the set $[\widehat{G}^{I_F} \rtimes \Phi]_{\rm ss}/\widehat{G}^{I_F}$ has the structure of an affine algebraic variety canonically isomorphic to $(\widehat{T}^{I_F})_\Phi/\widehat{W}^\Gamma$.
\end{cor}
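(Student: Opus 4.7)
The plan is straightforward: use the bijection of Proposition~\ref{2nd_eq_bijective} to transport an affine variety structure from $(\widehat{T}^{I_F})_\Phi/\widehat{W}^\Gamma$ onto $[\widehat{G}^{I_F} \rtimes \Phi]_{\rm ss}/\widehat{G}^{I_F}$, and then verify that the resulting structure is independent of the auxiliary choice of $F$-rational Borel pair $T \subset B$ in $G$ (equivalently, of $\Gamma$-fixed splitting on $\widehat G$).

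First I would argue that the source $(\widehat{T}^{I_F})_\Phi/\widehat{W}^\Gamma$ is naturally an affine variety over $\mathbb C$. The closed subgroup $\widehat T^{I_F}$ of the diagonalizable group $\widehat T$ is again diagonalizable, and the quotient by the image of $t \mapsto t\,\Phi(t)^{-1}$ is therefore diagonalizable, hence affine. The finite group $\widehat W^\Gamma$ acts algebraically on $(\widehat T^{I_F})_\Phi$ via representatives in $\widehat N^\Gamma$ supplied by Proposition~\ref{Steinberg_style_prop}(c) (applied with $I = \Gamma$), and the GIT quotient $\Spec\bigl(\mathbb C[(\widehat T^{I_F})_\Phi]^{\widehat W^\Gamma}\bigr)$ is affine. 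Declaring the bijection~(\ref{2nd_eq}) to be an isomorphism of affine varieties then transports this structure to $[\widehat G^{I_F}\rtimes\Phi]_{\rm ss}/\widehat G^{I_F}$ tautologically.

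The main remaining point, and the one I expect to require the most care, is canonicity. A different choice of $\Gamma$-stable Borel pair $(\widehat T',\widehat B')$ in $\widehat G$ carrying a $\Gamma$-fixed splitting yields a second instance of~(\ref{2nd_eq}), and the two affine structures on the target must be shown to agree. For this it suffices to check that any two such pairs are $\widehat G^\Gamma$-conjugate and that the induced isomorphism
\[
(\widehat T^{I_F})_\Phi/\widehat W^\Gamma \;\cong\; ((\widehat T')^{I_F})_\Phi/(\widehat W')^\Gamma
\]
is independent of the conjugating element. The first claim reduces, via Proposition~\ref{Steinberg_style_prop}(a) applied with $H = \widehat G$ and $I = \Gamma$, to standard conjugacy of Borel pairs in the reductive group $\widehat G^{\Gamma,\circ}$. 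The second is immediate, since two conjugating elements of $\widehat G^\Gamma$ differ by an element of $\widehat N^\Gamma$, which acts on $(\widehat T^{I_F})_\Phi$ through its image in $\widehat W^\Gamma$.
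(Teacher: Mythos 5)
Your proposal is correct and follows essentially the same route the paper takes (the paper states the corollary without explicit proof, relying on Proposition~\ref{2nd_eq_bijective} and the later canonicity discussion in $\S\ref{param_const_gen}$). One small remark: for canonicity the paper simply cites Kottwitz's result \cite[1.7]{cusptemp} that $\widehat{G}^{\Gamma}$ acts transitively on $\Gamma$-fixed splittings, which is cleaner than your reduction to conjugacy of Borel pairs in $\widehat{G}^{\Gamma,\circ}$ --- the latter requires an extra lifting step (that a Borel pair of $\widehat{G}^{\Gamma,\circ}$ determines a unique $\Gamma$-stable Borel pair of $\widehat{G}$, and that the $\widehat{G}^{\Gamma,\circ}$-conjugation between the fixed-point Borel pairs therefore carries the ambient $\Gamma$-stable pairs to each other). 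You should either supply that lifting argument or invoke the transitivity on $\Gamma$-fixed splittings directly, as the paper does; otherwise the argument is complete.
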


\begin{Remark} \label{dual_innertwist_rem}
If $G$ is not quasi-split over $F$, then as in $\S\ref{trans_sec}$, we consider it as an inner form $(G, \Psi)$ of a group $G^*$ which is quasi-split over $F$.  Then $\Psi$ induces a canonical $\Gamma$-isomorphism of based root systems $\psi: \Psi_0(G) ~ \widetilde{\rightarrow} ~ \Psi_0(G^*)$. Following \cite[$\S1$]{cusptemp}, recall that a dual group for $G^*$ is a pair $(\widehat{G^*}, \iota)$, where $\widehat{G^*}$ is a connected reductive group over $\mathbb C$, where $\iota: \Psi_0(G^*)^\vee ~ \widetilde{\rightarrow} ~ \Psi_0(\widehat{G^*})$ is an $\Gamma^*$-isomorphism of based root systems, and where $\Gamma^*$ fixes some splitting for $\widehat{G^*}$.\footnote{We write $\Gamma^*$, $I^*_F$, etc., to indicate Galois actions on $G^*$.} If $(\widehat{G^*}, \iota)$ is a dual group for $G^*$, then $(\widehat{G^*}, \iota \circ \check{\psi}^{-1})$ is a dual group for $G$. Thus $(G^*, \Psi)$ gives rise to canonical identifications $^LG^* = \, ^LG$ and 
\begin{equation} \label{paramspace_twist}
[\widehat{G^*}^{I^*_F} \rtimes \Phi^*]_{\rm ss}/\widehat{G^*}^{I^*_F}  = [\widehat{G}^{I_F} \rtimes \Phi]_{\rm ss}/\widehat{G}^{I_F}.
\end{equation}
Thus the right hand side inherits the structure of an affine algebraic variety from the left hand side.
\end{Remark}

\section{Construction of parameters: quasi-split case} \label{param_space_sec6}

Now again assume $G/F$ is quasi-split. Let $A$ be a maximal $F$-split torus in $G$, and suppose $T = {\rm Cent}_G(A)$; let $W = W(G, T)$ and recall that since $G$ is quasi-split, $W^\Gamma$ is the relative Weyl group $W(G, A)$.  There is a $\Gamma$-equivariant isomorphism $W \cong \widehat{W}$. Putting this together with Proposition \ref{2nd_eq_bijective}  yields the following result.

\begin{prop} \label{3rd_eq_prop} Assume $G$ is quasi-split over $F$. There is a natural bijection
\begin{equation} \label{3rd_eq}
(\widehat{T}^{I_F})_{\Phi}/ W(G,A) ~ \widetilde{\rightarrow} ~ [\widehat{G}^{I_F} \rtimes \Phi]_{\rm ss}/ \widehat{G}^{I_F}.
\end{equation}
\end{prop}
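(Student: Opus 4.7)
The plan is to reduce the statement directly to Proposition \ref{2nd_eq_bijective}, the only new ingredient being the identification of the group $\widehat{W}^\Gamma$ appearing on the left-hand side of (\ref{2nd_eq}) with the relative Weyl group $W(G,A)$. All of the structural work on $[\widehat{G}^{I_F} \rtimes \Phi]_{\rm ss}/\widehat{G}^{I_F}$ has already been done.

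First I would invoke the canonical $\Gamma$-equivariant isomorphism $W(G,T) \cong W(\widehat{G}, \widehat{T})$ of absolute Weyl groups. This comes from the duality of based root data used to define $(\widehat{G}, \widehat{T}, \widehat{B})$: the hypothesis that $G$ is quasi-split with $\Gamma$-stable Borel pair $(B,T)$ ensures that the $\Gamma$-action on $\widehat{T}$ inherited from the splitting of $\widehat{G}$ agrees with the dual of the $\Gamma$-action on $T$ (as pointed out in the first paragraph of $\S\ref{param_space_sec5}$), so the isomorphism $W \cong \widehat{W}$ is $\Gamma$-equivariant. In particular, $\widehat{W}^\Gamma \cong W^\Gamma$ as abstract groups.

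Next comes the arithmetic input: for a quasi-split group $G$ with a maximal $F$-split torus $A$ contained in the $F$-rational maximal torus $T = {\rm Cent}_G(A)$, one has
\[
W(G,A) \;=\; W(G,T)^\Gamma
\]
(a standard consequence of Borel--Tits theory, valid precisely because $G$ is quasi-split; see e.g.~the surjectivity of $N_G(T)(F) \to W(G,T)^\Gamma$ supplied by the existence of $F$-rational lifts of $\Gamma$-invariant Weyl elements in the quasi-split setting). Composing these two identifications yields $W(G,A) = W^\Gamma \cong \widehat{W}^\Gamma$.

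Substituting this identification into the bijection (\ref{2nd_eq}) of Proposition \ref{2nd_eq_bijective} gives the desired bijection
\[
(\widehat{T}^{I_F})_\Phi / W(G,A) \;\widetilde{\rightarrow}\; [\widehat{G}^{I_F} \rtimes \Phi]_{\rm ss}/\widehat{G}^{I_F}.
\]
There is no real obstacle here; the entire content of the proposition has been absorbed into Proposition \ref{2nd_eq_bijective} and the quasi-split equality $W(G,A) = W(G,T)^\Gamma$.
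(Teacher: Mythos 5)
Your proof is correct and follows essentially the same route as the paper: identify $W(G,A)$ with $W(G,T)^\Gamma$ via quasi-splitness, transport through the $\Gamma$-equivariant isomorphism $W \cong \widehat{W}$, and then invoke Proposition \ref{2nd_eq_bijective}. Nothing to add.
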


Let $J \subset G(F)$ be any parahoric subgroup, and let $\pi \in \Pi(G/F, J)$. By \cite[$\S11.5$]{stable}, there exists a weakly unramified character $\chi \in (\widehat{T}^{I_F})_\Phi/W(G,A)$ such that $\pi$ is an irreducible subquotient of the normalized induction $i^G_B(\chi)$. 
\begin{defn} \label{qs_key_defn}
Define $s(\pi) \in S(G) := [\widehat{G}^{I_F} \rtimes \Phi]_{\rm ss}/\widehat{G}^{I_F}$ to be the image of $ \chi \in (\widehat{T}^{I_F})_\Phi/W(G,A)$ under the bijection (\ref{3rd_eq}).\footnote{For the independence of the map $\pi \mapsto s(\pi)$ from auxiliary choices such as $A$, see the discussion of (\ref{key_defn}).} 
\end{defn} 

We have the Bernstein isomorphism of \cite[11.10.1]{stable}\footnote{We use the letter $S$ for this map, because when $J = K$ it is just the Satake isomorphism (\ref{Sat_isom}).}
\begin{equation} \label{Bern_isom}
S: \mathcal Z(G(F),J) ~ \widetilde{\rightarrow} ~ \mathbb C[(\widehat{T}^{I_F})_\Phi/{W(G,A)}].
\end{equation}
By \cite[$\S11.8$]{stable}, $z \in \mathcal Z(G(F), J)$ acts on $i^G_B(\chi)^J$ by the scalar $S(z)(\chi)$. 
Then we have the following characterization of $s(\pi)$: {\em any element $z \in \mathcal Z(G(F), J)$ acts on $\pi^J$ by the scalar $S(z)(s(\pi))$}.

\begin{lemma} \label{comp_lem} The map $\pi \mapsto s(\pi)$ is compatible with change of level $J' \subset J$.
\end{lemma}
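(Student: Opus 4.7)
The plan is to reduce the assertion to the observation that Definition~\ref{qs_key_defn} depends on $\pi$ only through its supercuspidal support, and not on the auxiliary parahoric level $J$.

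First I would remark that if $J' \subseteq J$ are two parahoric subgroups of $G(F)$, then $\pi^{J'} \supseteq \pi^J$ for any smooth representation $\pi$, so $\Pi(G/F, J) \subseteq \Pi(G/F, J')$. Hence for $\pi \in \Pi(G/F, J)$ the Satake parameter $s(\pi)$ is already defined at both levels, and the task is to show the two resulting elements of $S(G)$ coincide.

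The main step is to recall from \cite[$\S11.5$]{stable} that the supercuspidal support of any $\pi \in \Pi(G/F, J')$ is the $W(G,A)$-orbit of a (well-defined up to $W(G,A)$) weakly unramified character $\chi \in (\widehat{T}^{I_F})_\Phi/W(G,A)$, characterized by the property that $\pi$ occurs as an irreducible subquotient of $i^G_B(\chi)$. This $W(G,A)$-orbit depends only on the representation $\pi$, not on the parahoric used to realize it as an element of $\Pi(G/F, -)$. Definition~\ref{qs_key_defn} then produces $s(\pi) \in S(G)$ as the image of $\chi$ under the canonical bijection~(\ref{3rd_eq}), and this map itself involves no level-dependent data. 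It follows immediately that the parameters obtained at levels $J$ and $J'$ coincide.

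There is essentially no obstacle here; the compatibility is built into the definition. For a conceptual double-check one can also invoke the Bernstein-isomorphism characterization noted after Definition~\ref{qs_key_defn}: the action of any element of the Bernstein center of $G(F)$ on the irreducible representation $\pi$ is a scalar, and the two isomorphisms~(\ref{Bern_isom}) at levels $J$ and $J'$ both realize this scalar as evaluation at a common point of $(\widehat{T}^{I_F})_\Phi/W(G,A)$, forcing $s_J(\pi) = s_{J'}(\pi)$.
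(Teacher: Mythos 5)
Your proof is correct, and in fact your primary argument is more direct than the paper's. The paper proves the lemma by reducing to the compatibility of the Bernstein isomorphisms~(\ref{Bern_isom}) at levels $J' \subset J$ (via the restriction map $\mathcal Z(G(F),J) \hookrightarrow \mathcal Z(G(F),J')$), citing \cite[11.10.1]{stable} for the latter; this is essentially the ``conceptual double-check'' you present at the end. Your primary route instead unwinds Definition~\ref{qs_key_defn}: $s(\pi)$ is manufactured from the supercuspidal support $\chi$ of $\pi$ (an intrinsic invariant of the irreducible representation, not of the level used to exhibit $\pi$) followed by the bijection~(\ref{3rd_eq}), and neither ingredient sees $J$, so the compatibility is built into the definition. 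The one auxiliary dependence worth flagging explicitly is on the maximal split torus $A$ (since $A$ must be in good position relative to the parahoric); this is handled because $A$ being good for $J'$ implies it is good for $J \supset J'$ (the facet of $J$ lies in the closure of the facet of $J'$, hence in the same apartment), and in any case the independence of the construction from $A$ is already noted in the footnote to Definition~\ref{qs_key_defn}. With that said, both routes are sound; yours buys a shorter argument that does not require unpacking the Bernstein-isomorphism compatibility, while the paper's phrasing makes the parallel with the later, more involved general case (via the characterization in \S\ref{param_const_gen}) more visible.
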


\begin{proof} Clearly $\Pi(G/F, J) \subset \Pi(G/F, J')$, and the compatibility simply reduces to the compatibility between Bernstein isomorphisms when $J' \subset J$. The latter follows from the construction of \cite[11.10.1]{stable}.
\end{proof}

\section{Second parametrization of $K$-spherical representations: quasi-split case} \label{2nd_param_sec}

Continue to assume $G$ is quasi-split over $F$, but take $J = K$ to be a maximal special parahoric subgroup.  In this case the Satake parameter can be described in another way.

\begin{theorem} \label{2nd_param_thm} Assume $G, K$ as above. We have the following parametrization of $\Pi(G/F,K)$
$$
\xymatrixcolsep{5pc}\xymatrix{
\Pi(G/F, K) \ar@{->}[r]^{(\ref{1st_param})}_{\sim} & (\widehat{T}^{I_F})_\Phi/W(G,A) \ar[r]^{(\ref{3rd_eq})}_{\sim} & [\widehat{G}^{I_F} \rtimes \Phi]_{\rm ss}/ \widehat{G}^{I_F}.
}
$$
\end{theorem}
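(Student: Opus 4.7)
The plan is to observe that Theorem~\ref{2nd_param_thm} is essentially an assembly of bijections already established in the preceding sections, and that the only nontrivial point is to identify the composition with the Satake parameter map $\pi \mapsto s(\pi)$. Concretely, the first arrow (\ref{1st_param}) is the bijection proved in the Proposition of $\S$\ref{1st_param_sec} via the spherical-function construction $\chi \mapsto \pi_\chi$, and the second arrow (\ref{3rd_eq}) is the bijection of Proposition \ref{3rd_eq_prop}, which in turn was deduced from Proposition~\ref{2nd_eq_bijective} together with the $\Gamma$-equivariant isomorphism $W(G,T) \cong \widehat{W}$ (available because $G$ is quasi-split). So bijectivity of the composition is automatic.

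The key remaining step is to check that this composition literally equals $s$. For this I would argue as follows. By the Proposition of $\S$\ref{1st_param_sec}, $\pi_\chi$ is the unique irreducible $K$-spherical subquotient of $i^G_B(\chi)$, so in particular the supercuspidal support of $\pi_\chi$ is $(T,\chi)_G$ with $\chi$ well-defined up to $W(G,A)$. By Definition~\ref{qs_key_defn}, $s(\pi_\chi)$ is precisely the image of this $\chi$ under (\ref{3rd_eq}). This is exactly the composition in the theorem, evaluated at $\pi_\chi$.

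As a cross-check, one can compare actions of the spherical Hecke algebra directly: any $z \in \mathcal Z(G(F),K) = \mathcal H(G(F),K)$ acts on $\pi_\chi^K$ by the scalar $S(z)(\chi)$ by the Satake isomorphism (\ref{Sat_isom}), while by the discussion following (\ref{Bern_isom}) it also acts by $S(z)(s(\pi_\chi))$; since $S$ is an isomorphism of algebras of regular functions on $(\widehat{T}^{I_F})_\Phi/W(G,A)$, the two parameters must coincide.

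The main obstacle, minor but worth flagging, is verifying the compatibility between the classical Satake isomorphism (\ref{Sat_isom}) of \cite[Thm.~1.0.1]{HRo} and the Bernstein isomorphism (\ref{Bern_isom}) of \cite[11.10.1]{stable} at level $J = K$. Because $K$ is special maximal parahoric, $\mathcal H(G(F),K)$ is commutative and equals its own center, so the two isomorphisms have the same source and target; one must check that the constructions agree (this follows from the construction in \cite[11.10.1]{stable}, which in the spherical case specializes to the classical Satake construction, see also Lemma~\ref{comp_lem} for the underlying change-of-level compatibility). Once this identification is in hand, the theorem follows immediately.
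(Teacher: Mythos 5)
Your proposal is correct and matches the paper's (very brief) treatment: the theorem is just the composition of the two previously established bijections (\ref{1st_param}) and (\ref{3rd_eq}), and the identification with $s(\pi)$ is exactly as you describe, with the Satake/Bernstein compatibility handled by the same observation (that the Satake isomorphism is a special case of the Bernstein isomorphism, together with Lemma~\ref{comp_lem}) that the paper invokes in its parenthetical remark following the theorem.
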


We can also realize the Satake parameter $s(\pi)$ for $\pi \in \Pi(G/F,K)$ to be the  image of $\pi$ under this map. This image $s(\pi)$ may be characterized as follows: it is the unique element of the affine variety $[\widehat{G}^{I_F} \rtimes \Phi]_{\rm ss}/\widehat{G}^{I_F}$ such that
$$
{\rm tr}(f | \pi) = S(f)(s(\pi))
$$
where $S(f)$ is the Satake transform for any $f \in \mathcal H(G(F),K)$. (The Satake isomorphism (\ref{Sat_isom}) is just a specific instance of a Bernstein isomorphism and Lemma \ref{comp_lem} shows the two ways of constructing $s(\pi)$ coincide.)

%\section{Dual groups}

%Concerning dual groups and $L$-groups, we will use the conventions of Kottwitz 
%\ref[$\S1$]{cusptemp}: 

\section{Review of transfer homomorphisms} \label{trans_sec}

In order to define Satake parameters for general groups, we need to recall the normalized transfer homomorphisms introduced in \cite[$\S11$]{stable}. Let $G^*$ be a quasi-split group over $F$. Let $F^s$ denote a separable closure of $F$, and set $\Gamma = {\rm Gal}(F^s/F)$. Recall that an inner form of $G^*$ is a pair $(G,\Psi)$ consisting of a connected reductive $F$-group $G$ and a $\Gamma$-stable $G^*_{\rm ad}(F^s)$-orbit $\Psi$ of $F^s$-isomorphisms $\psi: G \rightarrow G^*$.  The set of isomorphism classes of pairs $(G,\Psi)$ corresponds bijectively to $H^1(F, G^*_{\rm ad})$.  As before, we will write $\Gamma^*$, $I^*_F$, etc., to indicate Galois actions on $G^*$.

In the construction of transfer homomorphisms, we start with the choice of some primary data: $A$, $A^*$, and $\widehat{B^*} \supset \widehat{T^*}$. Here, $A$ (resp.~$A^*$) is a maximal $F$-split torus in $G$ (resp.~$G^*$). We will set $M = {\rm Cent}_G(A)$ and $T^* = {\rm Cent}_{G^*}(A^*)$, a maximal torus in $G^*$.  The Borel/torus pair $\widehat{B^*} \supset \widehat{T^*}$ in $\widehat{G^*}$ is specified as follows: we require $\widehat{T^*}, \widehat{B^*}$ to be part of some $\Gamma^*$-fixed splitting $(\widehat{T^*}, \widehat{B^*}, \widehat{X^*})$ (see Remark \ref{dual_innertwist_rem}).  Let $\iota$ be as in Remark \ref{dual_innertwist_rem}. Since $G^*$ is quasi-split, $\iota$ induces a $\Gamma^*$-isomorphism $X_*(T^*) ~ \widetilde{\rightarrow} ~ X^*(\widehat{T^*})$.

Now we make some secondary choices: choose an $F$-parabolic subgroup $P \subset G$ having $M$ as Levi factor, and an $F$-rational Borel subgroup $B^* \subset G^*$ having $T^*$ as Levi factor. Then there exists a unique parabolic subgroup $P^* \subset G^*$ such that $P^* \supseteq B^*$ and $P^*$ is $G^*(F^s)$-conjugate to $\psi(P)$ for every $\psi \in \Psi$.  Let $M^*$ be the unique Levi factor of $P^*$ containing $T^*$. Then define
$$
\Psi_M = \{ \psi \in \Psi ~ | ~ \psi(P) = P^*, \,\, \psi(M) = M^* \}.
$$
(Note we suppress the dependence of $\Psi_M$ on $P, B^*$.) The set $\Psi_M$ is a nonempty $\Gamma$-stable $M^*_{\rm ad}(F^s)$-orbit of $F^s$-isomorphisms $M \rightarrow M^*$, and so $(M,\Psi_M)$ is an inner form of $M^*$. Choose any $\psi_0 \in \Psi_M$.  Then since $\psi_0|A$ is $F$-rational, $\psi_0(A)$ is an $F$-split torus in $Z(M^*)$ and hence $\psi_0(A) \subseteq A^*$. 

The $F$-Levi subgroup $M^*$ corresponds to a $\Gamma^*$-invariant subset $\Delta_{M^*}$ of the $B^*$-positive simple roots $\widehat{\Delta} \subset X^*(T^*)$. It follows that $\iota(\Delta_{M^*}^\vee)$ is a set $\Delta_{\widehat{M^*}}$ of $\widehat{B^*}$-positive simple roots in $X^*(\widehat{T^*})$ for some uniquely determined $\Gamma^*$-stable Levi subgroup $\widehat{M^*} \supset \widehat{T^*}$. Note that $\iota$ defines a $\Gamma^*$-isomorphism $\iota: \Psi_0(M^*)^\vee ~ \widetilde{\rightarrow} ~  \Psi_0(\widehat{M^*})$. Writing $\widehat{X^*} = \{ \widehat{X^*}_\alpha \}_{\alpha \in \widehat{\Delta}}$, we see that $\widehat{M^*}$ has a $\Gamma^*$-fixed splitting $(\widehat{T^*}, \widehat{B^*}_{\widehat{M^*}}, \{ \widehat{X^*}_\alpha \}_{\alpha \in \Delta_{\widehat{M^*}}})$, where $\widehat{B^*}_{\widehat{M^*}} := \widehat{B^*} \cap \widehat{M^*}$.  Hence $(\widehat{M^*}, \iota)$ is a dual group for $M^*$. 

Thus, for every $\psi_0 \in \Psi_M$, we have a $\Gamma$-equivariant homomorphism $$\hat{\psi}_0: Z(\widehat{M}) ~ \widetilde{\rightarrow} ~ Z(\widehat{M^*}) \hookrightarrow \widehat{T^*}.$$ (See Remark \ref{dual_innertwist_rem}.)

We obtain a morphism of affine algebraic varieties 
\begin{align*}
t^{\widehat{T^*}, \widehat{B^*}}_{A^*,A}: (Z(\widehat{M})^{I_F})_{\Phi_F}/W(G,A) &\longrightarrow (\widehat{T^*}^{I^*_F})_{\Phi^*_F} /W(G^*, A^*) \\
\hat{m} &\longmapsto \hat{\psi}_0(\hat{m}).
\end{align*}
The morphism $t^{\widehat{T^*}, \widehat{B^*}}_{A^*, A}$ is independent of the choices of $P$ and $B^*$. Henceforth we will follow the notation of \cite[$\S12.2$]{HRo} and \cite[$\S11$]{stable}, by writing $t_{A^*, A}$ instead of $t^{\widehat{T^*}, \widehat{B^*}}_{A^*,A}$. 

We now recall the definition of a normalized version of $t_{A^*,A}$, for which we need to refine the choice of $\psi_0 \in \Psi_M$ somewhat. Following \cite[Lemma 11.12.4]{stable}, given the choice of $P \supset M$ and $B^* \supset T^*$ used to define $\Psi_M$, choose any {\em $F^{\rm un}$-rational} $\psi_0 \in \Psi_M$ and define a morphism of affine algebraic varieties
\begin{align} \label{t-tilde}
\tilde{t}_{A^*, A} : (Z(\widehat{M})^{I_F})_{\Phi_F}/W(G,A) &\longrightarrow (\widehat{T^*}^{I^*_F})_{\Phi^*_F} /W(G^*, A^*) \\
\hat{m} &\longmapsto \delta_{B^*}^{-1/2} \cdot \hat{\psi}_0(\delta_P^{1/2} \hat{m}). \notag
\end{align}
This makes sense as $\delta_P$ (resp.~$\delta_{B^*}$) is a weakly unramified character of $M(F)$ (resp.~$T^*(F)$), and so can be regarded as an element of $(Z(\widehat{M})^{I_F})_{\Phi_F}$ (resp.~$(\widehat{T^*}^{I^*_F})_{\Phi^*_F}$), by \cite[(3.3.2)]{stable}. 

\begin{lemma} \label{norm_trans_lemma}
The morphism $\tilde{t}_{A^*,A}$ is well-defined and independent of the choice of $P$, $B^*$, and $F^{\rm un}$-rational $\psi_0 \in \Psi_M$ used in its construction. 
\end{lemma}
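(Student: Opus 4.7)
The plan is to reduce this to the already-established independence of the unnormalized transfer $t_{A^*, A}$ from \cite[\S12.2]{HRo} and \cite[\S11]{stable}, by isolating and analyzing the extra normalization factor. Because $\delta_P$ is a weakly unramified character of $M(F)$ taking positive real values (it factors through $M(F)/M(F)_1$ via the normalized absolute value), it corresponds under \cite[(3.3.2)]{stable} to an element of $(Z(\widehat{M})^{I_F})_{\Phi_F}$ that admits a canonical square root $\delta_P^{1/2}$; the analogous statement holds for $\delta_{B^*}^{1/2} \in (\widehat{T^*}^{I^*_F})_{\Phi^*_F}$. Since $\delta_P^{1/2}$ is central in $\widehat{M}$ and $\hat\psi_0$ is a group homomorphism, the defining formula factors as
\[
\tilde t_{A^*, A}(\hat m) \;=\; \bigl(\delta_{B^*}^{-1/2}\,\hat\psi_0(\delta_P^{1/2})\bigr)\cdot t_{A^*, A}(\hat m).
\]

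The central computation is to identify $\hat\psi_0(\delta_P) = \delta_{P^*}$ as elements of $(Z(\widehat{M^*})^{I^*_F})_{\Phi^*_F}$. This is a statement at the level of root data: under the Kottwitz isomorphism, $\delta_P$ corresponds to the element dual to the modular cocharacter $2\rho_N$ (the sum of roots of $T$ in $\mathrm{Lie}\,N$, with multiplicities), restricted to $Z(M)$; and $\hat\psi_0$ is induced by the based-root-datum isomorphism $\iota\circ\check\psi^{-1}$ of Remark \ref{dual_innertwist_rem}, which by construction carries the roots in $\mathrm{Lie}\,N$ to those in the unipotent radical of $P^*$. Combined with the decomposition $\delta_{B^*} = \delta_{P^*}|_{T^*}\cdot\delta_{B^*\cap M^*}$ (valid because $B^* \subset P^*$), this yields the clean rewriting
\[
\tilde t_{A^*, A}(\hat m) \;=\; \delta_{B^*\cap M^*}^{-1/2}\cdot t_{A^*, A}(\hat m),
\]
where $\delta_{B^*\cap M^*}$ is regarded as an element of $(\widehat{T^*}^{I^*_F})_{\Phi^*_F}$ via Kottwitz for $T^*$.

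With the rewriting in hand, the independence claims can be verified factor by factor. The factor $t_{A^*, A}(\hat m)$ is already independent of $P$, $B^*$, and $\psi_0$. For the scalar $\delta_{B^*\cap M^*}^{-1/2}$: replacing $B^*$ (with $T^*$ fixed) conjugates $B^*\cap M^*$ by an element of $N_{G^*}(T^*)(F)$, whose class in $W(G^*,T^*)^{\Gamma^*} = W(G^*,A^*)$ is absorbed into the target Weyl quotient; replacing an $F^{\rm un}$-rational $\psi_0 \in \Psi_M$ by another amounts to inner conjugation by $M^*_{\mathrm{ad}}(F^{\rm un})$, which fixes the central element $\delta_{P^*}^{1/2}$ and modifies $B^*\cap M^*$ only up to a $W(M^*,T^*)^{\Gamma^*}$-element; finally, replacing $P$ (with $M$ fixed) changes $P^*$ but leaves the conjugacy class of $M^*$ (as a Levi containing $T^*$) unchanged up to $W(G^*,A^*)$.

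The main obstacle is the root-theoretic identification $\hat\psi_0(\delta_P) = \delta_{P^*}$, which requires carefully coordinating the Kottwitz correspondence (identifying weakly unramified characters with elements of $(Z(\widehat M)^{I_F})_{\Phi_F}$) with the concrete description of $\hat\psi_0$ coming from $\iota\circ\check\psi^{-1}$. Once this identity, and the factorization $\delta_{B^*} = \delta_{P^*}|_{T^*}\cdot\delta_{B^*\cap M^*}$, are in place, the remaining checks amount to bookkeeping of how the various choices interact with the Weyl quotient, parallel in spirit to the corresponding independence analysis for $t_{A^*, A}$.
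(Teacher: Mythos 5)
The paper does not give a self-contained proof of this lemma; it simply cites \cite[Lemma~11.12.4]{stable}, so a line-by-line comparison with the paper's own argument is not possible. What can be assessed is whether your proposal is a sound reconstruction, and here the answer is: the strategy is the right one, but the independence bookkeeping is looser than it should be.

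Your central move — extracting the correction factor $\delta_{B^*}^{-1/2}\hat\psi_0(\delta_P^{1/2})$ from $t_{A^*,A}$ and identifying it with $\delta_{B^*_{M^*}}^{-1/2}$ via $\hat\psi_0(\delta_P)=\delta_{P^*}$ and $\delta_{B^*}=\delta_{P^*}|_{T^*}\cdot\delta_{B^*_{M^*}}$ — is exactly what is needed; in fact the resulting identity $\delta_{B^*_{M^*}}^{-1/2}=\delta_{B^*}^{-1/2}\hat\psi_0(\delta_P^{1/2})$ is precisely the paper's equation (\ref{delta_identity}) in $\S$\ref{pf_exp_S(G)_sec}, where it is asserted without proof. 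So you have independently found the structural heart of the argument, and you are honest that verifying $\hat\psi_0(\delta_P)=\delta_{P^*}$ (compatibility of the Kottwitz correspondence with $\iota\circ\check\psi^{-1}$) is the main thing still owed. That part is fine as an outline.

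The genuine gap is in the "independence factor by factor" paragraph. The unnormalized $t_{A^*,A}$ is known to be independent of $(P,B^*,\psi_0)$ only as a map to the Weyl \emph{quotient} $(\widehat{T^*}^{I^*_F})_{\Phi^*_F}/W(G^*,A^*)$; before quotienting, the actual element $\hat\psi_0(\hat m)$ does change with the choices. Since the normalized transfer is computed \emph{before} quotienting as $\delta_{B^*_{M^*}}^{-1/2}\cdot\hat\psi_0(\hat m)$ and only then projected, you cannot conclude by showing separately that each factor is independent "up to Weyl conjugation": you must show that when the choices change, both factors move by the \emph{same} element of $W(G^*,A^*)$ so that the product stays in one orbit. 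This coherence is plausible — changing $P$, $B^*$, or the $F^{\rm un}$-rational $\psi_0$ moves $(M^*,B^*_{M^*},\hat\psi_0)$ by a single element of $N_{G^*}(T^*)(F)$ or of $M^*_{\rm ad}(F^{\rm un})$, acting simultaneously on the scalar and on the formula defining $\hat\psi_0$ — but that simultaneity is the crux and is not argued. Also be careful in the "replace $B^*$" and "replace $P$" items: changing either one changes $P^*$, hence $M^*$, hence $\Psi_M$ and $\hat\psi_0$ all at once, so they are not as cleanly separable as your phrasing suggests. Tightening the argument to track a single $w\in W(G^*,A^*)$ through all of $\delta_{B^*_{M^*}}^{-1/2}$, $\hat\psi_0$, and the Weyl-equivariance in (\ref{bij_map}) is what would make this a complete proof.
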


\begin{proof}
The independence statement and the compatibility with the Weyl group actions are proved in \cite[11.12.4]{stable}.
\end{proof}

\begin{lemma} \label{immersion_lem} The morphism \textup{(}\ref{t-tilde}\textup{)} is a closed immersion.
\end{lemma}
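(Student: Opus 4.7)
The plan is to analyze $\tilde{t}_{A^*, A}$ as a composite: first multiplication by $\delta_P^{1/2}$ on the source, then the homomorphism induced by $\hat{\psi}_0$, then multiplication by $\delta_{B^*}^{-1/2}$ on the target. The first and last maps are automorphisms of the respective affine varieties, and the normalized transfer map is already known to descend to the Weyl quotients by Lemma \ref{norm_trans_lemma}. Thus the question reduces to showing that the homomorphism induced by $\hat{\psi}_0$ gives a closed immersion on the Weyl-quotient varieties.

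Before passing to Weyl quotients, I would establish that $\hat{\psi}_0$ induces a closed immersion $(Z(\widehat{M})^{I_F})_{\Phi_F} \hookrightarrow (\widehat{T^*}^{I^*_F})_{\Phi^*_F}$ in three steps. First, the composition $Z(\widehat{M}) ~ \widetilde{\rightarrow} ~ Z(\widehat{M^*}) \hookrightarrow \widehat{T^*}$ is a $\Gamma$-equivariant closed immersion of diagonalizable complex groups (a $\Gamma$-equivariant isomorphism onto the center, composed with the obvious closed inclusion). Second, taking $I_F$-fixed points (identified with $I^*_F$-fixed points under $\Gamma = \Gamma^*$) preserves closed immersions of diagonalizable groups, yielding a $\Phi$-equivariant closed immersion $Z(\widehat{M})^{I_F} \hookrightarrow \widehat{T^*}^{I^*_F}$. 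Third, descending to $\Phi$-coinvariants, the induced map is still a closed immersion: dualizing via the character-group functor, this amounts to the surjection $X^*(\widehat{T^*}^{I^*_F})^{\Phi^*} \twoheadrightarrow X^*(Z(\widehat{M})^{I_F})^\Phi$. Under the Kottwitz isomorphism this corresponds to the natural surjection of weakly unramified character groups $X^{\rm w}(T^*) \twoheadrightarrow X^{\rm w}(M)$ induced by $\hat{\psi}_0$; surjectivity holds because $\psi_0$ may be chosen $F^{\rm un}$-rational, so the $\Phi$-actions on the two diagonalizable groups are aligned and every weakly unramified character of $M(F)$ lifts through the quotient.

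To descend to Weyl quotients, I would use the canonical embedding $W(G, A) \hookrightarrow W(G^*, A^*)$ (induced by $\psi_0$ together with $\psi_0(A) \subseteq A^*$, and well-defined since a different choice of $\psi_0$ differs by an inner automorphism of $M^*$). The normalized transfer map is equivariant for this embedding, as recorded in \cite[$\S 11$]{stable}. A closed immersion of affine varieties equivariant for an embedding of finite groups descends to a closed immersion on quotients as soon as distinct $W(G, A)$-orbits in the source land in distinct $W(G^*, A^*)$-orbits in the target (then the image is automatically closed since the quotient map is finite). I would verify this by a stabilizer argument: any $w^* \in W(G^*, A^*)$ that preserves the image of $(Z(\widehat{M})^{I_F})_{\Phi_F}$ setwise and acts on weakly unramified characters of $M(F)$ must in fact come from $W(G, A)$, a consequence of the minimality of $M = {\rm Cent}_G(A)$ as an $F$-Levi and the fact that $Z(\widehat{M^*})^{I^*_F}$ is precisely the intersection of $\widehat{T^*}^{I^*_F}$ with the Levi subgroup $\widehat{M^*} \subset \widehat{G^*}$.

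The main technical hurdle will be the $\Phi$-coinvariants step, since surjectivity of $\Phi$-invariants after a surjection of character lattices is not automatic and requires control of a Tate cohomology group in $\langle \Phi \rangle$. I expect the $F^{\rm un}$-rationality of $\psi_0$ to be exactly what rules out the relevant obstruction: it ensures that the kernel of $\widehat{T^*}^{I^*_F} \twoheadrightarrow Z(\widehat{M^*})^{I^*_F}$ carries a $\Phi$-action compatible with a splitting, so no Galois-cohomological obstruction arises.
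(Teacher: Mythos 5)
Your proposal diverges from the paper's argument at two places, and both contain genuine gaps.

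First, a notational confusion that matters. You write that the desired surjection $X^*(\widehat{T^*}^{I^*_F})^{\Phi^*} \twoheadrightarrow X^*(Z(\widehat{M})^{I_F})^{\Phi}$ ``corresponds to the natural surjection of weakly unramified character groups $X^{\rm w}(T^*) \twoheadrightarrow X^{\rm w}(M)$.'' This is backwards. The Kottwitz isomorphism identifies $X^*(\widehat{T^*}^{I^*_F})^{\Phi^*}$ with $T^*(F)/T^*(F)_1$, which is \emph{dual} to $X^{\rm w}(T^*) = {\rm Hom}(T^*(F)/T^*(F)_1, \mathbb C^\times)$. What is needed (and what the paper cites \cite[Rem.~11.12.2]{stable} for) is surjectivity of the map of Kottwitz groups $T^*(F)/T^*(F)_1 \twoheadrightarrow M^*(F)/M^*(F)_1 \cong M(F)/M(F)_1$, a nontrivial statement about the functoriality of the Kottwitz homomorphism for the inclusion of a maximal torus into a Levi. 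The $F^{\rm un}$-rationality of $\psi_0$ is irrelevant to this step: the first arrow in (\ref{Kottwitz_reinterpret}) does not involve $\psi_0$ at all, and $\psi_0$ only enters the second arrow (which is an isomorphism automatically, since $\psi_0$ is an inner twist). Your appeal to ``aligned $\Phi$-actions'' and ``control of a Tate cohomology group'' does not touch the real content.

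Second, and more seriously, the Weyl-group descent step. You assume a canonical embedding $W(G, A) \hookrightarrow W(G^*, A^*)$; no such embedding exists in general. What exists canonically is the bijection $\psi_0^\natural: W(G,A) \widetilde{\rightarrow} W(G^*,A^*)/W(M^*,A^*)$ onto a \emph{coset space}, not a subgroup. (The relevant group $W(G,A)$ is a subquotient, not a subgroup, of $W(G^*,A^*)$; a splitting of the relevant extension can be chosen but is not canonical.) Moreover, even granting an embedding, your proposed general lemma --- that a closed immersion equivariant for an inclusion of finite groups descends to a closed immersion of quotients whenever distinct orbits go to distinct orbits --- is false: an injective finite morphism of affine varieties with closed image need not be a closed immersion (the normalization of a cuspidal cubic is a standard counterexample), so injectivity on orbits is not enough; one must actually establish surjectivity of the induced ring map $\mathbb C[Y]^{W_2} \to \mathbb C[X]^{W_1}$.

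The paper sidesteps both issues with a decisive extra ingredient you are missing: \emph{the subgroup $W(M^*,A^*)$ acts trivially on $M^*(F)/M^*(F)_1$} (because inner automorphisms of $M^*$ act trivially on its Kottwitz group; equivalently, $M^*_{\rm ad}$ is anisotropic in the sense used there). This allows the $W(G^*,A^*)$-action on the image of $T^*(F)/T^*(F)_1$ to factor precisely through the coset space $W(G^*,A^*)/W(M^*,A^*) \cong W(G,A)$, and the surjectivity of (\ref{want_surj}) is then established by a direct computation with orbit sums: $\Sigma_{t^*} \mapsto |W(M^*,A^*)| \cdot \Sigma_m$. Your plan, as written, neither invokes this triviality nor replaces it by anything else, so the descent step does not go through.
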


\begin{proof}
We first prove that the map $(Z(\widehat{M})^{I_F})_{\Phi_F} \rightarrow (\widehat{T^*}^{I^*_F})_{\Phi^*_F}$ given by $\hat{m} \mapsto \delta_{B^*}^{-1/2} \hat{\psi}_0(\delta_P^{1/2}\hat{m})$ is a closed immersion. For this it is clearly enough to show that the {\em unnormalized} map $\hat{m} \mapsto \hat{\psi}_0(\hat{m})$ is a closed immersion. But this follows from the surjectivity of the corresponding map
\begin{equation} \label{corr_map}
t_{A^*,A}: X^*(\widehat{T^*})^{\Phi^*_F}_{I^*_F} \rightarrow X^*(Z(\widehat{M}))^{\Phi_F}_{I_F},
\end{equation}
which was proved in \cite[Remark 11.12.2]{stable}. In fact this is done by interpreting (\ref{corr_map}), via the Kottwitz isomorphism, as the natural map
\begin{equation} \label{Kottwitz_reinterpret}
\xymatrixcolsep{2pc}\xymatrix{T^*(F)/T^*(F)_1 \ar[r] & M^*(F)/M^*(F)_1 \ar[r]^{\psi_0^{-1}}_{\sim} & M(F)/M(F)_1.}
\end{equation}
We therefore have a surjective normalized variant
\begin{equation} \label{norm_Kottwitz_reinterpret}
\xymatrixcolsep{2pc}\xymatrix{\tilde{t}_{A^*, A}: T^*(F)/T^*(F)_1 \ar[r] & M^*(F)/M^*(F)_1 \ar[r]^{\psi_0^{-1}}_{\sim} & M(F)/M(F)_1.}
\end{equation}

Now recall that \cite[11.12.3]{stable} constructs a bijective map
\begin{equation} \label{bij_map}
\xymatrixcolsep{4pc}\xymatrix{
W(G,A) \ar[r]^{\psi_0^\natural \,\,\,\,\,\,\,\,\,\,\,\,\,\,\,\,}_{\sim \,\,\,\,\,\,\,\,\,\,\,\,\,\,\,\,\,\,} & W(G^*,A^*)/W(M^*, A^*)}
\end{equation}
defined as follows. Let $F^{\rm un}$ be the maximal unramified extension of $F$ in $F^s$, and let $L$ denote the completion of $F^{\rm un}$. Let $S^*$ be the $F^{\rm un}$-split component of $T^*$. Choose a maximal $F^{\rm un}$-split torus $S \subset G$ which is defined over $F$ and which contains $A$, and set $T = {\rm Cent}_G(S)$. Choose $\psi_0 \in \Psi_M$ such that $\psi_0$ is defined over $F^{\rm un}$ and has $\psi_0(S) = S^*$, hence also $\psi_0(T) = T^*$. Now suppose $w \in W(G,A)$. We may choose a representative $n \in N_G(S)(L)^{\Phi_F}$ (cf.~\cite{HRo}). There exists $m^*_n \in N_{M^*}(S^*)(L)$ such that $\psi_0(n)m^*_n \in N_{G^*}(A^*)(F)$. Then define $\psi_0^\natural(w)$ to be the image of $\psi_0(n)m^*_n$ in $W(G^*,A^*)/W(M^*,A^*)$.

Now the desired surjectivity of 
\begin{equation} \label{want_surj}
\xymatrixcolsep{5pc}\xymatrix{
\mathbb C[T^*(F)/T^*(F)_1]^{W(G^*,A^*)} \ar[r]^{\tilde{t}_{A^*A}} &  \mathbb C[M(F)/M(F)_1]^{W(G,A)}}
\end{equation}
follows without difficulty using the surjectivity of (\ref{norm_Kottwitz_reinterpret}) and the isomorphism (\ref{bij_map}), because $W(M^*,A^*)$ and $N_{M^*}(S^*)(L)$ act trivially on $M^*(F)/M^*(F)_1$. Indeed, for $m \in M(F)/M(F)_1$, define $\Sigma_m \in \mathbb C[M(F)/M(F)_1]^{W(G,A)}$ by $\Sigma_m := \sum_{w \in W(G,A)} w \cdot m$.  Suppose $t^* \in T^*(F)/T^*(F)_1$ maps to $m$ under (\ref{norm_Kottwitz_reinterpret}), and define $\Sigma_{t^*} \in \mathbb C[T^*(F)/T^*(F)_1]^{W(G^*, A^*)}$ by $\Sigma_{t^*} := \sum_{w^* \in W(G^*, A^*)} w^* \cdot t^*$. Then (\ref{want_surj}) sends $\Sigma_{t^*}$ to $|W(M^*, A^*)| \cdot \Sigma_m$.
\end{proof}

%Now we have a commutative diagram
%$$
%\xymatrixcolsep{5pc}\xymatrix{
%(Z(\widehat{M})^{I_F})_{\Phi_F}  \ar@{->>}[d]_{\tiny {\rm finite}} \ar@{^{(}->}^{i}[r] &  %(\widehat{T^*}^{I^*_F})_{\Phi^*_F}  \ar@{->>}[d]_{\tiny {\rm finite}} \\
%(Z(\widehat{M})^{I_F})_{\Phi_F}/W(G,A) \ar[r]^{\bar{i}} &  (\widehat{T^*}^{I^*_F})_{\Phi^*_F}/W(G^*,A^*).}
%$$ 
%Since $i$ is a closed immersion, to show $\bar{i}$ is a closed immersion it suffices to show it is injective.  

Recall the definition of the normalized transfer homomorphism on the level of Bernstein centers.
\begin{defn} \label{tilde-t_defn} (\cite[11.12.1]{stable})
Let $J \subset G(F)$ and $J^* \subset G^*(F)$ be any parahoric subgroups and choose maximal $F$-split tori $A$ resp.~$A^*$ to be in good position\footnote{This means that in the Bruhat-Tits building $\mathcal B(G_{\rm ad}, F)$, the facet corresponding to $J$ is contained in the apartment corresponding to $A$.} relative to $J$ resp.~$J^*$. Then we define the {\em normalized transfer homomorphism} $\tilde{t}: \mathcal Z(G^*(F), J^*) \rightarrow \mathcal Z(G(F), J)$ to be the unique homomorphism making the following diagram commute
$$
\xymatrix{
\mathcal Z(G^*(F), J^*) \ar[r]^{\tilde{t}} \ar[d]_{\wr}^{S} & \mathcal Z(G(F), J) \ar[d]_{\wr}^{S}  \\
 \mathbb C[X^*(\widehat{T^*})^{\Phi_F^*}_{I^*_F}]^{W(G^*,A^*)} \ar[r]^{\tilde{t}_{A^*,A}} & \mathbb C[X^*(Z(\widehat{M}))^{\Phi_F}_{I_F}]^{W(G,A)}.
}$$
\end{defn}
We use $S$ to denote the Bernstein isomorphisms described in \cite[11.10.1]{stable}. As explained in \cite[Def.~11.12.5]{stable}, $\tilde{t}$ is independent of the choices for $A, A^*$, and $\widehat{B^*} \supset \widehat{T^*}$, and is a completely canonical homomorphism.

\begin{cor} \textup{(}of Lemma \ref{immersion_lem} \textup{)} \label{tilde-t_surj}
The normalized transfer homomorphism $\tilde{t}: \mathcal Z(G^*(F), J^*) \rightarrow \mathcal Z(G(F), J)$ is {\em surjective}.
\end{cor}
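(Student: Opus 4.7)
The plan is to observe that this corollary is essentially a direct translation of Lemma \ref{immersion_lem} via the commutative diagram of Definition \ref{tilde-t_defn}. The whole point of Lemma \ref{immersion_lem} was to establish that $\tilde{t}_{A^*,A}$ is a closed immersion of affine varieties, and dualizing to coordinate rings gives surjectivity. Since the vertical Bernstein isomorphisms $S$ in Definition \ref{tilde-t_defn} are genuine $\mathbb{C}$-algebra isomorphisms by \cite[11.10.1]{stable}, surjectivity of $\tilde{t}$ on Bernstein centers is equivalent to surjectivity of the bottom horizontal map $\tilde{t}_{A^*,A}$ on $W$-invariants.

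More concretely, I would first invoke Lemma \ref{immersion_lem}, which guarantees that $\tilde{t}_{A^*,A}$ is a closed immersion of affine algebraic varieties $(Z(\widehat{M})^{I_F})_{\Phi_F}/W(G,A) \hookrightarrow (\widehat{T^*}^{I^*_F})_{\Phi^*_F}/W(G^*,A^*)$. Closed immersions of affine varieties correspond to surjective maps of coordinate rings, which in this case is exactly the bottom arrow of the square in Definition \ref{tilde-t_defn} (after identifying character groups with the Kottwitz quotients as in (\ref{Kottwitz_isom})). In fact the proof of Lemma \ref{immersion_lem} gives this surjectivity explicitly as (\ref{want_surj}): one writes an arbitrary $W(G,A)$-invariant $\Sigma_m$ as a scalar multiple of the image of a $W(G^*,A^*)$-invariant symmetrization $\Sigma_{t^*}$ for any lift $t^*$ of $m$.

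Finally, chasing the commutative square backwards through the Bernstein isomorphisms $S$ gives surjectivity of $\tilde{t} : \mathcal Z(G^*(F), J^*) \to \mathcal Z(G(F), J)$. Since $\tilde{t}$ is canonical (independent of auxiliary choices, cf.\ \cite[Def.~11.12.5]{stable}), the conclusion holds in the stated generality.

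There is no real obstacle here: all the work is already contained in Lemma \ref{immersion_lem} and the fact that $S$ is an isomorphism. The only thing to verify is the routine dictionary between closed immersions of affine varieties and surjections of coordinate rings, which is unconditional.
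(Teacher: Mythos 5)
Your proposal is correct and matches the paper's intended argument exactly: the paper labels this a corollary of Lemma \ref{immersion_lem} precisely because the closed immersion $\tilde{t}_{A^*,A}$ dualizes to a surjection of coordinate rings (which the proof of Lemma \ref{immersion_lem} already exhibits explicitly as (\ref{want_surj})), and the commutative square in Definition \ref{tilde-t_defn} with the vertical Bernstein isomorphisms transports this surjectivity to $\tilde{t}$ on Bernstein centers. Nothing is missing.
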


We now present an alternative way to characterize the maps $\tilde{t}$, reformulating slightly \cite[11.12.6]{stable}.

\begin{prop} Choose $A, A^*, \psi_0 \in \Psi_M$ as needed in Definition \ref{tilde-t_defn}. For each subtorus $A_L \subseteq A$, let $L = {\rm Cent}_G(A_L)$ and $L^* = \psi_0(L)$, so that $\psi_0$ restricts to an inner twising $L \rightarrow L^*$ of $F$-Levi subgroups of $G$ resp.~$G^*$. Set $J_L = J \cap L(F)$. Then the family of normalized transfer homomorphisms $\tilde{t} : \mathcal Z(L^*(F), J^*_{L^*}) \rightarrow \mathcal Z(L(F), J_L)$ is the unique family with the following properties:
\begin{enumerate}
\item[(a)] The $\tilde{t}$ are compatible with the constant term homomorphisms $c^G_L$, in the sense that the following diagrams commute for all $L$:
$$
\xymatrix{
\mathcal Z(G^*(F), J^*) \ar@{^{(}->}[d]_{c^{G^*}_{L^*}} \ar[r]^{\tilde t} & \mathcal Z(G(F), J) \ar@{^{(}->}[d]_{c^G_L} \\
\mathcal Z(L^*(F), J^*_{L^*}) \ar[r]^{\tilde{t}} & \mathcal Z(L(F), J_L).}
$$ 
\item[(b)] For $L = M$ and $z \in \mathcal Z(M^*(F), J^*_{M^*})$, the function $\tilde{t}(z)$ is given by integrating $z$ over the fibers of the Kottwitz homomorphism $\kappa_{M^*}(F)$. \textup{(}Note $M_{\rm ad}$ is anisotropic over $F$.\textup{)}
\end{enumerate}
\end{prop}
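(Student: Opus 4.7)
The plan is to treat existence (the family of Definition \ref{tilde-t_defn} satisfies (a) and (b)) and uniqueness separately; throughout I read (a) in its natural functorial sense so that the stated commutative square is available for every pair $L_1 \supseteq L_2 \supseteq M$ with $c^{L_1}_{L_2}$ in place of $c^G_L$ (this is what is needed for the family to deserve its name).

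For the existence of (a), my approach is to translate everything through the Bernstein isomorphism (\ref{Bern_isom}). Since $M$ is a minimal $F$-Levi of both $G$ and $L$, the centers $\mathcal Z(G(F), J)$ and $\mathcal Z(L(F), J_L)$ are realized as the $W(G, A)$- and $W(L, A)$-invariants inside the common polynomial ring $\mathbb C[X^*(Z(\widehat{M}))^{\Phi_F}_{I_F}]$, and $c^G_L$ corresponds to the inclusion of the smaller invariants into the larger; the parallel statement holds on the starred side. The map $\tilde{t}_{A^*, A}$ is given by the polynomial formula $\hat{m} \mapsto \delta_{B^*}^{-1/2}\hat{\psi}_0(\delta_P^{1/2}\hat{m})$, and this is literally the \emph{same} formula whether computed from $(G, P, B^*)$ or from $(L, P \cap L, B^* \cap L^*)$, provided one checks that $\hat{\psi}_0$ carries the ``outside-$L$'' part $\delta_P/\delta_{P \cap L}$ of the modulus character onto $\delta_{B^*}/\delta_{B^* \cap L^*}$. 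This last check is of the same flavor as the $\delta$-bookkeeping that underlies \cite[Lemma 11.12.4]{stable}, and once it is done the square in (a) commutes by restriction to invariants.

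For (b), I would specialize to $L = M$. Since $M_{\rm ad}$ is anisotropic, $M(F)_1$ is compact, and the Bernstein isomorphism identifies $\mathcal Z(M(F), J_M)$ with $\mathcal H(M(F), M(F)_1) \cong \mathbb C[M(F)/M(F)_1]$ via integration of a function against the characteristic function of a coset of $M(F)_1$. On the $M^*$-side each $z \in \mathcal Z(M^*(F), J^*_{M^*})$ has well-defined fiberwise integrals along $\kappa_{M^*}$, producing a function on $M^*(F)/M^*(F)_1$. Reading (\ref{t-tilde}) for $L = M$ through these Kottwitz-side pictures and through (\ref{norm_Kottwitz_reinterpret}) shows that $\tilde{t}(z)$ is exactly the function on $M(F)$ whose value on the coset $m M(F)_1$ equals $\int_{\psi_0(m) M^*(F)_1} z(x)\,dx$ (up to a fixed volume normalization), which is the assertion of (b).

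For uniqueness, let $\tilde{t}'$ be another family satisfying (a) and (b). For each $L$, applying (a) to the pair $(L, M)$ yields $c^L_M \circ \tilde{t}'_L = \tilde{t}'_M \circ c^{L^*}_{M^*}$, and by (b) the right-hand side equals $\tilde{t}_M \circ c^{L^*}_{M^*} = c^L_M \circ \tilde{t}_L$. The injectivity of $c^L_M$ (the hook-arrow in (a)) then forces $\tilde{t}'_L = \tilde{t}_L$ for every $L$. The main obstacle I anticipate is the $\delta$-bookkeeping inside (a): one must verify carefully that $\hat{\psi}_0$ matches $\delta_P/\delta_{P\cap L}$ with $\delta_{B^*}/\delta_{B^* \cap L^*}$, which is where the geometry of the inner twist really enters. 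Once (a) is in place, (b) is essentially a matter of unwinding definitions from $\S\ref{trans_sec}$, and the uniqueness is then formal.
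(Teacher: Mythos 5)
The paper itself offers no proof of this proposition; it is presented as a slight reformulation of \cite[11.12.6]{stable} and the burden is deferred there, so there is no in-paper argument to compare against line by line. That said, your overall strategy---push everything through the Bernstein isomorphisms so that $\mathcal Z(L(F),J_L)\cong\mathbb C[X^*(Z(\widehat M))^{\Phi_F}_{I_F}]^{W(L,A)}$, interpret the constant terms $c^{L_1}_{L_2}$ as inclusions of invariants, and observe that $\tilde t_{A^*,A}$ is the same polynomial map $\hat m\mapsto\delta^{-1/2}_{B^*_{M^*}}\hat\psi_0(\hat m)$ at every Levi level---is exactly the right one, and the $\delta$-bookkeeping you flag is precisely the identity the paper records as (\ref{delta_identity}).

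Two points deserve emphasis. First, you are right that the uniqueness step genuinely requires reading (a) for \emph{all} nested pairs $L_1\supseteq L_2$, not just $(G,L)$: with the literal $(G,L)$ reading only, from $\tilde t'_L\circ c^{G^*}_{L^*}=\tilde t_L\circ c^{G^*}_{L^*}$ and the mere injectivity of $c^{G^*}_{L^*}$ one cannot pin down $\tilde t'_L$ off the image of $c^{G^*}_{L^*}$, and the algebra structure alone does not rescue the argument since $\mathbb C[X]^{W(G^*,A^*)}$ does not generate $\mathbb C[X]^{W(L^*,A^*)}$ as an algebra. So your functorial reading is not merely aesthetic; it is what makes the uniqueness assertion true, and it is clearly what the proposition intends. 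Second, the argument for (b) is sketched more loosely than for (a): to go from the normalized transfer formula at $L=M$ (with its $\delta^{-1/2}_{B^*_{M^*}}$ twist) to ``integration over $\kappa_{M^*}$-fibers'' one has to unwind the Satake transform for $M^*$, observe that the $\delta^{1/2}_{B^*_{M^*}}$ in the Satake integral cancels the $\delta^{-1/2}_{B^*_{M^*}}$ in (\ref{t-tilde}), and then invoke an Iwasawa decomposition of $M^*(F)_1$ to trade the remaining unipotent integral for fiberwise integration along $\kappa_{M^*}$. Your picture is correct, but this cancellation and the Iwasawa step are the content of (b) and should be spelled out. With those two points in place, the existence-plus-uniqueness structure of your proof is sound.
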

The constant term homomorphisms here are defined in \cite[11.11]{stable} as follows: suppose $Q = LR$ is an $F$-rational parabolic subgroup with Levi factor $L$ and unipotent radical $R$. Given $z \in \mathcal Z(G(F),J)$, define $c^G_L(z) \in \mathcal Z(L(F), J_L)$ by
$$
c^G_L(z)(l) = \delta_Q^{1/2}(l) \int_{R(F)} z(lr) \, dr = \delta_Q^{-1/2}(l) \int_{R(F)} z(rl) \, dr,
$$
for $l \in L(F)$, where ${\rm vol}_{dr}(J \cap R(F)) = 1$.  It is proved as in \cite[Lemma 4.7.2]{H09} that $c^G_L(z)$ really does belong to the center of $\mathcal H(L(F), J_L)$ and is independent of the choice of $Q$ having $L$ as a Levi factor.

\section{Construction of parameters: general case} \label{param_const_gen}

Suppose $G$ is any connected reductive group over $F$, and $J \subset G(F)$ is a parahoric subgroup.  Fix our primary data $A, A^*$ and $\widehat{B^*} \supset \widehat{T^*}$ as in the construction of $\tilde{t}_{A^*,A}$ in (\ref{t-tilde}).

Let $\pi \in \Pi(G/F, J)$. By \cite[$\S11.5$]{stable}, there exists a weakly unramified character $\chi \in (Z(\widehat{M})^{I_F})_\Phi/W(G,A)$ such that $\pi$ is an irreducible subquotient of the normalized induction $i^G_P(\chi)$. 

\begin{defn} Define $s(\pi) \in [\widehat{G}^{I_F} \rtimes \Phi]_{\rm ss}/\widehat{G}^{I_F}$ to be the image of $ \chi \in (Z(\widehat{M})^{I_F})_\Phi/W(G,A)$ under the map
\begin{equation} \label{key_defn}
\xymatrixcolsep{1.5pc}\xymatrix{
(Z(\widehat{M})^{I_F})_\Phi/W(G,A) \ar@{^{(}->}[r]^{\,\,\, \tilde{t}_{A^*, A}} & (\widehat{T^*}^{I^*_F})_\Phi/W(G^*, A^*) \ar[r]^{(\ref{3rd_eq})}_{\sim} & [\widehat{G^*}^{I^*_F} \rtimes \Phi^*]_{\rm ss}/\widehat{G^*}^{I^*_F} \ar@{=}[r]^{\,\,\, (\ref{paramspace_twist})} & [\widehat{G}^{I_F} \rtimes \Phi]_{\rm ss}/\widehat{G}^{I_F}.}
\end{equation} 
Define $S(G)$ to be the image of this map, which is a closed subvariety of $[\widehat{G}^{I_F} \rtimes \Phi]_{\rm ss}/\widehat{G}^{I_F}$ by Lemma \ref{immersion_lem}. 
\end{defn}

Let us prove that the set $S(G)$ and the element $s(\pi) \in S(G)$ are independent of the primary choices $A, A^*$, $\widehat{B^*} \supset \widehat{T^*}$ we have made in their construction.  Because $\widehat{G^*}^{\Gamma^*}$ acts transitively on $\Gamma^*$-fixed splittings (\cite[1.7]{cusptemp}), the map (\ref{key_defn}) is already independent of the pair $\widehat{B^*} \supset \widehat{T^*}$.  The independence of the map $\pi \mapsto s(\pi)$ from $A$ 
(resp.~$A^*$) results from the fact that any other choice for $A$ (resp.~$A^*$) would be $G(F)$-(resp.~$G^*(F)$-)conjugate to it.

Now suppose $A$, $J$ are as above. Then we have the Bernstein isomorphism of \cite[11.9.1]{stable}
\begin{equation*} 
S: \mathcal Z(G(F),J) ~ \widetilde{\rightarrow} ~ \mathbb C[(Z(\widehat{M})^{I_F})_\Phi/{W(G,A)}].
\end{equation*}
By \cite[$\S11.8$]{stable}, $z \in \mathcal Z(G(F), J)$ acts on $i^G_P(\chi)^J$ by the scalar $S(z)(\chi)$. 
Therefore we have the following characterization of $s(\pi)$: choose any parahoric 
subgroup $J^* \subset G^*(F)$; then for all $z^* \in \mathcal Z(G^*(F), J^*)$ we have
\begin{equation*}
{\rm tr}(\tilde{t}(z^*) \, | \, \pi) = {\rm dim}(\pi^J) \, S(z^*)(s(\pi)).
\end{equation*}

In particular, when $G$ is quasi-split, the map $\pi \mapsto s(\pi)$ defined here coincides with the map defined in Definition \ref{qs_key_defn}. Further, in the general case $\pi \mapsto s(\pi)$ is compatible with change of level $J' \subset J$ in the same sense as in Lemma \ref{comp_lem}.

\section{Second parametrization of $K$-spherical representations: general case} \label{2nd_param_gen_sec}

Let $K \subset G(F)$ be a special maximal parahoric subgroup. Putting together the isomorphism (\ref{1st_param}) with the map (\ref{key_defn}), we obtain the following.

\begin{theorem} \label{gen_2nd_param_thm} There is a canonical parametrization of $\Pi(G/F,K)$
$$
\xymatrixcolsep{4pc}\xymatrix{
\Pi(G/F, K) \ar@{->}[r]^{\,\,\,\,\, \pi \mapsto s(\pi)}_{\,\,\,\,\, \sim} & S(G) \ar@{^{(}->}[r] & [\widehat{G}^{I_F} \rtimes \Phi]_{\rm ss}/ \widehat{G}^{I_F}.
}
$$
Furthermore, $S(G) = [\widehat{G}^{I_F} \rtimes \Phi]_{\rm ss}/ \widehat{G}^{I_F}$ if and only if $G/F$ is quasi-split. 
\end{theorem}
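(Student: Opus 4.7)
The plan is to extract both assertions with little additional work from the results of $\S\ref{1st_param_sec}$--$\S\ref{param_const_gen}$. For the parametrization $\Pi(G/F, K) \xrightarrow{\sim} S(G)$, the key observation is that $\pi \mapsto s(\pi)$ factors as the inverse of the bijection (\ref{1st_param}) followed by the map (\ref{key_defn}): indeed, for $\pi = \pi_\chi$ the class of $\chi$ is the supercuspidal support of $\pi_\chi$, since $\pi_\chi$ is an irreducible subquotient of $i^G_P(\chi)$. The first factor is a bijection by the proposition of $\S\ref{1st_param_sec}$; the second is a closed immersion with image $S(G)$ by Lemma \ref{immersion_lem} and the definition of $S(G)$. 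Hence the composition is a bijection, and its independence from auxiliary data was already verified in $\S\ref{param_const_gen}$.

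If $G$ is quasi-split, I would take $G^* = G$ and $A^* = A$ with $\psi_0 = \id$. Then $M = T$ is a maximal $F$-torus of $G$, the normalization factors in the definition (\ref{t-tilde}) of $\tilde{t}_{A^*,A}$ cancel, and the map (\ref{key_defn}) reduces to the bijection (\ref{3rd_eq}) of Proposition \ref{3rd_eq_prop}; so $S(G)$ exhausts $[\widehat{G}^{I_F} \rtimes \Phi]_{\rm ss}/\widehat{G}^{I_F}$.

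For the converse I would argue contrapositively by a rank comparison. If $G$ is not quasi-split, then $M$ is not a torus, so $M^* = \psi_0(M)$ properly contains $T^*$. Were $A^*$ contained in $Z(M^*)$, one would have $M^* \subseteq {\rm Cent}_{G^*}(A^*) = T^*$, contradicting $M^* \supsetneq T^*$; hence the maximal $F$-split subtorus $A^*_{M^*} \subseteq Z(M^*)$ is a proper subtorus of $A^*$, giving $\dim A^*_{M^*} < \dim A^*$. Via the Kottwitz reinterpretation (\ref{norm_Kottwitz_reinterpret}) used in the proof of Lemma \ref{immersion_lem}, $\tilde{t}_{A^*, A}$ corresponds to the surjection $T^*(F)/T^*(F)_1 \twoheadrightarrow M(F)/M(F)_1$, whose source and target are finitely generated abelian groups of free ranks $\dim A^*$ and $\dim A^*_{M^*}$ respectively. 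The strict inequality of ranks prevents this surjection from being an isomorphism, so $\tilde{t}_{A^*,A}$ is a closed immersion that fails to be surjective, and consequently $S(G) \subsetneq [\widehat{G}^{I_F} \rtimes \Phi]_{\rm ss}/\widehat{G}^{I_F}$. All substantive input was already established in $\S\ref{trans_sec}$; the only real obstacle in the present proof is recognising that $M^* = T^*$ precisely characterises the quasi-split case, and transporting the rank comparison through the Kottwitz dictionary.
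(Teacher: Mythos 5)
Your argument is essentially the paper's: the parametrization is immediate from (\ref{1st_param}) and Lemma \ref{immersion_lem}, and the ``only if'' is a rank comparison forced by $M$ not being a torus when $G$ is not quasi-split. The one thing to flag is that your assertion that $M(F)/M(F)_1$ has free rank $\dim A^*_{M^*}$ (equivalently $\dim A$) is not elementary for a non-torus $M$ — it is precisely the content of the paper's Lemma \ref{dim_lem}, which computes $\dim(Z(\widehat{M})^{I_F})_\Phi$ via Borovoi's identification $X_*(T)/X_*(T_{\rm sc}) \cong X^*(Z(\widehat{M}))$; you should cite or reprove that lemma rather than treat the rank identity as given, after which the rest is sound.
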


\begin{proof}
The parametrization is immediate, and the ``only if'' results from the strict inequality ${\rm dim}\, (Z(\widehat{M})^{I_F})_\Phi = {\rm dim}\, (Z(\widehat{M^*})^{I^*_F})_{\Phi^*} < {\rm dim} \, (\widehat{T^*}^{I^*_F})_{\Phi^*}$ if $M$ is not a maximal torus in $G$. (The inequality follows from Lemma \ref{dim_lem} below.) This proves items (A) and (B) of Theorem \ref{main_thm}.
\end{proof}

\begin{lemma} \label{dim_lem} For any connected reductive $F$-group $G$, the dimension of the diagonalizable group $(Z(\widehat{G})^{I_F})_\Phi$ is the rank of the maximal $F$-split torus in the center of $G$.
\end{lemma}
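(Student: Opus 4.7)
The plan is to reduce the statement to a calculation about tori, via the observation that $Z(\widehat G)$ is a dual torus. First I would identify $Z(\widehat G)$ $\Gamma$-equivariantly with the complex dual torus $\widehat D$ of the $F$-torus $D := G/G_{\rm der}$. Indeed, for any maximal torus $T \subset G$, $X^*(Z(\widehat G)) = X^*(\widehat T)/\mathbb Z R^\vee = X_*(T)/\mathbb Z R^\vee$, where $R^\vee$ denotes the coroot lattice, and this coincides with $X_*(D)$ since the cocharacter lattice of $T \cap G_{\rm der}$ is precisely $\mathbb Z R^\vee$. Kottwitz's conventions on dual groups (\cite[\S 1]{cusptemp}) ensure this identification respects the Galois actions on both sides.

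Second, I would establish the torus case: for any $F$-torus $S$, $\dim\,(\widehat S^{I_F})_\Phi = \dim A_S$, where $A_S$ denotes the maximal $F$-split subtorus. Passing to character groups,
$$X^*\bigl((\widehat S^{I_F})_\Phi\bigr) = (X^*(\widehat S)_{I_F})^\Phi = (X_*(S)_{I_F})^\Phi,$$
and after tensoring with $\mathbb Q$ the $I_F$-coinvariants coincide with the $I_F$-invariants (being a finite group acting on a $\mathbb Q$-vector space). Hence the $\mathbb Q$-dimension equals $\dim_{\mathbb Q}(X_*(S) \otimes \mathbb Q)^\Gamma$, which by definition is $\dim A_S$.

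Third, I would compare $\dim A_D$ with the rank of the maximal $F$-split torus of $Z(G)$. The composite $Z(G)^\circ \hookrightarrow G \twoheadrightarrow D$ is surjective (because $G = Z(G)^\circ \cdot G_{\rm der}$) and has finite kernel ($Z(G)^\circ \cap G_{\rm der}$ is finite), so it is an $F$-isogeny of tori. Isogenous $F$-tori share the same $F$-split rank, so the maximal $F$-split subtorus of $Z(G)$ (which is the same as that of $Z(G)^\circ$) has dimension equal to $\dim A_D$. Combining this with the torus case applied to $S = D$ and with the first step yields the desired equality.

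The argument is essentially formal once the character-lattice identifications are in place. The only mildly delicate point is the first step: the identification $Z(\widehat G) = \widehat D$ must be verified to be $\Gamma$-equivariant with respect to the Galois action coming from Kottwitz's conventions, rather than a merely abstract isomorphism of complex tori. Beyond that, everything reduces to standard facts about coinvariants of finite group actions on lattices and about the $F$-split rank of tori.
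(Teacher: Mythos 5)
Your overall strategy matches the paper's: both proofs reduce the claim to a calculation with $\mathbb{Q}$-vector spaces carrying a $\Gamma$-action, by relating $X^*(Z(\widehat{G}))$ to $X_*(D)$ for the cocenter torus $D = G/G_{\rm der}$ and then invoking the isogeny $Z(G)^\circ \to D$. Your steps 2 and 3 are correct as stated.

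There is, however, a concrete error in your first step. You claim that the cocharacter lattice of $T_{\rm der} := T \cap G_{\rm der}$ is exactly the coroot lattice $\mathbb{Z}R^\vee$, and conclude an integral identification $X^*(Z(\widehat{G})) = X_*(D)$. This is false in general: $\mathbb{Z}R^\vee$ equals $X_*(T_{\rm sc})$, the cocharacter lattice of the maximal torus of the simply connected cover $G_{\rm sc}$, and $X_*(T_{\rm der})$ is only the \emph{saturation} of $\mathbb{Z}R^\vee$ inside $X_*(T)$. They can differ. For instance with $G = \mathrm{PGL}_2$ one has $D = 1$, so $X_*(D) = 0$, while $Z(\widehat{G}) = Z(\mathrm{SL}_2) = \mu_2$, so $X^*(Z(\widehat{G})) = \mathbb{Z}/2\mathbb{Z} \neq 0$; the proposed $\Gamma$-equivariant isomorphism $Z(\widehat{G}) \cong \widehat{D}$ does not exist. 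The identification holds only after tensoring with $\mathbb{Q}$, and this is exactly the care the paper takes: it uses Borovoi's isomorphism $X_*(T)/X_*(T_{\rm sc}) = X^*(Z(\widehat{G}))$ (with $T_{\rm sc}$, not $T_{\rm der}$) and then asserts only a rational equality $X_*(D)_{\mathbb{Q}} = X^*(Z(\widehat{G}))_{\mathbb{Q}}$. Since your argument in step 2 passes to $\mathbb{Q}$-coefficients anyway, the error is harmless for the dimension count and is easily repaired by weakening your first step to a rational identification; but as written it asserts a statement that is not true.
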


\begin{proof}
Fix an $F$-rational maximal torus $T \subset G$ and set $T_{\rm der} = T \cap G_{\rm der}$. Define the cocenter torus $D = G/G_{\rm der} = T/T_{\rm der}$. The torus $Z(G)^\circ$ is isogenous to $D$, hence there is a perfect $\Gamma$-equivariant $\mathbb Q$-valued pairing between  $X^*(Z(G)^\circ)_\mathbb Q$ and $X_*(D)_\mathbb Q$. A result of Borovoi gives a $\Gamma$-equivariant isomorphism $X_*(T)/X_*(T_{\rm sc}) =  X^*(Z(\widehat{G}))$, where $T_{\rm sc}$ denotes the pull-back of $T_{\rm der}$ along the covering $G_{\rm sc} \rightarrow G_{\rm der}$. It follows that $X_*(D)_\mathbb Q = X^*(Z(\widehat{G}))_\mathbb Q$. We obtain a perfect $\mathbb Q$-valued pairing between $\big(X^*(Z(G)^\circ)\big)_\Gamma \otimes \mathbb Q$ and $\big(X^*(Z(\widehat{G}))\big)^\Gamma \otimes \mathbb Q = X^*\big((Z(\widehat{G})^{I_F})_\Phi\big) \otimes \mathbb Q$.  The lemma follows.
\end{proof}

\section{Proof of Theorem \ref{explicit_S(G)}} \label{pf_exp_S(G)_sec}

Thanks to (\ref{delta_identity}) below, the equivalence $(i) \Leftrightarrow (iii)$ is fairly obvious from the definition of $S(G)$. We concentrate on $(i) \Leftrightarrow (ii)$. We retain the notation from $\S \ref{trans_sec}$.  

\begin{lemma} \label{Ad_lem} Suppose that the principal nilpotent element $\widehat{X^*}_{\widehat{M^*}} : = \sum_{\alpha \in \Delta_{\widehat{M^*}}} \widehat{X^*}_\alpha$ is part of a $\Gamma^*$-fixed splitting $(\widehat{T^*}, \widehat{B^*}_{\widehat{M^*}}, \widehat{X^*}_{\widehat{M^*}})$ for $\widehat{M^*}$.  Let $\widehat{m^*} \in (Z(\widehat{M^*})^{I^*_F})_{\Phi^*}$.  Then 
\begin{equation} \label{Ad_eq}
{\rm Ad}(\delta^{-1/2}_{B^*_{M^*}} \widehat{m^*} \rtimes \Phi^*)(\widehat{X^*}_{\widehat{M^*}}) = q_F \widehat{X^*}_{\widehat{M^*}}.
\end{equation}
\end{lemma}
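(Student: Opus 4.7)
The plan is to reduce the identity to a classical statement about simple coroots paired with the half-sum of positive roots. First, I would split the element as
$$\mathrm{Ad}\bigl(\delta^{-1/2}_{B^*_{M^*}}\widehat{m^*}\rtimes\Phi^*\bigr)=\mathrm{Ad}\bigl(\delta^{-1/2}_{B^*_{M^*}}\widehat{m^*}\bigr)\circ\Phi^*$$
on $\mathrm{Lie}(\widehat{M^*})$, since $\delta^{-1/2}_{B^*_{M^*}}\widehat{m^*}\in\widehat T^*$ acts via the grading and $\Phi^*$ acts on the Lie algebra in the inherited way. Because the splitting $(\widehat{T^*},\widehat{B^*}_{\widehat{M^*}},\widehat{X^*}_{\widehat{M^*}})$ is $\Gamma^*$-fixed by hypothesis, $\Phi^*(\widehat{X^*}_{\widehat{M^*}})=\widehat{X^*}_{\widehat{M^*}}$; and because $\widehat{m^*}\in Z(\widehat{M^*})$, $\mathrm{Ad}(\widehat{m^*})$ is trivial on $\mathrm{Lie}(\widehat{M^*})$. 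These two observations reduce the claim to the identity $\mathrm{Ad}(\delta^{-1/2}_{B^*_{M^*}})(\widehat{X^*}_{\widehat{M^*}})=q_F\,\widehat{X^*}_{\widehat{M^*}}$.

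Next, since $\widehat{X^*}_{\widehat{M^*}}=\sum_{\alpha\in\Delta_{\widehat{M^*}}}\widehat{X^*}_\alpha$ with each $\widehat{X^*}_\alpha$ an $\alpha$-weight vector for $\widehat{T^*}$, the identity to be proved decouples into showing
$$\alpha\bigl(\delta^{-1/2}_{B^*_{M^*}}\bigr)=q_F \quad\text{for every } \alpha\in\Delta_{\widehat{M^*}}.$$
Here one should interpret $\delta^{-1/2}_{B^*_{M^*}}$ by a chosen lift in $\widehat{T^*}^{I^*_F}$ of its class in $(\widehat{T^*}^{I^*_F})_{\Phi^*}$; the resulting equality is independent of the lift because changing the lift by $(1-\Phi^*)\widehat t$ conjugates the entire element $\delta^{-1/2}_{B^*_{M^*}}\widehat{m^*}\rtimes\Phi^*$ by $\widehat t\in\widehat{T^*}^{I^*_F}$, which fixes the line $\mathbb C\cdot\widehat{X^*}_\alpha$ and so preserves the eigenvalue statement.

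The key computation is then the evaluation $\alpha(\delta^{-1/2}_{B^*_{M^*}})=q_F$. Under the canonical identification $\iota\colon\Psi_0(M^*)^\vee\simeq\Psi_0(\widehat{M^*})$, a simple root $\alpha\in\Delta_{\widehat{M^*}}$ corresponds to a simple coroot $\beta^\vee\in\Delta_{M^*}^\vee\subset X_*(T^*)=X^*(\widehat{T^*})$. On the other hand, $\delta^{-1/2}_{B^*_{M^*}}$ is the weakly unramified character $t\mapsto|2\rho_{M^*}(t)|_F^{-1/2}$ of $T^*(F)$, and under the Kottwitz identification (\ref{Kottwitz_isom}) it corresponds morally to the element $q_F^{\rho_{M^*}}\in\widehat{T^*}$, where $\rho_{M^*}\in X^*(T^*)_\mathbb Q=X_*(\widehat{T^*})_\mathbb Q$. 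Pairing yields $\alpha(\delta^{-1/2}_{B^*_{M^*}})=q_F^{\langle\beta^\vee,\rho_{M^*}\rangle}=q_F^1=q_F$, by the classical identity $\langle\beta^\vee,\rho_{M^*}\rangle=1$ for simple roots.

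The main technical obstacle is translating the split-case evaluation $\delta^{-1/2}_{B^*_{M^*}}\leftrightarrow q_F^{\rho_{M^*}}$ into the quasi-split setting, where $T^*$ need not be $F$-split and one has to work with $\widehat{T^*}^{I^*_F}$ modulo $(1-\Phi^*)$. The cleanest way I would proceed is to invoke \cite[(3.3.2)]{stable} (which already installs $\delta_P$ as an element of $(Z(\widehat{M})^{I_F})_{\Phi_F}$ in the required Galois setting), reduce evaluation to the split Levi obtained by base change to $F^{\mathrm{un}}$, and observe that the pairing $\langle\beta^\vee,\rho_{M^*}\rangle=1$ is preserved by restriction since $\beta^\vee$ and $\rho_{M^*}$ are both defined in the $F^{\mathrm{un}}$-split situation. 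Everything else is formal once this Kottwitz-theoretic bookkeeping is correctly in place.
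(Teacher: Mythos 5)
Your core computation is exactly the paper's: decouple the $\Phi^*$ and $\widehat{m^*}$ pieces (both fix $\widehat{X^*}_{\widehat{M^*}}$ since the splitting is $\Gamma^*$-fixed and $\widehat{m^*}$ is central), then evaluate $\alpha(\delta^{-1/2}_{B^*_{M^*}})$ on a simple root by translating through the Kottwitz isomorphism to the pairing $\langle\,\cdot\,,\rho_{M^*}\rangle = 1$. The paper does the same thing by computing $\delta_{B^*_{M^*}}^{-1/2}(\varpi^\alpha) = q_F$ directly.

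However, your aside on lift-independence is not correct as stated. Replacing a lift $s$ of $\delta^{-1/2}_{B^*_{M^*}}\widehat{m^*}$ by $s\,\widehat t\,\Phi^*(\widehat t)^{-1}$ indeed conjugates $s\rtimes\Phi^*$ by $\widehat t$, and conjugation preserves eigenvalues — but the eigenvector moves: $\widehat t\cdot\widehat{X^*}_{\widehat{M^*}} = \sum_\alpha\alpha(\widehat t)\widehat{X^*}_\alpha$ is not a scalar multiple of $\widehat{X^*}_{\widehat{M^*}}$ unless $\widehat t$ is central in $\widehat{M^*}$. (Also, the line $\mathbb C\widehat{X^*}_\alpha$ is not an eigenline of ${\rm Ad}(s\rtimes\Phi^*)$ when $\Phi^*$ moves $\alpha$, so "preserves the eigenvalue statement" does not follow.) Concretely, if $\Phi^*$ permutes two simple roots $\alpha_1\leftrightarrow\alpha_2$ of $\widehat{M^*}$, then $\alpha_1(s(1-\Phi^*)\widehat t) = \alpha_1(s)\,\alpha_1(\widehat t)\,\alpha_2(\widehat t)^{-1}\neq\alpha_1(s)$ in general, so the individual eigenvalues genuinely depend on the lift; only the product over a $\Phi^*$-orbit is lift-independent. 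The remedy is what the paper does and what you already need for the final evaluation: fix the specific ($\Phi^*$-fixed) lift $\lambda\mapsto\delta_{B^*_{M^*}}^{-1/2}(\varpi^\lambda)$ of $\delta^{-1/2}_{B^*_{M^*}}$ in $\widehat{T^*}^{I^*_F}$, for which $\alpha(\cdot) = q_F$ holds on the nose for every $\alpha\in\Delta_{\widehat{M^*}}$. That is all that is used downstream (the pair only needs to exist, not to be canonical), so the lemma should be read as producing one good representative rather than as a lift-invariant identity.
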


\begin{proof}
Thanks to the Kottwitz isomorphism
$${\rm Hom}_{\rm grp}(T^*(F)/T^*(F)_1, \mathbb C^\times) = (\widehat{T^*}^{I^*_F})_{\Phi^*},$$ 
$\delta^{-1/2}_{B^*_{M^*}}$ is naturally an element of $(\widehat{T^*}^{I^*_F})_{\Phi^*}$. The left hand side of (\ref{Ad_eq}) is well-defined. Let $\varpi \in F^\times$ be any uniformizer, and let $\rho^*_{\widehat{M^*}}$ be the half-sum of the $\widehat{B^*}_{\widehat{M^*}}$-positive roots in $X^*(\widehat{T^*})$. 
The homomorphism $X^*(\widehat{T^*}) \rightarrow \mathbb C^\times$ given by $\lambda \mapsto \delta_{B^*_{M^*}}^{-1/2}(\varpi^\lambda)$ corresponds to an element of $\widehat{T^*}^{I^*_F}$ which projects to $\delta_{B^*_{M^*}}^{-1/2}$ under $\widehat{T^*}^{I^*_F} \rightarrow (\widehat{T^*}^{I^*_F})_{\Phi^*}$; denote this element also by $\delta^{-1/2}_{B^*_{M^*}}$. It is clear that 
${\rm Ad}(\delta^{-1/2}_{B^*_{M^*}})$ acts on $\widehat{X^*}_{\widehat{M^*}}$ by the scalar $\delta^{-1/2}_{B^*_{M^*}}(\varpi^\alpha) = | \varpi^{-\langle \alpha, \rho^*_{\widehat{M^*}} \rangle}|_F = q_F$ (here $\alpha \in \Delta_{\widehat{M^*}}$ is arbitrary). The result follows because ${\rm Ad}(\widehat{m^*} \rtimes \Phi^*)$ fixes $\widehat{X^*}_{\widehat{M^*}}$.
\end{proof}

As in $\S \ref{trans_sec}$, we fix $\psi_0 \in \Psi_M$ as needed to define the normalized transfer homomorphism $\tilde{t}_{A^*, A}$. We have the identity
\begin{equation} \label{delta_identity}
\delta_{B^*_{M^*}}^{-1/2} = \delta_{B^*}^{-1/2} \hat{\psi}_0(\delta_P^{1/2})
\end{equation}
in $\widehat{T^*}^{I^*_F}$. Recall we have a canonical identification $[\widehat{G^*}^{I^*_F} \rtimes \Phi^*]_{\rm ss}/ \widehat{G^*}^{I^*_F} = [\widehat{G}^{I_F} \rtimes \Phi]_{\rm ss}/ \widehat{G}^{I_F}$. Therefore elements of $S(G)$ can be represented by elements of the form $\delta_{B^*_{M^*}}^{-1/2}\widehat{m^*}$ where $\widehat{m^*}$ is as in Lemma \ref{Ad_lem}. Further, by Proposition \ref{Steinberg_style_prop}(a), 
$\widehat{M^*}^{I^*_F}$ has a splitting of the form $(\widehat{T^*}^{I^*_F}, \widehat{B^*}^{I^*_F}_{\widehat{M^*}}, \widehat{X^*}_{\widehat{M^*}})$ where $\widehat{X^*}_{\widehat{M^*}}$ is as in Lemma \ref{Ad_lem}.  That lemma therefore implies that every element of $S(G)$ has the property stated in Theorem \ref{explicit_S(G)} (ii).

Conversely, suppose $(\hat{g} \rtimes \Phi, x) \in \mathcal P^\Phi_{\rm reg}(\widehat{M}^{I_F})$. We want to prove that $\hat{g} \rtimes \Phi$ belongs to $S(G)$. As we may work entirely in $\widehat{M^*}^{I^*_F}$, we may as well assume $M^* = G^*$. Now $x \in \mathcal N(\widehat{G^*}^{I^*_F})$ has $\Phi^*(x) = x$, so in fact $x$ is a principal nilpotent in ${\rm Lie}(\widehat{G^*}^{\Gamma^*_F})$. By Proposition \ref{Steinberg_style_prop}(a),  there is a splitting $(\widehat{T^*}^{\Gamma^*_F}, \widehat{B^*}^{\Gamma^*_F}, \widehat{X^*})$ for $\widehat{G^*}^{\Gamma^*_F}$, where 
$\widehat{X^*} =\sum_{\alpha \in \Delta_{\widehat{G^*}}} \widehat{X^*}_\alpha$ comes from a $\Gamma^*_F$-fixed splitting $(\widehat{T^*}, \widehat{B^*}, \widehat{X^*})$ for $\widehat{G^*}$. Being a principal nilpotent element in $\mathcal N(\widehat{G^*}^{\Gamma^*_F})$, $x$ is $\widehat{G^*}^{\Gamma^*_F}$-conjugate to such an element $\widehat{X^*}$; hence we may assume $x = \widehat{X^*}$. We apply Lemma \ref{nilpotent_lem} with $H = \widehat{G^*}^{I^*_F}$, $\tau = \Phi^*$, $\lambda = q_F$, $e = \widehat{X^*}$, and $\delta = \delta^{-1/2}_{B^*}$, where $(T^*, B^*)$ corresponds to $(\widehat{T^*}, \widehat{B^*})$.  Lemma \ref{nilpotent_lem} asserts that we may assume $(\hat{g} \rtimes \Phi^*, \widehat{X^*}) = (\delta_{B^*}^{-1/2} \hat{z} \rtimes \Phi^*, \widehat{X^*})$ for some $\hat{z} \in Z(\widehat{G^*}^{I^*_F})$.  By Lemma \ref{Z->>pi_0}(ii),  $\widehat{G^*}^{I^*_F} = Z(\widehat{G^*})^{I^*_F} \cdot \widehat{G^*}^{I^*_F, \circ}$, so we may write $\hat{z} = \hat{z}_1 \cdot \hat{z}_2$ where $\hat{z}_1 \in Z(\widehat{G^*})^{I^*_F}$ and $\hat{z}_2 \in Z(\widehat{G^*}^{I^*_F, \circ})$. But $\hat{z}_2$ is an element of $\widehat{T^*}^{I^*_F, \circ}$ such that ${\rm Ad}(\hat{z}_2)(\widehat{X^*}) = \widehat{X^*}$. 
Thus in fact $\hat{z}_2$ belongs to $Z(\widehat{G^*})$, and hence $\hat{z}$ belongs to $Z(\widehat{G^*})^{I^*_F}$.  This proves that $(\widehat{g} \rtimes \Phi^*, x)$ belongs to $S(G)$.
\qed

\section{A transfer map $\Pi(G,K) \rightarrow \Pi(G^*,K^*)$} \label{transfer_map_sec}

Let $K \subset G(F)$ and $K^* \subset G^*(F)$ be special maximal parahoric subgroups.  We shall define an operation
\begin{align*}
\Pi(G/F, K) ~ &{\hookrightarrow} ~ \Pi(G^*/F, K^*) \\
\pi &\mapsto \pi^*
\end{align*}
which is dual to $\tilde{t}: \mathcal H(G^*(F), K^*) \rightarrow \mathcal H(G(F), K)$. We identify $^LG = \, ^LG^*$ as in Remark \ref{dual_innertwist_rem}. Given $\pi$ we have $s(\pi) \in S(G) \subseteq [\widehat{G^*}^{I^*_F} \rtimes \Phi^*]_{\rm ss}/ \widehat{G^*}^{I^*_F} = S(G^*)$.  
\begin{defn}
We define $\pi^*  \in \Pi(G^*/F, K^*)$ to be the unique isomorphism class with $s(\pi^*) = s(\pi)$.
\end{defn}

Clearly $\pi^*$ is characterized by the equalities for all $f^* \in \mathcal H(G^*(F), K^*)$
\begin{equation*} 
{\rm tr}(\tilde{t}(f^*) \, | \, \pi)  = S(\tilde{t}(f^*))(s(\pi)) = S(f^*)(s(\pi^*)) = {\rm tr}(f^* \, | \, \pi^*).
\end{equation*}
The middle equality follows from the diagram in Definition \ref{tilde-t_defn} (taking $J = K$ and $J^* = K^*$).
We remark that the character identity directly characterizes $\pi$ in terms of $\pi^*$ because $f^* \mapsto \tilde{t}(f^*)$ gives a {\em surjective} map $\mathcal H(G^*(F), K^*) \rightarrow \mathcal H(G(F), K)$ (Cor.~\ref{tilde-t_surj}).

\section{Relation with local Langlands correspondence} \label{rel_LLC_sec}

\subsection{Construction of $s$-parameter in Deligne-Langlands correspondence}
The Satake parameter $s(\pi)$ should give us part of the local Langlands parameter associated to $\pi \in \Pi(G/F, J)$.  

\begin{conj} \label{LLC_relation}
Let $W'_F := W_F \rtimes \mathbb C$ be the Weil-Deligne group.  If $\pi \in \Pi(G/F, J)$ has local Langlands parameter $\varphi_\pi: W'_F \rightarrow \, ^LG$, then 
\begin{equation} \label{s-param_eq}
\varphi_\pi(\Phi) = s(\pi)
\end{equation}
as elements in $[\widehat{G} \rtimes \Phi]_{\rm ss}/ \widehat{G}$.
\end{conj}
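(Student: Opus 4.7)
\textbf{Plan for Theorem \ref{2nd_main_thm}} (the inner forms of ${\rm GL}_n$ case of Conjecture \ref{LLC_relation}).

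The strategy is to chase the supercuspidal support of $\pi$ through two parallel computations: on one side, the definition of $s(\pi)$, and on the other, the construction of $\varphi_\pi$ via LLC+ together with the Jacquet--Langlands correspondence. First, let $\pi \in \Pi(G/F, J)$ have supercuspidal support $(M, \chi)_G$, so that $\pi$ is an irreducible subquotient of $i^G_P(\chi)$. The definition of $s(\pi)$ in $\S\ref{param_const_gen}$ depends only on the $W(G,A)$-orbit of $\chi$. On the other hand, the compatibility of LLC+ with parabolic induction forces $\varphi_\pi$ to factor as $W'_F \xrightarrow{\varphi_\chi} \, ^LM \hookrightarrow \, ^LG$, where $\varphi_\chi$ is the parameter of $\chi$ viewed as a character of $M(F)$. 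Thus $\varphi_\pi(\Phi)$ also depends only on $\chi$ (and on the minimal Levi $M$), and the problem reduces to proving that $\varphi_\chi(\Phi)$ equals the image of $\chi \in (Z(\widehat{M})^{I_F})_\Phi$ in $[\widehat{M}^{I_F} \rtimes \Phi]_{\rm ss}/\widehat{M}^{I_F}$.

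Second, since $G$ is an inner form of ${\rm GL}_n$, the minimal Levi $M$ is a product of groups of the form ${\rm GL}_{m_i}(D_i)$ for central division algebras $D_i/F$. The Jacquet--Langlands correspondence, sharpened by LLC+, furnishes a bijection $\Pi(M/F, J_M) \leftrightarrow \Pi(M^*/F, J^*_{M^*})$ preserving Langlands parameters under the canonical identification $\, ^LM = \, ^LM^*$, where $M^* = \prod {\rm GL}_{m_i d_i}$ is the quasi-split inner form of $M$. Moreover, this bijection intertwines the Kottwitz homomorphisms: the character $\chi^*$ matching $\chi$ is, up to the identification $(Z(\widehat{M})^{I_F})_\Phi = (\widehat{T^*}^{I^*_F})_{\Phi^*}/W(M^*, A^*)$ coming from $\tilde{t}_{A^*, A}$, the weakly unramified character of $T^*$ corresponding to $\chi$ (one uses that in type $A$, the Kottwitz map is essentially the valuation of the reduced norm, and JL respects this by a well-known calculation on spherical characters). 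We may therefore replace $(M, \chi)$ by $(M^*, \chi^*)$ with $M^*$ split and $\chi^*$ a weakly unramified character of a maximal torus $T^*$ inflated to $M^*$.

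Third, the local Langlands correspondence for tori is explicit: the parameter of a weakly unramified character $\chi^*$ of $T^*(F)$ sends $\Phi$ to $\hat{\chi}^* \rtimes \Phi^*$, where $\hat{\chi}^*$ is any lift to $\widehat{T^*}^{I^*_F}$ of the element of $(\widehat{T^*}^{I^*_F})_{\Phi^*}$ attached to $\chi^*$ via Kottwitz. Composing with the embedding $\, ^LT^* \hookrightarrow \, ^LM^*$ dictated by the Borel $B^*_{M^*} = B^* \cap M^* \subset M^*$ and applying LLC+ for normalized parabolic induction, $\varphi_{\chi^*}(\Phi)$ becomes $\delta_{B^*_{M^*}}^{-1/2} \, \hat{\chi}^* \rtimes \Phi^*$ in $[\widehat{M^*}^{I^*_F} \rtimes \Phi^*]_{\rm ss}/\widehat{M^*}^{I^*_F}$. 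On the other hand, by the definition (\ref{t-tilde}) of $\tilde{t}_{A^*, A}$ and the identity (\ref{delta_identity}), $s(\pi)$ is represented by precisely this same element (now viewed in $[\widehat{G}^{I_F} \rtimes \Phi]_{\rm ss}/\widehat{G}^{I_F}$ via Remark \ref{dual_innertwist_rem}). This gives $\varphi_\pi(\Phi) = s(\pi)$.

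The main technical obstacle is the bookkeeping of modulus characters: the normalized transfer $\tilde{t}_{A^*, A}$ carries a $\delta_P^{1/2}/\delta_{B^*}^{1/2}$ twist, normalized parabolic induction on the quasi-split side introduces $\delta_{B^*_{M^*}}^{1/2}$, and LLC for tori involves no twist, yet all of these together must conspire -- via (\ref{delta_identity}) and Lemma \ref{Ad_lem} -- to yield the element dictated by LLC+. The definition of $\tilde{t}_{A^*, A}$ in \cite[$\S11$]{stable} was engineered precisely to absorb these shifts, so the matching is essentially built in; nevertheless, verifying that the half-sums of roots for $B^* \cap M^* \subset M^*$ and for $P \subset G$ correspond correctly under the $F^{\rm un}$-rational $\psi_0 \in \Psi_M$ is the technical heart of the argument and requires care.
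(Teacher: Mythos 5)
Your overall shape is right: reduce via LLC+ to a Levi-level statement, feed Jacquet--Langlands into the mix, then compare to the definition of $\tilde{t}_{A^*,A}$ using the modulus identity $\prod_i \delta^{-1/2}_{B^*_i} = \delta^{-1/2}_{B^*_{M^*}} = \delta^{-1/2}_{B^*}\hat\psi_0(\delta_P^{1/2})$. But your second step contains a genuine error, and it is precisely the error the paper warns against in $\S\ref{transfer_map_sec}$. You assert that Jacquet--Langlands, sharpened by LLC+, gives a bijection $\Pi(M/F, J_M) \leftrightarrow \Pi(M^*/F, J^*_{M^*})$ \emph{preserving Langlands parameters}. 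It does not, and no such bijection exists. Jacquet--Langlands is a bijection on essentially square-integrable representations, not on parahoric-spherical ones: it sends ${\bf 1}_{D^\times}$ to the Steinberg of ${\rm GL}_2(F)$, which has no $K^*$-fixed vectors. Conversely, the map $\pi \mapsto \pi^*$ of $\S\ref{transfer_map_sec}$, which \emph{is} a bijection between the two parahoric-spherical sets, does \emph{not} preserve Langlands parameters (it sends ${\bf 1}_{D^\times}$ to ${\bf 1}_{{\rm GL}_2}$, whose $\varphi$ is different from $\varphi_{{\bf 1}_{D^\times}}$). So ``this bijection intertwines the Kottwitz homomorphisms'' is asserting compatibility between two maps that disagree, and the step cannot be completed as written.

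The ingredient you are missing is exactly Lemma~\ref{JL_lemma} of the paper: after reducing (via unramified twisting) to the trivial representation ${\bf 1}_i$ of ${\rm GL}_{m_i}(D)$, one does \emph{not} apply JL to ${\bf 1}_i$ directly. Instead one observes that ${\bf 1}_i$ and the Steinberg ${\bf St}_i$ of ${\rm GL}_{m_i}(D)$ share the same supercuspidal support, so LLC+ gives $\varphi_{{\bf 1}_i}|_{W_F} = \varphi_{{\bf St}_i}|_{W_F}$; then JL sends ${\bf St}_i$ to the Steinberg ${\bf St}^*_i$ of ${\rm GL}_{n_i}(F)$, and a direct computation gives $\varphi_{{\bf St}^*_i}(\Phi) = \delta^{-1/2}_{B^*_i}\rtimes\Phi$. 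This two-step ``trivial $\leftrightarrow$ Steinberg (same support) $\xrightarrow{\rm JL}$ Steinberg$^*$'' is the technical heart, and it is what justifies the modulus conspiracy you correctly identify at the end. As written, your proposal jumps straight from $\chi$ to a ``matching $\chi^*$'' and waves at a ``well-known calculation on spherical characters,'' which is exactly where the argument needs the above lemma. I would also note that your first paragraph's reduction -- to showing $\varphi_\chi(\Phi)$ equals ``the image of $\chi$ in $[\widehat{M}^{I_F}\rtimes\Phi]_{\rm ss}/\widehat{M}^{I_F}$'' -- drops the $\delta^{-1/2}_{B^*_{M^*}}$ shift built into $\tilde{t}_{A^*,A}$; that shift is not optional and is what the Steinberg computation ultimately produces.
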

Note that we still denote by $s(\pi)$ its image under the natural map $[\widehat{G}^{I_F} \rtimes \Phi]_{\rm ss} / \widehat{G}^{I_F} \rightarrow [\widehat{G} \rtimes \Phi]_{\rm ss}/ \widehat{G}$.

\begin{Remark}
Put another way, the conjecture predicts the $s$-parameter in the Deligne-Langlands triple $(s,u, \rho)$ which is hypothetically attached to a representation $\pi$ with Iwahori-fixed vectors (note that the works of Kazhdan-Lusztig \cite{KL}, Lusztig \cite{L1,L2} construct the entire triple unconditionally for many $p$-adic groups, but not for the most general $p$-adic groups).
\end{Remark}

\begin{Remark}
This is similar to \cite[Thm.~2]{Mis}, which discusses the case where $G$ is quasi-split and split over a tamely ramified extension of $F$.
\end{Remark}

When $G/F$ is quasi-split, (\ref{s-param_eq}) is predicted by the compatibility of the local Langlands correspondence (LLC) with normalized parabolic induction, as follows.  Recall the property LLC+ (\cite[$\S5.2$]{stable}), which is LLC for $G$ and all of its $F$-Levi subgroups, plus the compatibility of infinitesimal characters (restrictions of Langlands parameters to $W_F$) with respect to normalized parabolic induction. Write $G,T, B$, etc.~in place of $G^*, T^*, B^*$ etc.~ from $\S\ref{trans_sec}$. By \cite[$\S11.5$]{stable}, there is a weakly unramified character $\chi$ on $T(F)$ such that $\pi$ is a subquotient of $i^G_B(\chi)$. Assuming LLC+ holds, we expect $\varphi_\pi|_{W_F} = \varphi_\chi|_{W_F}$, the latter taking values in $^LT \hookrightarrow \,^LG$. Now, the local Langlands correspondence for tori implies that $\varphi_\chi$ exists unconditionally, and has $\varphi_\chi(\Phi) = \chi \rtimes \Phi$, where on the right hand side $\chi$ is viewed as an element of $(\widehat{T}^{I_F})_{\Phi_F}$ via the Kottwitz isomorphism. But clearly $s(\pi) = \chi \rtimes \Phi$ as well.

Thus, the conjecture gives an essentially new prediction only when $G$ is not quasi-split.  In fact, its content is that the normalized transfer homomorphisms, used to define $s(\pi)$ in the general case, are really telling us what $\varphi_\pi(\Phi)$ should be. For example, if $D/F$ is a quaternion algebra over its center $F$ and $G = D^\times$, $J = \mathcal O^\times_D$, and $\pi = {\bf 1}_{D^\times}$ (the trivial representation of $D^\times$ on $\mathbb C$), then Conjecture \ref{LLC_relation} predicts that $\varphi_\pi(\Phi) = 
{\rm diag}(q^{-1/2}, q^{1/2}) \rtimes \Phi$, where $q$ is the cardinality of the residue field of $F$.  This is indeed the case (cf.~e.g.~Lemma \ref{JL_lemma} or \cite[Thm.~4.4]{PrRa}).

\subsection{Proof of Conjecture \ref{LLC_relation} for inner forms of ${\rm GL}_n$}

Suppose $G^* = {\rm GL}_n$ and that $G = {\rm GL}_m(D)$, where $D$ is a central division algebra over $F$ with ${\rm dim}_F(D) = d^2$, and $m$ is an integer with $n = md$. We will identify ${\rm GL}_m(D)$ with an inner form $(G, \Psi)$ of $G^*$.  We will assume that LLC+ holds for the group $G$. Of course the local Langlands correspondence is known for ${\rm GL}_n$, and it is also known that ${\rm GL}_n$ satisfies LLC+ (cf.~\cite[Rem.~13.1.1]{HRa} or \cite{Sch}). The local Langlands correspondence for the inner form $G$ is also well-understood, and presumably the property LLC+ similarly holds for $G$. This can likely be extracted from some recent works such as \cite{ABPS, Bad1, HiSa}.  We will not verify that $G$ satisfies LLC+ here, and instead we leave this task to another occasion.\footnote{The property LLC+ for $G$ has recently been verified by Jon Cohen and will appear as part of his forthcoming University of Maryland PhD thesis.}

Choose $A, A^*, \psi_0 \in \Psi_M$ as in (\ref{t-tilde}), and assume $A^*$ is the standard diagaonal torus in $G^* = {\rm GL}_n$. Given $\pi \in \Pi(G/F, J)$, its supercuspidal support is $(M, \chi)_G$ for some unramified character $\chi \in X(M)$.  The $F$-Levi subgroup $M \subset G$ (resp.~$M^* := \psi_0(M) \subset G^*$) has the form
$$
M \cong \prod_{i=1}^r {\rm GL}_{m_i}(D), \hspace{.2in}  (\mbox{resp.}~ M^* \cong \prod_{i=1}^r {\rm GL}_{n_i}, \,\, \mbox{a standard Levi subgroup of ${\rm GL}_n$})
$$
for some integers $m_i, n_i$ with $m_id= n_i, \, \forall i$ and $\sum_i n_i = n$.  It is harmless to assume that $\psi_0$ induces for all $i$ an inner twisting ${\rm GL}_{m_i}(D) \rightarrow {\rm GL}_{n_i}$ which is the identity on $F^s$-points (only the Galois actions differ). It is also harmless to assume that $\psi_0 \in \Psi_M$ where $\Psi_M$ is defined as in $\S\ref{trans_sec}$ using the standard upper triangular Borel subgroup $B^* \subset {\rm GL}_n$, and that $B^*_{M^*} := B^* \cap M^*$ has the form $\prod_i B^*_i$ where each $B^*_i$ is the upper triangular Borel subgroup of ${\rm GL}_{n_i}$. 

Write $\chi = \chi_1 \boxtimes \cdots \boxtimes \chi_r$ where $\chi_i \in X({\rm GL}_{m_i}(D))$. By LLC+ for $G$, we have equality of $\widehat{G}$-conjugacy classes
\begin{equation*}
\varphi_\pi(\Phi) =\varphi_{\chi_1}(\Phi) \times \cdots \times \varphi_{\chi_r}(\Phi).
\end{equation*}
Let ${\rm Nrd}_i$ denote the reduced norm homomorphism ${\rm GL}_{m_i}(D) \rightarrow \mathbb G_m(F)$. 
We may write $\chi_i = \eta_i \circ {\rm Nrd}_i$ for a unique unramified character $\eta_i: F^\times \rightarrow \mathbb C^\times$. Use the same symbol $\eta_i$ to 
denote $\eta_i(\varpi_F) \in \mathbb C^\times$ (here $\varpi_F \in F^\times$ is a 
uniformizer corresponding to $\Phi$ under the Artin reciprocity map).  The Langlands dual of the homomorphism ${\rm Nrd}_i$ is the diagonal embedding ${\rm diag}_i: \mathbb G_m(\mathbb C) \rightarrow {\rm GL}_{n_i}(\mathbb C)$. If $z_{\eta_i} \in Z^1(W_F, \mathbb G_m(\mathbb C))$ (resp.~$z_{\chi_i} \in Z^1(W_F, Z({\rm GL}_{n_i}(\mathbb C)))$) is a 1-cocycle corresponding to $\eta_i$ (resp.~$\chi_i$) under Langlands duality for tori (resp.~quasi-characters), we have $z_{\eta_i}(\Phi) = \eta_i \in \mathbb C^\times$ (resp. $z_{\chi_i}(\Phi) = {\rm diag}_i(\eta_i) \in Z({\rm GL}_{n_i}(\mathbb C))$).

The local Langlands correspondence for ${\rm GL}_{m_i}(D)$ respects twisting by unramified characters (cf.~e.g.~\cite[(4.0.5)]{stable}). We can view the representation $\chi_i$ as the twist of the trivial representation by the quasi-character $\chi_i$.  So in view of the above paragraph we have 
\begin{equation*}
\varphi_{\chi_i}(\Phi) = z_{\chi_i}(\Phi) \, \varphi_{{\bf 1}_i}(\Phi) = {\rm diag}_i(\eta_i) \, \varphi_{{\bf 1}_i}(\Phi),
\end{equation*} 
where ${\bf 1}_i$ is the trivial representation of ${\rm GL}_{m_i}(D)$ on $\mathbb C$.  Thus we have
\begin{equation*}
\varphi_\pi(\Phi) = {\rm diag}_1(\eta_1) \, \varphi_{{\bf 1}_1}(\Phi) \times \cdots \times {\rm diag}_r(\eta_r) \, \varphi_{{\bf 1}_r}(\Phi).
\end{equation*} 

\begin{lemma} \label{JL_lemma} In the notation above we have $\varphi_{{\bf 1}_i}(\Phi) = \delta^{-1/2}_{B^*_i} \rtimes \Phi$, where the modulus character is viewed as a diagonal element in ${\rm GL}_{n_i}(\mathbb C)$.
\end{lemma}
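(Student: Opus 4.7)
The plan is to combine three ingredients: classical Jacquet--Langlands for $D^\times$; the assumed LLC+ for $G_i := {\rm GL}_{m_i}(D)$; and an explicit computation of the modulus character of a minimal $F$-parabolic of $G_i$.

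First I would identify the supercuspidal support of ${\bf 1}_i$. Let $M_i = (D^\times)^{m_i}$ be a minimal $F$-Levi of $G_i$ and $P_i \supset M_i$ a minimal $F$-parabolic. A standard Frobenius-reciprocity calculation with the normalized Jacquet functor ($r^{G_i}_{P_i}({\bf 1}_i) = \delta_{P_i}^{-1/2}$) shows that ${\bf 1}_i \hookrightarrow i^{G_i}_{P_i}(\delta_{P_i}^{-1/2})$, so by \cite[$\S 11.5$]{stable} the supercuspidal support of ${\bf 1}_i$ is the cuspidal pair $(M_i, \delta_{P_i}^{-1/2})$. Using that left multiplication by $m \in D^\times$ on $D$ has $F$-determinant ${\rm Nrd}(m)^d$, a direct root-by-root computation yields
\[
\delta_{P_i}^{-1/2}(m_1, \ldots, m_{m_i}) = \prod_{j=1}^{m_i} \bigl| {\rm Nrd}(m_j) \bigr|_F^{d(2j-m_i-1)/2},
\]
so that the restriction to the $j$-th factor is $\eta_j \circ {\rm Nrd}$ with $\eta_j = |\cdot|_F^{d(2j-m_i-1)/2}$ and $\eta_j(\varpi_F) = q_F^{d(m_i+1-2j)/2}$.

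Next I would compute the L-parameter of each character $\eta_j \circ {\rm Nrd}$ on $D^\times$. The only input needed is the classical JL identification of the trivial representation of $D^\times$ with the Steinberg representation ${\rm St}_d$ of ${\rm GL}_d(F)$: under LLC for the inner form $D^\times$ of ${\rm GL}_d$ this forces $\varphi_{{\bf 1}_{D^\times}}(\Phi) = \varphi_{{\rm St}_d}(\Phi) = \delta_{B^*_d}^{-1/2}$ as a diagonal element of ${\rm GL}_d(\mathbb C)$. By the standard twist rule for LLC we then obtain $\varphi_{\eta_j \circ {\rm Nrd}}(\Phi) = \eta_j(\varpi_F) \cdot \delta_{B^*_d}^{-1/2}$.

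Finally I would invoke LLC+ for $G_i$ to write $\varphi_{{\bf 1}_i}|_{W_F}$ as the L-parameter of the cuspidal datum $\delta_{P_i}^{-1/2}$ on $M_i$, composed with the standard block-diagonal embedding $\widehat{M_i^*} = ({\rm GL}_d(\mathbb C))^{m_i} \hookrightarrow \widehat{G_i^*} = {\rm GL}_{n_i}(\mathbb C)$. The factor-by-factor computation then exhibits $\varphi_{{\bf 1}_i}(\Phi)$ as a block-diagonal matrix whose multiset of diagonal entries, via the reparametrization $l = d(j-1)+k$ with $0 \le k \le d-1$, equals $\{ q_F^{(n_i-1-2l)/2} : 0 \le l \le n_i - 1 \}$, which is exactly the multiset of diagonal entries of $\delta_{B^*_i}^{-1/2}$ in ${\rm GL}_{n_i}(\mathbb C)$; this yields the claimed equality in $[\widehat{G_i^*} \rtimes \Phi]_{\rm ss}/\widehat{G_i^*}$. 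The main potential pitfall is Step 2: the L-parameter of $\eta \circ {\rm Nrd}$ on $D^\times$ is \emph{not} the naive scalar $\eta(\varpi_F) \cdot I_d$ but rather $\eta(\varpi_F) \cdot \delta_{B^*_d}^{-1/2}$, reflecting precisely the Steinberg-type shift built into classical JL (as in the quaternion example recorded right after Conjecture \ref{LLC_relation}); once this normalization is correctly installed, the rest is bookkeeping.
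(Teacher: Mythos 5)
Your argument is correct, but it takes a different path than the paper's. The paper's proof is short and works entirely at the level of ${\rm GL}_{m_i}(D)$: it invokes LLC+ to replace ${\bf 1}_i$ by the Steinberg representation ${\bf St}_i$ (same supercuspidal support, hence same $\varphi(\Phi)$), then uses the generalized Jacquet--Langlands correspondence for ${\rm GL}_{m_i}(D)$ together with the identity ${\rm JL}({\bf St}_i) = {\bf St}^*_i$ to reduce to the Steinberg representation of ${\rm GL}_{n_i}(F)$, where the answer is immediate. You instead descend to the minimal Levi $M_i = (D^\times)^{m_i}$: you identify the cuspidal support of ${\bf 1}_i$ as $(M_i, \delta_{P_i}^{-1/2})$, pin down the parameter of a character of $D^\times$ using only the classical Jacquet--Langlands correspondence ${\bf 1}_{D^\times} \leftrightarrow {\rm St}_d$ plus compatibility with unramified twists, and then recover the diagonal of $\delta_{B^*_i}^{-1/2}$ by an explicit modulus-character bookkeeping (your formulas for $\delta_{P_i}^{-1/2}$, $\eta_j(\varpi_F)$, and the reindexing $l = d(j-1)+k$ all check out, giving the multiset $\{q_F^{(n_i-1-2l)/2}\}$). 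The trade-off: your route needs only the $m=1$ case of JL together with LLC+ for $G_i$, and so avoids having to quote the Badulescu fact ${\rm JL}({\bf St}_i)={\bf St}^*_i$ for general $m_i$; the price is the explicit combinatorics. The paper's route is more conceptual and shorter, but leans on the stronger input about how generalized JL acts on Steinberg. Both are valid under the standing hypothesis that LLC+ holds for inner forms of ${\rm GL}_n$.
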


\begin{proof}
Let ${\bf St}_i$ (resp.~${\bf St}^*_i$) denote the Steinberg representation of ${\rm GL}_{m_i}(D)$ (resp.~${\rm GL}_{n_i}(F)$). Note that this has the same supercuspidal support as ${\bf 1}_i$ (resp.~${\bf 1}^*_i$). By LLC+ for ${\rm GL}_{m_i}(D)$, we see that
$
\varphi_{{\bf 1}_i}(\Phi) = \varphi_{{\bf St}_i}(\Phi).
$
The Jacquet-Langlands correspondence gives a distinguished bijection between the sets of isomorphism classes of essentially square-integrable smooth irreducible representations
\begin{equation*}
{\rm JL} \, : \, \Pi^2({\rm GL}_{m_i}(D)) ~ \widetilde{\rightarrow} ~ \Pi^2({\rm GL}_{n_i}(F)).
\end{equation*}
The Langlands parameter of $\pi_i \in \Pi^2({\rm GL}_{m_i}(D))$ is that of ${\rm JL}(\pi_i) \in \Pi^2({\rm GL}_{n_i}(F))$ (cf.~e.g.~\cite{HiSa} or \cite{Bad1}).  Furthermore, ${\rm JL}({\bf St}_i) = {\bf St}^*_i$ (cf.~\cite[$\S7.2$]{Bad2}).  Thus we get
\begin{equation*}
\varphi_{{\bf 1}_i}(\Phi) = \varphi_{{\bf St}_i}(\Phi)  = \varphi_{{\bf St}^*_i}(\Phi) = \delta_{B^*_i}^{-1/2} \rtimes \Phi,
\end{equation*}
the last equality because ${\bf St}^*_i$ is a quotient of $i^{{\rm GL}_{n_i}}_{B^*_i}(\delta_{B^*_i}^{-1/2})$.
 \end{proof}
Therefore we have
\begin{equation} \label{last_thing}
\varphi_\pi(\Phi) = [\prod_{i} {\rm diag}_i(\eta_i) \, \delta^{-1/2}_{B^*_i}] \rtimes \Phi.
\end{equation}
It is easy to see, using the equalities $\prod_i \delta^{-1/2}_{B^*_i} = \delta_{B^*_{M^*}}^{-1/2} =  \delta^{-1/2}_{B^*}\hat{\psi}_0(\delta_{P}^{1/2})$ and (\ref{t-tilde}), that (\ref{last_thing}) is the image of $s(\pi)$ in $[\widehat{G} \rtimes \Phi]_{\rm ss}/\widehat{G}$. This completes the proof of Theorem \ref{2nd_main_thm}. \qed

\subsection{Compatibility with generalized Jacquet-Langlands correspondence}
Now return to the usual notation, where $G$ is general and is identified with an inner form $(G,\Psi)$ of a quasi-split group $G^*$. 
Let us identify $^LG = \, ^LG^*$ as in Remark \ref{dual_innertwist_rem}.

Given $\pi \in \Pi(G/F, J)$, we may choose any $\pi^* \in \Pi(G^*/F, J^*)$ such that $s(\pi^*) = s(\pi)$.  Note that if $J^* = K^*$, then $\pi^*$ is unique, but in general it will not be.

Since $s(\pi) = s(\pi^*)$ by construction of $\pi^*$, we expect $\varphi_\pi(\Phi) = \varphi_{\pi^*}(\Phi)$.  Since $\pi$ and $\pi^*$ are $J$-(resp.~$J^*$)-spherical, $\varphi_\pi$ and $\varphi_{\pi^*}$ should satisfy $\varphi_\pi(I_F) = \varphi_{\pi^*}(I_F) = 1 \rtimes I_F$, and so we expect $\varphi_{\pi}|_{W_F} = \varphi_{\pi^*}|_{W_F}$.  This is compatible with what a ``generalized Jacquet-Langlands correspondence'' would entail, at least on the level of infinitesimal classes (cf.~\cite[$\S5.1$]{stable}). Namely, $\pi$ should give rise to the composition
$$
\xymatrix{
W'_F \ar[r]^{\varphi_\pi} & \, ^LG \ar@{=}[r] & \, ^LG^* }
$$
which we call $\varphi^*$, which in turn should give rise to an $L$-packet $\Pi_{\varphi^*}$ for the group $G^*$. The map $\pi \mapsto \Pi_{\varphi^*}$ would be part of a ``generalized Jacquet-Langlands correspondence''.  However, usually we would not expect $\pi^* \in \Pi_{\varphi^*}$.  For example, if $D/F$ and $G = D^\times$ are as above, $J = \mathcal O^\times_D$, $G^* = {\rm GL}_2$, $J^* = {\rm GL}_2(\mathcal O_F)$, and $\pi = {\bf 1}_{D^\times}$, then $\pi^* = {\bf 1}_{{\rm GL}_2(F)}$, while $\Pi_{\varphi^*} = {\rm JL}(\pi)$ is the Steinberg representation of ${\rm GL}_2(F)$.

On the other hand, if we restrict to $W_F$, we get an agreement of infinitesimal characters $\varphi^*|_{W_F} = \varphi_\pi|_{W_F} = \varphi_{\pi^*}|_{W_F}$. Thus, while $\pi^*$ might sometimes not belong to the $L$-packet $\Pi_{\varphi^*}$, it will always belong to the infinitesimal class $\Pi_{\varphi^*|_{W_F}}$ containing  $\Pi_{\varphi^*}$.

\bigskip

\noindent {\em Acknowledgements}. I thank R.~Kottwitz for a useful conversation about the scope of the results in $\S\ref{fixed-pt_sec}$. Further, I am grateful to Kottwitz and also to X.~He, M.~Solleveld, and X.~Zhu for making some helpful remarks on a preliminary version of this article. I also thank the referee for his/her suggestions and remarks.

\small
\bigskip
\obeylines
\noindent
University of Maryland
Department of Mathematics
College Park, MD 20742-4015 U.S.A.
email: tjh@math.umd.edu

\end{document}